\def\rn{{\mathbb{R}^n}}
\def\zz{{\mathbb Z}}
\def\nn{{\mathbb N}}
\def\cs{{\mathcal S}}
\def\fz{\infty }
\def\az{\alpha}
\def\dz{\delta}
\def\lz{\lambda}
\def\lf{\left}
\def\r{\right}
\def\hs{\hspace{0.25cm}}
\def\ls{\lesssim}
\def\gs{\gtrsim}
\def\noz{\nonumber}
\def\com{\complement}
\def\dis{\displaystyle}
\def\supp{\mathop\mathrm{\,supp\,}}
\def\lpq{{L^{p,q}(\rn)}}
\newtheorem{theorem}{Theorem}[section]
\newtheorem{lemma}[theorem]{Lemma}
\newtheorem{proposition}[theorem]{Proposition}
\theoremstyle{definition}
\newtheorem{remark}[theorem]{Remark}
\newtheorem{definition}[theorem]{Definition}
\renewcommand{\appendix}{\par
   \setcounter{section}{0}%
   \setcounter{subsection}{0}%
   \setcounter{subsubsection}{0}%
   \gdef\thesection{\@Alph\c@section}%
   \gdef\thesubsection{\@Alph\c@section.\@arabic\c@subsection}%
   \gdef\theHsection{\@Alph\c@section.}%
   \gdef\theHsubsection{\@Alph\c@section.\@arabic\c@subsection}%
   \csname appendixmore\endcsname
 }
\numberwithin{equation}{section}
\begin{document}

\arraycolsep=1pt

\title{\bf\Large Littlewood-Paley Characterizations
of Anisotropic Hardy-Lorentz Spaces
\footnotetext{\hspace{-0.35cm} 2010 {\it
Mathematics Subject Classification}. Primary  42B25;
Secondary 42B30, 46E30, 42B35, 30L99.
\endgraf {\it Key words and phrases.} Lorentz space,
anisotropic Hardy-Lorentz space, expansive matrix,
Calder\'{o}n reproducing formula,
Littlewood-Paley function.
\endgraf This project is supported by the National
Natural Science Foundation of China
(Grant Nos.~11571039, 11361020 and 11471042)
and the Specialized Research Fund for the Doctoral Program of Higher Education
of China (Grant No. 20120003110003).}}
\author{Jun Liu, Dachun Yang\,\footnote{Corresponding author}\ \
and Wen Yuan}
\date{}
\maketitle

\vspace{-0.8cm}

\begin{center}
\begin{minipage}{13cm}
{\small {\bf Abstract}\quad
Let $p\in(0,1]$, $q\in(0,\infty]$ and  $A$ be a general
expansive matrix on $\mathbb{R}^n$.
Let $H^{p,q}_A(\mathbb{R}^n)$ be the anisotropic Hardy-Lorentz spaces
associated with $A$ defined via the
non-tangential grand maximal function.
In this article, the authors
characterize $H^{p,q}_A(\mathbb{R}^n)$
in terms of the Lusin-area function,
the Littlewood-Paley $g$-function or the Littlewood-Paley $g_\lambda^*$-function
via first establishing an anisotropic Fefferman-Stein vector-valued
inequality in the Lorentz space $L^{p,q}(\mathbb{R}^n)$.
All these characterizations are
new even for the classical isotropic Hardy-Lorentz spaces on $\mathbb{R}^n$.
Moreover, the range of $\lambda$ in the $g_\lambda^*$-function
characterization of $H^{p,q}_A(\mathbb{R}^n)$ coincides with
the best known one in the classical Hardy space $H^p(\mathbb{R}^n)$ or in the anisotropic
Hardy space $H^p_A(\mathbb{R}^n)$.}
\end{minipage}
\end{center}

\section{Introduction\label{s1}}

\hskip\parindent
It is well known that $H^p(\rn)$ is a good substitute of the Lebesgue
space $L^p(\rn)$ with $p\in(0,1]$, particularly, in the study for the
boundedness of maximal functions and singular integral operators.
Moreover, when studying the boundedness of these operators in the
critical case, the weak Hardy space $H^{p,\fz}(\rn)$ naturally appears
and have been proved to be a good substitute of $H^p(\rn)$.
For example, it is known that, if $T$ is a $\dz$-type Calder\'on-Zygmund
operator with $\dz\in(0,1]$ and $T^*(1)=0$,
where $T^*$ denotes the \emph{adjoint operator} of $T$, then
$T$ is bounded on $H^p(\rn)$ for all
$p\in (\frac{n}{n+\dz},\,1]$ (see, for example, \cite{am86}),
but $T$ is not bounded on $H^{\frac{n}{n+\dz}}(\rn)$;
while Liu \cite{l91} proved that $T$ is
bounded from $H^{\frac{n}{n+\dz}}(\rn)$ to
$WH^{\frac{n}{n+\dz}}(\rn)$.
Liu \cite{l91} also obtained the $\fz$-atomic decomposition
of $WH^p(\rn)$ for all $p\in(0,1]$.
Recall that the weak
Hardy spaces $H^{p,\fz}(\rn)$ with $p\in(0,1)$ were first introduced by
Fefferman, Rivi\'{e}re and Sagher \cite{frs74} in 1974, which naturally appears
as the intermediate spaces of Hardy spaces $H^p_A(\mathbb{R}^n)$ with $p\in(0,1]$ via
the real interpolation. Later on, the weak Hardy space
$H^{1,\fz}(\rn)$ was introduced by Fefferman and Soria \cite{fs87}
to find out the biggest
space from which the Hilbert transform
is bounded to the weak Lebesgue space $L^{1,\fz}(\rn)$,
meanwhile, they also obtained the $\fz$-atomic decomposition of $H^{1,\fz}(\rn)$
and the boundedness of some Calder\'on-Zygmund operators from
$H^{1,\fz}(\rn)$ to $L^{1,\fz}(\rn)$.
In 1994, \'{A}lvarez \cite{a94} studied the Calder\'on-Zygmund
theory related to $H^{p,\fz}(\rn)$ with $p\in(0,1]$.
Nowadays, it is well known that the weak Hardy spaces
$H^{p,\fz}(\rn)$, with $p\in(0,1]$, play
a key role when studying the boundedness of operators in the critical case;
see, for example, \cite{a94,a98,dl03,g92,dls06,w13,dlq07}.
Moreover, the weak Hardy spaces $H^{p,\fz}(\rn)$
are also known as special cases of the Hardy-Lorentz spaces $H^{p,q}(\rn)$
which, when $p=1$ and $q\in(1,\fz)$, were introduced and studied
by Parilov \cite{p05}. For the full
range $p\in(0,1]$ and $q\in(0,\fz]$, the Hardy-Lorentz spaces $H^{p,q}(\rn)$
were investigated by Abu-Shammala and Torchinsky \cite{wa07}, and their $\fz$-atomic
characterizations, real interpolation properties over parameter $q$, and
the boundedness of singular integrals and some other operators on these spaces
were also prensented.
In 2010, Almeida and Caetano \cite{ac10} studied the generalized Hardy spaces,
which include the classical Hardy-Lorentz spaces $H^{p,q}(\rn)$ (see
\cite{wa07}) as special cases, in which, they obtained
some maximal characterizations and real interpolation results
of these generalized Hardy spaces
and, as applications,
they proved the boundedness of some classical operators in this generalized setting.

The Lorentz spaces were originally studied by Lorentz \cite{l50,l51}
in the early 1950's. As a generalization of $L^p(\rn)$, Lorentz spaces
are known as intermediate spaces of Lebesgue spaces
in the real interpolation method; see \cite{c64,lp64,p63}. For a  systematic
treatment of Lorentz spaces and their dual spaces, we refer the reader to
Hunt \cite{h66}, Cwikel \cite{c75} and Cwikel and Fefferman
\cite{cf80,cf84}; see also \cite{bs88,bl76,lg08,s76,sw71}. It is well known that,
due to their fine structures, Lorentz spaces play an irreplaceable role in the study
on various critical or endpoint analysis problems from many different research fields
and there exists a lot of literatures on this subject, here we only mention
several recent papers from harmonic analysis (see, for example,
\cite{osttw12,mtt03,st01,tw01}) and partial differential equations (see, for example,
\cite{iiy15,mr15,p15}).

With the enlightening work of Stein and Weiss \cite{sw60} on systems of conjugate harmonic functions,
higher-dimensional extensions of Hardy spaces naturally appear.
At the same time, a series of characterizations of Hardy spaces were obtained one after another;
see \cite{bgs71,l95,s93}.
Recall that the original work of the Littlewood-Paley theory should be owned
to Littlewood and Paley \cite{lp31}.
Moreover, the Littlewood-Paley theory of Hardy spaces
were investigated by Calder\'{o}n \cite{c65} and Fefferman and Stein \cite{fs72}.
In addition, the Littlewood-Paley theory of other useful function spaces, for example,
various forms of the Lipschitz spaces, the space BMO$(\rn)$ and Sobolev spaces,
has also been well developed, which provides one of the most successful unifying
perspectives on these spaces (see \cite{fjw91}).

On the other hand, from  1970's, there has been an increasing interesting in extending classical
function spaces arising in harmonic analysis from Euclidean spaces to
anisotropic settings and some other domains; see, for example,
\cite{ct75,ct77,fs82,st87,t83,t92}. The study of function spaces on $\rn$
associated with anisotropic dilations dates from these celebrated
articles \cite{c77,ct75,ct77} of Calder\'{o}n and Torchinsky on anisotropic Hardy spaces.
Since then, the theory of  anisotropic function spaces  was well developed
by many authors; see, for example, \cite{fs82,s93,t83}. In 2003, Bownik
\cite{mb03} introduced and investigated the anisotropic Hardy spaces
associated with general expansive matrixes, which were extended to the
weighted setting in \cite{blyz08}. For further efforts of function
spaces and related operators on the anisotropic Euclidean spaces, we refer the reader to
\cite{mb07,bh06,blyz08,dl08,lby14,lbyy12,lbyy14,t06}.

Moreover, the authors \cite{lyy15} introduced the anisotropic Hardy-Lorentz spaces
$H^{p,q}_A(\rn)$ and obtained some characterizations of these spaces;
for example, characterizations in terms of the atoms or the molecules,
the radial or the non-tangential maximal functions or the finite atomic decomposition
and, also considered some interpolation
properties of the anisotropic Hardy-Lorentz spaces
$H^{p,q}_A(\rn)$ via the real method and the boundedness of some classical
operators in this anisotropic setting. To further complete the theory of the anisotropic
Hardy-Lorentz spaces, in this article, we establish
the characterizations of $H^{p,q}_A(\rn)$
via Littlewood-Paley functions including the Lusin-area function,
the Littlewood-Paley $g$-function or the Littlewood-Paley $g_\lambda^*$-function.

To be precise, this article is organized as follows.

In Section \ref{s2}, we first present some basic notions and notation
used in this article, including Lorentz spaces and  their properties
and also some known facts on expansive matrixes in \cite{mb03}.
Then we recall the definitions of the anisotropic Hardy-Lorentz spaces
via non-tangential grand maximal functions
(denoted by $H^{p,q}_A(\rn)$) and their atomic variants
(denoted by $H^{p,r,s,q}_A(\rn)$). Moreover, we
state the main results of this article, namely,
the characterizations of $H^{p,q}_A(\rn)$
in terms of the Lusin-area function (see Theorem \ref{fivet1} below),
the Littlewood-Paley $g$-function (see Theorem \ref{fivet2} below)
or the Littlewood-Paley $g_\lambda^*$-function (see Theorem \ref{fivet3} below).
We point out that all these characterizations are
new even for the classical isotropic Hardy-Lorentz spaces on $\rn$.

In Section \ref{s5}, by the anisotropic Calder\'{o}n reproducing formula and
the method used in the proof of the atomic or the molecular characterizations
of $H^{p,q}_A(\rn)$ (see \cite{lyy15}),
we give out the proof of Theorem \ref{fivet1}.
We point out that,
when we decompose a distribution into a sum of atoms, the dual method used
in estimating
each atom in the classic case does not work any more in the present setting.
Instead,
we use a method from Fefferman \cite{f88} to
obtain a subtle estimate (see \eqref{five27} below).

In Section \ref{s3},
via the above Lusin area function characterizations
(namely, Theorem \ref{fivet1}), we first show Theorem \ref{fivet2}.
To this end, via borrowing some ideas from \cite[Theorem 1]{ycp05},
we establish an anisotropic Fefferman-Stein vector-valued inequality in
Lorentz spaces
$L^{p,q}(\rn)$ (see Lemma \ref{fivel6} below) which plays a key role
in the proof of Theorem \ref{fivet2} and is of independent interest.
Besides this,
we also employ the discrete Calder\'{o}n reproducing formula from
\cite[Lemma 3.2]{lfy14}, which is an anisotropic version of
\cite[Theorem 6.16]{fjw91}, and some auxiliary inequalities (see Lemmas \ref{fivel7}
and \ref{fivel8} below).
The proof of Lemma \ref{fivel8} borrows some ideas from
\cite[Lemma 3.3]{bh06}.
We also point out that the method used in the proof of Theorem \ref{fivet2}
is different from that used by Liang et al. in
the proof of \cite[Theorem 4.4]{lhy12}, in which a subtle pointwise upper estimate
via the vector-valued
Hardy-Littlewood maximal function (see \cite[(4.2)]{lhy12}) plays a key role.
Moreover, motivated by the proof of \cite[Theorem 3.9]{lfy14},
together with using some ideas from Folland and Stein
\cite[Theorem 7.1]{fs82} and Aguilera and Segovia \cite[Theorem 1]{as77}, we further prove
Theorem \ref{fivet3} for all $\lambda\in(2/p,\fz)$ in this section. To this end, we first prove
that the $L^{p,q}(\rn)$ quasi-norm of the variant of the anisotropic Lusin area
function $S_{k_0}(f)$ can be controlled by the $L^{p,q}(\rn)$ quasi-norm of the Lusin area
function $S(f)$ for all $k_0\in\nn$ and $f\in L^{p,q}(\rn)$ (see Lemma \ref{fivel11} below),
which is a key technique used in the proof of Theorem \ref{fivet3}. We point
out that the range of $\lambda$ in Theorem \ref{fivet3} coincides with the 
best known one in the classical Hardy space
$H^p(\rn)$ or in the anisotropic Hardy space $H^p_A(\rn)$.

Finally, we make some conventions on notation. Throughout this article, we always let
$\mathbb{N}:=\{1,2,\ldots\}$ and $\mathbb{Z}_+:=\{0\}\cup\mathbb{N}$.
For any multi-index
$\beta:=(\beta_1,\ldots,\beta_n)\in\mathbb{Z}_+^n$,
let
$|\beta|:=\beta_1+\cdots+\beta_n\ \mathrm{and}\
\partial^\beta:=(\frac\partial{\partial x_1})^{\beta_1}
\cdots(\frac\partial{\partial x_n})^{\beta_n}$.
The symbol $C$ denotes a \emph{positive constant} which is independent of the main
parameters, but its value may change from line to line.
\emph{Constants with subscripts}, such as $C_1$, are the same
in different occurrences. We also use $C_{(\alpha,\beta,\ldots)}$
to denote a positive constant depending on the indicated parameters
$\alpha$, $\beta,\ldots$.
Moreover,
we use $f\ls g$ to denote $f\leq Cg$ and, if $f\ls g\ls f$,
we then write $f\sim g$. For every index $r\in[1,\fz]$, we use $r'$ to
denote its \emph{conjugate index},
namely, $1/r+1/r'=1$. In addition, for any set $F\subset\rn$, we denote
by $F^\complement$ the
set $\rn\setminus F$,
by $\chi_F$ its \emph{characteristic function}, and by $\sharp F$ the cardinality of $F$.
The symbol $\lfloor s\rfloor$, for any $s\in\mathbb{R}$, denotes the maximal integer
not larger than $s$.

\section{Main results \label{s2}}

\hskip\parindent
In this section, we recall the notion of the anisotropic Hardy-Lorentz space
defined in \cite{lyy15}, and then state our main results.

We begin with the definition of Lorentz spaces. Let $p\in(0,\fz)$
and $q\in(0,\fz]$. The \emph{Lorentz space} $L^{p,q}(\rn)$
is defined to be the set of all measurable functions $f$ satisfying that
$\|f\|_{L^{p,q}(\rn)}<\fz$, where the quasi-norm
\begin{align*}
\|f\|_{L^{p,q}(\rn)}:=\left\{
\begin{array}{cl}
&\lf\{\dis\frac{q}{p}\dis\int_0^\fz\lf
[t^{\frac{1}{p}}{f^\ast(t)}\r]
^q\frac{dt}{t}\r\}^{\frac{1}{q}}
\ \ \ {\rm when}\ \ \ q\in(0,\fz),\\
&\dis\sup_{t\in(0,\fz)}\lf[t^
\frac{1}{p}f^\ast(t)\r]\ \ \ \ \ \ \
\ \ \ \ \ \ \ \,{\rm when}\ \ \ q=\infty,
\end{array}\r.
\end{align*}
and $f^\ast$ denotes the \emph{non-increasing rearrangement}
of $f$, namely,
\begin{align*}
f^\ast(t):=\{\alpha\in(0,\fz):\
d_f(\alpha)\leq t\}, \quad t\in(0,\fz).
\end{align*}
Here and hereafter, for any $\az\in(0,\fz)$,
$d_f(\alpha):=|\{x\in\rn:\ |f(x)|>\alpha\}|$.
It is well known that, if $q\in(0,\fz)$,
\begin{equation}\label{se6}
\|f\|_{L^{p,q}(\rn)}
\sim\lf\{\int_0^\fz\alpha^{q-1}\lf[d_f(\alpha)\r]
^{\frac{q}{p}}\,d\az\r\}^{\frac{1}{q}}
\sim \lf\{\sum_{k\in\mathbb{Z}}\lf[2^k
\lf\{d_f(2^k)\r\}^{\frac{1}{p}}\r]^q\r\}^{\frac{1}{q}}
\end{equation}
and
\begin{equation}\label{se7}
\|f\|_{L^{p,\infty}(\rn)}
\sim\sup_{\alpha\in(0,\fz)}
\lf\{\alpha\lf[d_f(\alpha)\r]^{\frac1p}\r\}
\sim\sup_{k\in\mathbb{Z}}
\lf\{2^k\lf[d_f(2^k)\r]^{\frac{1}{p}}\r\};
\end{equation}
see \cite{lg08}. By
\cite[Remark\ 1.4.7]{lg08}, for any $p,\,r\in(0,\fz)$, $q\in(0,\fz]$ and all measurable
functions $g$, we know that
\begin{align}\label{se22}
\lf\||g|^r\r\|_{L^{p,q}(\rn)}
=\lf\|g\r\|_{L^{pr,qr}(\rn)}^r.
\end{align}

Now let us recall the notion of
expansive matrices in \cite{mb03}.

\begin{definition}\label{d-em}
An $n\times n$ real matrix $A$
is called an \emph{expansive matrix}
(for short, a \emph{dilation})
if $\min_{\lz\in\sigma(A)}|\lz|>1$,
here and hereafter, $\sigma(A)$ denotes the set of
all eigenvalues of $A$.
\end{definition}

Throughout this article,
$A$ always denotes a fixed dilation
and $b:=|\det A|$. By \cite[p.\,6, (2.7)]{mb03},
we have $b\in(1,\fz)$.
Let $\lambda_-$ and $\lambda_+$
be two \emph{positive numbers} satisfying that
$$1<\lambda_-<\min\{|\lambda|:\
\lambda\in\sigma(A)\}
\leq\max\{|\lambda|:\
\lambda\in\sigma(A)\}<\lambda_+.$$
In the case when $A$ is diagonalizable over
$\mathbb{C}$, we can even take
$\lambda_-:=\min\{|\lambda|:\
\lambda\in\sigma(A)\}$
and
$\lambda_+:=\max\{|\lambda|:\
\lambda\in\sigma(A)\}$.
Otherwise, we need to choose them
sufficiently close to these equalities
according to what we need in our arguments.
Then, by \cite[p.\,5 (2.1) and (2.2)]{mb03},
there exists a positive constant
$C$, independent of $x$ and $j$, such that, for all $x\in\rn$,
when $j\in\mathbb{Z}_+$,
\begin{align*}
C^{-1}\lf(\lambda_-\r)^j|x|\leq|A^jx|
\leq C\lf(\lambda_+\r)^j|x|
\end{align*}
and, when $j\in\mathbb{Z}\setminus\mathbb{Z}_+$,
\begin{align}\label{se19}
C^{-1}\lf(\lambda_+\r)^j|x|\leq|A^jx|
\leq C\lf(\lambda_-\r)^j|x|.
\end{align}

It was proved in \cite[p.\,5, Lemma 2.2]{mb03} that,
for a given dilation $A$, there exists an open
ellipsoid $\Delta$ and $r\in(1,\infty)$ such that
$\Delta\subset r\Delta\subset A\Delta$, and one
can additionally assume that $|\Delta|=1$,
where $|\Delta|$ denotes the \emph{n-}dimensional
Lebesgue measure of the set $\Delta$.
Let $B_k:=A^k\Delta$ for all $k\in\zz$.
An ellipsoid $x+B_k$ for some $x\in\rn$ and $k\in\mathbb{Z}$
is called a \emph{dilated ball}. Let $\mathfrak{B}$ be the set of all such
dilated balls, namely,
\begin{align}\label{se14}
\mathfrak{B}:=\lf\{x+B_k:\ x\in\rn,\ k\in\mathbb{Z}\r\}.
\end{align}
Then $B_k$ is open,
$B_k\subset rB_k\subset B_{k+1}$
and $|B_k|=b^k$.
Throughout this article, let $\tau$ be
the \emph{minimal integer} such that
$r^\tau\geq2$. Then, for all $k\in\mathbb{Z}$,
it holds true that
\begin{align}\label{se1}
B_k+B_k\subset B_{k+\tau},
\end{align}
\begin{align}\label{se2}
B_k+(B_{k+\tau})^\complement
\subset (B_k)^\complement,
\end{align}
where $E+F$ denotes the algebraic sum
$\{x+y:\ x\in E,\,y\in F\}$ of sets
$E,\,F\subset\mathbb{R^\emph{n}}$.

The notion of the homogeneous quasi-norm
induced by $A$ was introduced in
\cite[p.\,6, Definition 2.3]{mb03} as follows.

\begin{definition}\label{sd2}
A \emph{homogeneous quasi-norm} associated
with a dilation $A$ is a measurable mapping
$\rho:\ \rn \rightarrow [0,\infty]$ satisfying that

(i) $\rho(x)=0 \Longleftrightarrow x=\vec0_n$,
here and hereafter, $\vec0_n:=(0,\ldots,0)\in\rn$;

(ii) $\rho(Ax)=b\rho(x)$ for all $x\in\rn$;

(iii) $\rho(x+y)\leq H[\rho(x)+\rho(y)]$
for all $x,\,y\in\rn$, where $H\in[1,\fz)$ is a constant
independent of $x$ and $y$.
 \end{definition}

In the standard dyadic case
$A:=2{\rm I}_{n\times n}$, $\rho(x):=|x|^n$
for all $x\in\rn$ is an example of the
homogeneous quasi-norm associated
with $A$, here and hereafter,
${\rm I}_{n\times n}$ denotes the $n\times n$
\emph{unit matrix} and $|\cdot|$ denotes the
Euclidean norm in $\rn$. It was proved in
\cite[p.\,6, Lemma 2.4]{mb03} that all homogeneous
quasi-norms associated with $A$ are equivalent.
Therefore, for a given dilation $A$, in what follows,
we always use the
\emph{step homogeneous quasi-norm} $\rho$
defined by setting, for all $x\in\rn$,
\begin{equation*}
\rho(x):=\left\{
\begin{array}{cl}
b^j&\ \ \ \ {\rm when}\ \ \ x\in
B_{j+1}\backslash B_j,\\
0&\ \ \ \ {\rm when}\ \ \ x=0
\end{array}\r.
\end{equation*}
for convenience.
Obviously, for all $k\in\zz$,
$B_k=\{x\in\rn:\ \rho(x)<b^k\}$.
Moreover, $(\rn,\,\rho,\,dx)$ is a space
of homogeneous type in the sense of
Coifman and Weiss \cite{cw71,cw77}, here and hereafter,
$dx$ denotes the \emph{n-}dimensional
Lebesgue measure.

Denote the space of all Schwartz functions by
$\cs(\rn)$, namely, the set of all $C^\infty(\rn)$
functions $\varphi$ satisfying that, for every integer $\ell\in\zz_+$ and
multi-index $\alpha$,
$$\|\varphi\|_{\alpha,\ell}:=
\sup_{x\in\rn}[\rho(x)]^\ell
\lf|\partial^\alpha\varphi(x)\r|<\infty.$$
The dual space of $\cs(\rn)$, namely, the space of all
tempered distributions on $\rn$ equipped with the weak-$\ast$ topology, is denoted by
$\cs'(\rn)$. For any $N\in\mathbb{Z}_+$, let
$$\cs_N(\rn):=\{\varphi\in\cs(\rn):\
\|\varphi\|_{\alpha,\ell}\leq1,\
|\alpha|\leq N\ ,\ \ell\leq N\};$$
equivalently,
\begin{align*}
\ \ \ \varphi\in\cs_N(\rn)\Longleftrightarrow
\|\varphi\|_{\cs_N(\rn)}:=\sup_{|\alpha|\leq N}
\sup_{x\in\rn}\lf[\lf|\partial^\alpha
\varphi(x)\r|\max\lf\{1,\lf[
\rho(x)\r]^N\r\}\r]\leq1.
\end{align*}
Throughout this article, for
$\varphi\in\cs(\rn)$ and $k\in\mathbb{Z}$, let
$\varphi_k(\cdot):=b^{-k}\varphi(A^{-k}\cdot)$.

\begin{definition}\label{d-mf}
Let $\varphi\in\cs(\rn)$ and $f\in\cs'(\rn)$. The
\emph{non-tangential maximal function} $M_\varphi(f)$ of
$f$ with respect to $\varphi$ is defined as
\begin{align*}
M_\varphi(f)(x):= \sup_{y\in x+B_k,
k\in\mathbb{Z}}|f\ast\varphi_k(y)|,
\ \ \ \ \ \forall\ x\in\rn.
\end{align*}
Moreover, for $N\in\mathbb{N}$, the
\emph{non-tangential grand maximal function} $M_N(f)$
of $f$ is defined as
\begin{equation}\label{se8}
M_N(f)(x):=\sup_{\varphi\in\cs_N(\rn)}
M_\varphi(f)(x),\ \ \ \ \ \forall\ x\in\rn.
\end{equation}
\end{definition}

The following Proposition \ref{tl4}
is just \cite[p.\,13, Theorem 3.6]{mb03}.

\begin{proposition}\label{tl4}
For any $s\in(1,\fz)$, let
$$\mathcal{F}:=
\lf\{\varphi\in L^\infty(\rn):\ |\varphi(x)|
\leq[1+\rho(x)]^{-s},\ x\in\rn\r\}.$$
For $p\in[1,\fz]$ and $f\in L^p(\rn)$,
the maximal function associated with $\mathcal{F}$,
$M_{\mathcal{F}}$, is defined as
\begin{align*}
M_{\mathcal{F}}(f)(x):=
\sup_{\varphi\in\mathcal{F}}M_\varphi(f)(x),
\ \ \ \ \ \ \forall\ x\in\rn.
\end{align*}
Then there exists a positive constant
$C_{(s)}$, depending on $s$, such that,
for all $\lz\in(0,\fz)$ and $f\in L^1(\rn)$,
\begin{align}\label{te56}
\lf|\lf\{x\in\rn:\ M_{\mathcal{F}}(f)(x)>\lambda\r\}\r|
\leq C_{(s)}\|f\|_{L^1(\rn)}/\lambda
\end{align}
and, for all $p\in(1,\fz]$, there exists a positive constant
$C_{(s,p)}$, depending on $s$ and $p$, such that,
for all $f\in L^p(\rn)$,
\begin{align}\label{te57}
\|M_{\mathcal{F}}(f)\|_{L^p(\rn)}
\leq C_{(s,p)}\|f\|_{L^p(\rn)}.
\end{align}
\end{proposition}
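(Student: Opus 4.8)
This proposition is quoted directly from \cite[p.\,13, Theorem 3.6]{mb03}, so strictly speaking one only needs to invoke it; nonetheless, the natural self-contained argument is an anisotropic Fefferman--Stein type majorization. The plan is to reduce both \eqref{te56} and \eqref{te57} to the corresponding (classical) weak $(1,1)$ and strong $(p,p)$, $p\in(1,\fz]$, bounds for the Hardy--Littlewood maximal function
\begin{align*}
M_{\mathrm{HL}}(f)(x):=\sup_{k\in\zz}\frac1{|x+B_k|}\int_{x+B_k}|f(z)|\,dz,\quad x\in\rn,
\end{align*}
on the space of homogeneous type $(\rn,\rho,dx)$ of Coifman and Weiss \cite{cw71,cw77}. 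The whole proof then hinges on the pointwise estimate $M_{\mathcal{F}}(f)(x)\ls M_{\mathrm{HL}}(f)(x)$ for all $x\in\rn$, with implicit constant depending only on $s$ (together with the structural constants $b$ and $\tau$).

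To establish this pointwise bound, fix $\varphi\in\mathcal{F}$, $k\in\zz$ and $y\in x+B_k$. Since $\varphi_k(y-z)=b^{-k}\varphi(A^{-k}(y-z))$, since $\rho(A^{-k}w)=b^{-k}\rho(w)$ by (ii) of Definition \ref{sd2}, and since $|\varphi|\le[1+\rho(\cdot)]^{-s}$, one has
\begin{align*}
|f\ast\varphi_k(y)|\le b^{-k}\int_\rn|f(z)|\lf[1+b^{-k}\rho(y-z)\r]^{-s}\,dz.
\end{align*}
I would then split $\rn$ into the ``central'' region $\{z:\ \rho(y-z)<b^k\}\subset y+B_k$ and the dyadic annuli $\{z:\ b^{k+j}\le\rho(y-z)<b^{k+j+1}\}\subset y+B_{k+j+1}$, $j\in\zz_+$. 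On the central region the kernel is at most $1$, and on the $j$-th annulus it is at most $b^{-js}$. Using $y\in x+B_k$ together with \eqref{se1} to replace each dilated ball centred at $y$ by one centred at $x$ (which enlarges the index by the fixed amount $\tau$), and using $|B_m|=b^m$, each of these pieces is dominated by $C\,b^{j(1-s)}M_{\mathrm{HL}}(f)(x)$, with $j=0$ for the central region. Since $s>1$ forces $b^{1-s}\in(0,1)$, summing the resulting geometric series over $j\in\zz_+$ gives $|f\ast\varphi_k(y)|\ls M_{\mathrm{HL}}(f)(x)$, and taking the supremum over $\varphi\in\mathcal{F}$, $y\in x+B_k$ and $k\in\zz$ yields the claimed domination. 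With this in hand, \eqref{te56} follows from the weak-type $(1,1)$ bound for $M_{\mathrm{HL}}$ and \eqref{te57} from its $L^p(\rn)$ boundedness for $p\in(1,\fz]$.

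The computation is essentially routine; the one place demanding care is the annular step, namely passing from an average over a dilated ball centred at the test point $y\in x+B_k$ to an average over a dilated ball centred at $x$ while keeping the resulting constant independent of both $k$ and $j$ --- this is precisely where the inclusions \eqref{se1} (hence the parameter $\tau$) and the scaling $|B_m|=b^m$ enter. I also note that the hypothesis $s\in(1,\fz)$ is used in exactly two spots: it makes the geometric series in $b^{1-s}$ converge, and it guarantees that $[1+\rho(\cdot)]^{-s}\in L^1(\rn)$, so that $f\ast\varphi_k$ is well defined for every $f\in L^p(\rn)$ with $p\in[1,\fz]$.
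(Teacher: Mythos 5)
Your argument is correct, but it is worth noting that this is not a new route: the paper itself gives no proof, stating only that Proposition~\ref{tl4} is \cite[p.\,13, Theorem 3.6]{mb03}, and the self-contained argument you supply is essentially a transcription of Bownik's proof of that theorem. The key pointwise majorization $M_{\mathcal{F}}(f)(x)\ls M_{{\rm HL}}(f)(x)$, obtained by writing $|f\ast\varphi_k(y)|\le b^{-k}\int_\rn|f(z)|[1+b^{-k}\rho(y-z)]^{-s}\,dz$, splitting $\rn$ into the dilated ball $y+B_k$ and the annuli $y+(B_{k+j+1}\setminus B_{k+j})$, using the decay $[1+b^{-k}\rho(y-z)]^{-s}\le b^{-js}$ on the $j$-th annulus, and passing from balls centred at $y\in x+B_k$ to balls centred at $x$ via \eqref{se1} at the cost of a bounded factor $b^\tau$, is exactly the expected estimate; the geometric series $\sum_{j\ge0}b^{j(1-s)}$ converges precisely because $s\in(1,\fz)$, as you observe. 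Two small remarks: you work with the centred variant of $M_{{\rm HL}}$ rather than the non-tangential one defined in \eqref{te58}, but since the former is pointwise dominated by the latter this is harmless; and the paper's own Remark~\ref{tr1} already records that the anisotropic $M_{{\rm HL}}$ satisfies \eqref{te56} and \eqref{te57}, which is precisely the fact to which your reduction appeals.
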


\begin{remark}\label{tr1}
Obviously, by Proposition \ref{tl4}, we know that the non-tangential grand
maximal function $M_N(f)$ in \eqref{se8} and the
Hardy-Littlewood maximal function $M_{{\rm HL}}(f)$, defined by setting,
for all $f\in L^1_{{\rm loc}}(\rn)$ and $x\in\rn$,
\begin{align}\label{te58}
M_{{\rm HL}}(f)(x):=\sup_{k\in\mathbb{Z}}
\sup_{y\in x+B_k}\frac1{|B_k|}
\int_{y+B_k}|f(z)|\,dz=\sup_{x\in B\in\mathfrak{B}}
\frac1{|B|}\int_B|f(z)|\,dz,
\end{align}
where $\mathfrak{B}$ is as in \eqref{se14}, satisfy \eqref{te56} and \eqref{te57}.
\end{remark}

The following anisotropic Hardy-Lorentz space and
its atomic variant were
introduced in \cite{lyy15}.

\begin{definition}\label{d-ahls}
Let $p\in(0,\fz),\,q\in(0,\fz]$ and
\begin{align*}
N_{(p)}:=\left\{
\begin{array}{rl}
&\lf\lfloor\lf(\dfrac1p-1\r)\dfrac{\ln b}{\ln
\lambda_-}\r\rfloor+2\hspace{1cm} {\rm when}\ p\in(0,1],\\
&2\hspace{4.44cm} {\rm when}\ p\in(1,\fz).
\end{array}\r.
\end{align*}
 For any $N\in\nn\cap(N_{(p)},\fz)$, the
\emph{anisotropic Hardy-Lorentz space} $H^{p,q}_A(\rn)$ is defined by
\begin{equation*}
H_A^{p,q}(\rn)
:=\lf\{f\in\cs'(\rn):\ M_N(f)\in\lpq\r\}
\end{equation*}
and, for any $f\in H^{p,q}_A(\rn)$, let
$\|f\|_{H^{p,q}_A(\rn)}
:=\| M_N(f)\|_{L^{p,q}(\rn)}$.
\end{definition}

\begin{definition}\label{d-ats}
(i)
An anisotropic triplet $(p,r,s)$ is said to be \emph{admissible}
if $p\in(0,1]$, $r\in(1,\fz]$ and $s\in\mathbb{N}$ with
$s\geq\lfloor(1/p-1)\ln b/\ln\lambda_-\rfloor$.
For an admissible anisotropic triplet $(p,r,s)$, a measurable
function $a$ on $\rn$ is called an \emph{anisotropic $(p,r,s)$-atom}
if
\begin{enumerate}
\item[(a)] ${\rm \supp}a \subset B,\ {\rm where}\
B\in\mathfrak{B}$ and $\mathfrak{B}$ is as in \eqref{se14};

\item[(b)] $\|a\|_{L^r(\rn)}\leq |B|^{1/r-1/p}$;

\item[(c)] $\int_\rn a(x)x^\alpha\,dx =0$ for any
$\alpha\in\zz_+^n$ with $|\alpha|\leq s$.
\end{enumerate}

(ii) For an anisotropic triplet $(p,r,s)$,
$q\in(0,\fz]$ and a dilation
$A$, the \emph{anisotropic atomic Hardy-Lorentz space}
$H_A^{p,r,s,q}(\rn)$ is defined to be the set of all distributions
$f\in\cs'(\rn)$ satisfying that there exist a sequence
$\{a_i^k\}_{i\in\mathbb{N},\,k\in\mathbb{Z}}$ of $(p,r,s)$-atoms,
respectively, supported on
$\{x_i^k+B_i^k\}_{i\in\mathbb{N},\,k\in\mathbb{Z}}\subset\mathfrak{B}$,
and a positive constant $\widetilde{C}$ such that
$\sum_{i\in\mathbb{N}}\chi_{x_i^k+B_i^k}(x)\leq \widetilde{C}$
for all $x\in\rn$ and $k\in\mathbb{Z}$, and
$f=\sum_{k\in\mathbb{Z}}\sum_{i\in\mathbb{N}}\lambda_i^ka_i^k$
in $\cs'(\rn)$, where $\lambda_i^k\sim2^k|B_i^k|^{1/p}$ for all
$k\in\mathbb{Z}$ and $i\in\mathbb{N}$ with the implicit equivalent
positive constants independent of $k$ and $i$.

Moreover, for all $f\in H_A^{p,r,s,q}(\rn)$, define
\begin{align*}
\|f\|_{H_A^{p,r,s,q}(\rn)}:=
{\rm inf}\lf\{\lf[\sum_{k\in\zz}\lf(\sum_{i\in\nn}|\lambda_i^k|^p\r)^
{\frac qp}\r]^{\frac 1q}:\ f=\sum_{k\in\mathbb{Z}}
\sum_{i\in\mathbb{N}}\lambda_i^ka_i^k\r\}
\end{align*}
with the usual modification made when $q=\fz$, where the
infimum is taken over all decompositions of $f$ as above.
\end{definition}

The following decomposition characterizations of $H_A^{p,q}(\rn)$
is just \cite[Theorem 3.6]{lyy15}.
\begin{lemma}\label{sl1}
Let $(p,r,s)$ be an admissible anisotropic triplet as in
Definition \ref{d-ats}(i), $q\in(0,\fz]$ and $N\in\nn\cap(N_{(p)},\fz)$. Then
$H^{p,q}_A(\rn)=H_A^{p,r,s,q}(\rn)$ with equivalent quasi-norms.
\end{lemma}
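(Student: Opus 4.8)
Lemma \ref{sl1} is \cite[Theorem 3.6]{lyy15}, and the plan is to prove the two inclusions $H_A^{p,r,s,q}(\rn)\subseteq H^{p,q}_A(\rn)$ and $H^{p,q}_A(\rn)\subseteq H_A^{p,r,s,q}(\rn)$, together with the matching quasi-norm inequalities; since the second inclusion will actually produce $(p,\fz,s)$-atoms (which are $(p,r,s)$-atoms for every admissible $r$) and the first handles arbitrary $(p,r,s)$-atoms, the two estimates together also show that $H_A^{p,r,s,q}(\rn)$ is independent of the admissible triplet.

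For the inclusion $H_A^{p,r,s,q}(\rn)\subseteq H^{p,q}_A(\rn)$, I would first record the standard pointwise control of the grand maximal function of a single atom: if $a$ is a $(p,r,s)$-atom supported on a dilated ball $B=x_B+B_\ell$, then, applying \eqref{te57} of Proposition \ref{tl4} on a fixed dilate $A^{j_0}B$ of $B$ and using the vanishing moments of Definition \ref{d-ats}(i)(c) together with a Taylor expansion and the decay estimates \eqref{se19} outside $A^{j_0}B$, one obtains, for a suitable $j_0\in\nn$ depending on $A$, $p$, $s$, both $\|M_N(a)\|_{L^r(\rn)}\ls|B|^{1/r-1/p}$ and a controlled tail $\|M_N(a)\chi_{(A^{j_0}B)^\complement}\|_{L^1(\rn)}\ls1$. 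Then, given $f=\sum_{k\in\zz}\sum_{i\in\nn}\lambda_i^ka_i^k$ with $\lambda_i^k\sim2^k|B_i^k|^{1/p}$ and $\sum_{i\in\nn}\chi_{B_i^k}\le\widetilde C$ for every $k$, I would fix $\alpha\in(0,\fz)$, choose $j\in\zz$ with $2^j\le\alpha<2^{j+1}$, split $f$ into its low part $\sum_{k\le j}(\cdots)$ and its high part $\sum_{k>j}(\cdots)$, bound the distribution function of $M_N$ of the high part by the $L^r$ (equivalently an $L^{p_0}$, $p_0\in(1,\fz)$, via Remark \ref{tr1}) estimate together with the $p$-subadditivity of the $L^p$ quasi-norm, and of the low part by the bounded overlap of the dilated balls (so that $|\bigcup_i A^{j_0}B_i^k|\ls|\bigcup_i B_i^k|$), and finally sum the resulting estimate over $j\in\zz$ by means of the discretizations \eqref{se6} and \eqref{se7} of $\|\cdot\|_{\lpq}$. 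This yields $\|f\|_{H^{p,q}_A(\rn)}\ls\|f\|_{H_A^{p,r,s,q}(\rn)}$.

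For the reverse inclusion I would run an anisotropic Calder\'on--Zygmund decomposition at the dyadic heights of $M_N(f)$. For each $k\in\zz$ set $\Omega_k:=\{x\in\rn:\ M_N(f)(x)>2^k\}$, which is open with $|\Omega_k|=d_{M_N(f)}(2^k)<\fz$ whenever $f\in H^{p,q}_A(\rn)$; cover $\Omega_k$ by dilated balls $x_i^k+B_i^k$ of controlled overlap (a Whitney-type covering), pick a subordinate smooth partition of unity, and subtract on each such ball the unique polynomial of degree at most $s$ that kills the required moments, obtaining $f=g^k+\sum_{i\in\nn}b_i^k$ in $\cs'(\rn)$ with $g^k\to f$ in $\cs'(\rn)$ as $k\to\fz$ and $g^k\to0$ in $\cs'(\rn)$ as $k\to-\fz$. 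Telescoping $f=\sum_{k\in\zz}(g^{k+1}-g^k)$ in $\cs'(\rn)$ and regrouping the pieces of $g^{k+1}-g^k$, one gets functions $h_i^k$, supported on dilated balls comparable to $x_i^k+B_i^k$, with vanishing moments up to order $s$ and $\|h_i^k\|_{L^\fz(\rn)}\ls2^k$, so that, with a large enough constant $c$, each $a_i^k:=c^{-1}2^{-k}|x_i^k+B_i^k|^{-1/p}h_i^k$ is a $(p,\fz,s)$-atom and $\lambda_i^k:=c\,2^k|x_i^k+B_i^k|^{1/p}$. Since these balls cover $\Omega_k$ with bounded overlap, $\sum_{i\in\nn}|\lambda_i^k|^p\sim2^{kp}\sum_{i\in\nn}|x_i^k+B_i^k|\sim2^{kp}|\Omega_k|=2^{kp}d_{M_N(f)}(2^k)$, whence, by \eqref{se6} (and by \eqref{se7} when $q=\fz$),
\[
\sum_{k\in\zz}\lf(\sum_{i\in\nn}|\lambda_i^k|^p\r)^{q/p}\sim\sum_{k\in\zz}\lf[2^k\lf\{d_{M_N(f)}(2^k)\r\}^{1/p}\r]^q\sim\|M_N(f)\|_{\lpq}^q,
\]
which gives $\|f\|_{H_A^{p,r,s,q}(\rn)}\ls\|f\|_{H^{p,q}_A(\rn)}$ and finishes the proof.

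The technical heart, and the main obstacle, is the Calder\'on--Zygmund decomposition itself: one must verify that the bad parts $h_i^k$ genuinely obey the uniform bound $\|h_i^k\|_{L^\fz(\rn)}\ls2^k$ and the moment conditions, and that the regrouped double series converges to $f$ in $\cs'(\rn)$ — these are the anisotropic analogues of the arguments in \cite{mb03}. Once that is in place, the passage from the usual $H^p$-type bookkeeping to the Lorentz quasi-norm is an essentially mechanical application of the discretizations \eqref{se6} and \eqref{se7}, since in both directions everything reduces to controlling $\sum_{k\in\zz}[2^k\{|\Omega_k|\}^{1/p}]^q$ by (a constant times) $\|M_N(f)\|_{\lpq}^q$.
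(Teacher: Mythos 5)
This lemma is not proved in the present paper: the authors state it as a verbatim citation of \cite[Theorem 3.6]{lyy15} (``The following decomposition characterizations of $H_A^{p,q}(\rn)$ is just \cite[Theorem 3.6]{lyy15}''), so there is no internal proof to compare your attempt against. That said, your sketch is the expected one: the reverse inclusion via an anisotropic Calder\'on--Zygmund decomposition at the dyadic heights $2^k$ of $M_N(f)$, following \cite{mb03}, producing $(p,\fz,s)$-atoms with bounded overlap and $\sum_i|\lambda_i^k|^p\sim 2^{kp}|\Omega_k|$, and the forward inclusion via the standard single-atom maximal estimates plus a split of the series at the level $k\sim\log_2\alpha$ with the Lorentz quasi-norm recovered through \eqref{se6}--\eqref{se7}. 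These are precisely the ingredients used in the paper's own proof of Theorem \ref{fivet1}, which is the (harder) Lusin-area analogue of this lemma, so your plan is consistent with the authors' toolkit.

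Two points to sharpen. First, the discretized summation in the forward inclusion is exactly what Lemma \ref{tl3} (i.e.\ \cite[Lemma 1.2]{wa07}) is designed to do; the paper invokes it at the corresponding step of Theorem \ref{fivet1}, and you should lean on it rather than reprove the bookkeeping — in particular, Lemma \ref{tl3} requires an auxiliary exponent $\delta\in(0,\min\{1,q/p\})$ with matching powers, which does not emerge automatically from the na\"ive ``$L^r$ near, $p$-subadditivity far'' split you describe. Second, your assignment of tools to the two halves is actually flipped relative to what the estimates deliver naturally: in the paper's Theorem \ref{fivet1} argument, the low part $\sum_{k<k_0}$ is handled by the $L^r$ boundedness together with H\"older's inequality in $k$ (the geometric series in $2^{k\sigma r'}$), while the high part $\sum_{k\ge k_0}$ is controlled by the $L^{\delta p}$ estimate obtained from the off-ball decay; your description puts the $L^r$/$p$-subadditivity machinery on the high part and only the bounded overlap on the low part, which by itself does not close (a raw union bound $\sum_{k\le j}\sum_i|B_i^k|$ is not controlled by $\alpha^{-p}\|f\|_{H_A^{p,r,s,q}}^p$ without the extra exponent $\delta$). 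Adjusting the split to match the structure of Lemma \ref{tl3} will make the argument go through cleanly; the CZ half of your sketch is fine as stated.
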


\begin{remark}\label{sr1}
From Lemma \ref{sl1}, it follows that the space $H^{p,q}_A(\rn)$
is independent of the choice of $N$ as long as
$N\in\nn\cap(N_{(p)},\fz)$.
\end{remark}

\begin{definition}\label{fived1}
Suppose that $p\in(0,1]$ and $\varphi\in\cs(\rn)$ satisfies
$\int_{\rn}\varphi(x)x^\alpha\,dx=0$
for all multi-indices $\alpha\in\zz_+^n$ with
$|\alpha|\leq s$, where $s\in\mathbb{N}$ and
$s\geq\lfloor(1/p-1)\ln b/\ln\lambda_-\rfloor$. For all $f\in\cs'(\rn)$ and
$\lambda\in(0,\fz)$, the \emph{anisotropic Lusin area function} $S(f)$,
the \emph{Littlewood-Paley} $g$-\emph{function} $g(f)$ and
the \emph{Littlewood-Paley} $g_\lambda^\ast$-\emph{function} $g_\lambda^\ast(f)$
are defined, respectively, by setting, for all $x\in\rn$,

\begin{align}\label{five1}
S(f)(x):=\lf[\sum_{k\in\mathbb{Z}}b^{-k}\int_{x+B_k}
\lf|f\ast\varphi_{k}(y)\r|^2\,dy\r]^{1/2},
\end{align}
\begin{align*}
g(f)(x):=\lf[\sum_{k\in\mathbb{Z}}
\lf|f\ast\varphi_{k}(x)\r|^2\r]^{1/2}
\end{align*}
and
\begin{align*}
g_\lambda^\ast(f)(x):=
\lf\{\sum_{k\in\mathbb{Z}}b^{-k}\int_{\rn}
\lf[\frac{b^k}{b^k+\rho(x-y)}\r]^\lambda
\lf|f\ast\varphi_{k}(y)\r|^2\,dy\r\}^{1/2}.
\end{align*}
\end{definition}

Recall that a distribution $f\in\cs'(\rn)$ is said to
\emph{vanish weakly at infinity} if, for any $\psi\in\cs(\rn)$,
$f\ast\psi_{k}\rightarrow0$ in $\cs'(\rn)$ as $k\rightarrow\fz$.
Denote by $\cs'_0(\rn)$ the set of all $f\in\cs'(\rn)$
vanishing weakly at infinity.

The following Theorems \ref{fivet1} through \ref{fivet3} are
the main results of this article, which characterize the space $H_A^{p,q}(\rn)$,
respectively, in terms of
the Lusin area function, the Littlewood-Paley $g$-function and
the Littlewood-Paley $g_\lambda^\ast$-function.

\begin{theorem}\label{fivet1}
Let $p\in(0,1]$ and $q\in(0,\fz]$.
Then $f\in H_A^{p,q}(\rn)$ if and only if $f\in\cs'_0(\rn)$
and $S(f)\in L^{p,q}(\rn)$.
Moreover, there exists a positive constant $C_1$ such that,
for all $f\in H_A^{p,q}(\rn)$,
$$\frac1{C_1}\lf\|S(f)\r\|_{L^{p,q}(\rn)}\leq\|f\|_{H_A^{p,q}(\rn)}
\leq C_1\lf\|S(f)\r\|_{L^{p,q}(\rn)}.$$
\end{theorem}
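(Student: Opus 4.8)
The plan is to prove the two inequalities separately, both via the atomic/molecular machinery already available for $H^{p,q}_A(\rn)$ (Lemma \ref{sl1}) combined with the anisotropic Calder\'on reproducing formula. First I would establish the ``easy'' direction $\|S(f)\|_{\lpq}\ls\|f\|_{H^{p,q}_A(\rn)}$. By Lemma \ref{sl1} it suffices to take an atomic decomposition $f=\sum_{k\in\zz}\sum_{i\in\nn}\lz_i^ka_i^k$ with $\lz_i^k\sim2^k|B_i^k|^{1/p}$ and the bounded-overlap property, and control $\|S(f)\|_{\lpq}$ in terms of $\{\lz_i^k\}$. For each dyadic level $k$ one splits $S(f)\le\sum_k S\bigl(\sum_i\lz_i^ka_i^k\bigr)$, and for a single atom $a$ supported in $x_0+B_{j_0}$ one needs the pointwise decay estimate: $S(a)(x)\ls|B_{j_0}|^{-1/p}$ for $x$ near the atom (using $L^2$-boundedness of $S$, which follows from the Plancherel/Calder\'on argument and the moment conditions on $\vz$) and $S(a)(x)\ls|B_{j_0}|^{s_0}[\rho(x-x_0)]^{-s_0-1}$-type decay away from it, where the exponent $s_0$ is governed by the vanishing moments of both $a$ and $\vz$. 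The genuinely delicate point here is the one flagged in the introduction: the classical ``dual'' estimate on each atom breaks down in the Lorentz (and anisotropic) setting, so instead one invokes the method of Fefferman \cite{f88} to produce the subtle estimate \eqref{five27}; summing over $i$ using the bounded overlap, and then over $k$ using the discrete characterizations \eqref{se6}, \eqref{se7} of the $\lpq$ quasi-norm, yields the desired bound.

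For the converse direction, suppose $f\in\cs'_0(\rn)$ with $S(f)\in\lpq$. The strategy is to run the Calder\'on reproducing formula $f=\sum_{k\in\zz}\vz_k\ast\wz\vz_k\ast f$ (valid on $\cs'_0(\rn)$, this is where vanishing weakly at infinity is essential) and then perform a Calder\'on–Zygmund–type stopping-time decomposition of $\rn$ adapted to the level sets $\boz_\ell:=\{x\in\rn:\ S(f)(x)>2^\ell\}$, $\ell\in\zz$. On each Whitney-type dilated ball in $\boz_\ell\setminus\boz_{\ell+1}$ one groups the pieces of the reproducing formula with ``tent'' contained in the corresponding region to build a molecule (or an atom after a further standard argument), with the correct size normalization $\sim2^\ell|B|^{1/p}$ coming from the $L^2$ mass of $S(f)$ over the Whitney region. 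One must verify: the support/overlap conditions of Definition \ref{d-ats}, the size estimate (b) (via duality with $T^2_2$-type tent-space estimates and the bounded overlap of the Whitney cubes), and the vanishing moments (c) (inherited from those of $\wz\vz$). The resulting decomposition then realizes $f\in H^{p,r,s,q}_A(\rn)=H^{p,q}_A(\rn)$ with $\|f\|_{H^{p,q}_A(\rn)}\ls\|S(f)\|_{\lpq}$, again summing over $\ell$ via \eqref{se6}/\eqref{se7}.

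I expect the main obstacle to be exactly the place the authors single out: in the first (``easy'') direction, obtaining the uniform-in-atom pointwise estimate \eqref{five27} for $S$ applied to an atom. In the classical $H^p$ setting one estimates $\int_{x_0+B_{j_0}}a(y)\,K(x,y)\,dy$ by pairing $a$ against the smooth kernel and using its Taylor remainder, but the Lorentz quasi-norm is not a Banach function norm and the anisotropic geometry forces one to track the ``number of annuli'' $\rho(x-x_0)/|B_{j_0}|$ carefully; Fefferman's method \cite{f88} circumvents the missing duality by a direct geometric argument on the tent, and adapting it to the step quasi-norm $\rho$, the dilation $A$, and the exponents $(\lz_-,\lz_+)$ will require care with the exponent $s_0$ in relation to $N_{(p)}$ and the constant $\tau$ from \eqref{se1}--\eqref{se2}. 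A secondary, more technical (but routine) obstacle will be the bookkeeping in the stopping-time decomposition for the converse: ensuring the molecules assembled over Whitney regions of $\boz_\ell\setminus\boz_{\ell+1}$ genuinely have bounded overlap uniformly in $\ell$, so that the $\lpq$ summation over $\ell$ goes through cleanly; this parallels \cite{lyy15} but needs the Lorentz-space bookkeeping rather than the $L^p$ one.
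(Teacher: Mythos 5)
Your overall skeleton (atomic decomposition for one implication, Calder\'on reproducing formula plus level-set/Whitney-type decomposition for the other) matches the paper, but you have misassigned the paper's key technical novelty, and in the place where it actually belongs you have proposed exactly the method the authors say fails.

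Concretely: the subtle estimate \eqref{five27} and Fefferman's method from \cite{f88} do not belong in the direction $\|S(f)\|_{\lpq}\ls\|f\|_{H_A^{p,q}(\rn)}$, where you have placed them. There, the pointwise bound on $S(a)$ for an atom $a$ (the paper's \eqref{five7}) is obtained by the standard route you correctly outline (Taylor remainder, vanishing moments of $a$ and $\vz$, decay of $\vz$), and the genuinely nonroutine part is not the pointwise atom estimate but the summation of the atomic pieces in the Lorentz quasi-norm: the paper splits $S(f)\le\psi_{k_0}+\eta_{k_0}$, invokes \cite[Lemma 1.2]{wa07} (Lemma \ref{tl3}), proves the two distributional bounds \eqref{five16}, and treats $q/p\in[1,\fz]$ and $q/p\in(0,1)$ separately. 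Your proposal does not mention this lemma or the case split, which is where the Lorentz bookkeeping really lives. The estimate \eqref{five27}, $[S(\sum_Q e_Q)(x)]^2\ls\sum_Q[M_{\rm HL}(c_Q\chi_Q)(x)]^2$, and Fefferman's method are used in the \emph{converse} direction, Step (ii) of the sufficiency proof: after writing $f=\sum_k\sum_i h_i^k$ via Lemma \ref{fivel2} and grouping by Whitney-type dyadic cubes, one needs an $L^r$ bound on each $h_i^k$ to show it is a multiple of a $(p,r,s)$-atom. This is precisely where you propose ``duality with $T^2_2$-type tent-space estimates,'' but the introduction explicitly warns that the dual method breaks down in this anisotropic Lorentz setting; the paper replaces it by \eqref{five27} together with the Fefferman–Stein vector-valued maximal inequality from \cite[Theorem 2.5]{bh06} and a careful restriction to $Q\cap(\widehat\Omega_k\setminus\Omega_{k+1})$ (Steps (iii)–(v)). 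As written, your plan for the size estimate (b) would stall at exactly the point the authors flag as the main obstacle.

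So the gap is twofold: (1) you placed the Fefferman idea in the wrong implication, and (2) in the implication where it belongs you proposed the very duality argument that the paper says is unavailable. To repair the proposal, swap those two pieces: keep the Taylor/decay argument and add Lemma \ref{tl3} with the $q/p$ case analysis for the ``easy'' direction, and for the converse replace duality by \eqref{five27} together with the vector-valued maximal estimate to get the $L^r$ bound on each $h_i^k$.
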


\begin{theorem}\label{fivet2}
Let $p\in(0,1]$ and $q\in(0,\fz]$.
Then $f\in H_A^{p,q}(\rn)$ if and only if
$f\in\cs'_0(\rn)$ and $g(f)\in L^{p,q}(\rn)$.
Moreover, there exists a positive constant
$C_2$ such that, for all $f\in H_A^{p,q}(\rn)$,
$$\frac1{C_2}\lf\|g(f)\r\|_{L^{p,q}(\rn)}
\leq\|f\|_{H_A^{p,q}(\rn)}
\leq C_2\lf\|g(f)\r\|_{L^{p,q}(\rn)}.$$
\end{theorem}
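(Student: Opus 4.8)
The plan is to deduce Theorem \ref{fivet2} from the Lusin-area characterization already in hand, namely Theorem \ref{fivet1}. Granting that theorem, it suffices to prove that, for every $f\in\cs'_0(\rn)$,
$$
\|g(f)\|_{L^{p,q}(\rn)}\sim\|S(f)\|_{L^{p,q}(\rn)},
$$
the two quasi-norms being simultaneously finite or infinite; then the three statements ``$f\in H^{p,q}_A(\rn)$'', ``$f\in\cs'_0(\rn)$ and $S(f)\in L^{p,q}(\rn)$'', and ``$f\in\cs'_0(\rn)$ and $g(f)\in L^{p,q}(\rn)$'' become equivalent with comparable quasi-norms, which is exactly the assertion. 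So everything reduces to the two norm comparisons $\|g(f)\|_{L^{p,q}(\rn)}\lesssim\|S(f)\|_{L^{p,q}(\rn)}$ and its reverse. (The naive pointwise bound $[S(f)(x)]^2\le\sum_{k\in\zz}M_{{\rm HL}}(|f\ast\varphi_k|^2)(x)$, which follows from $\chi_{x+B_k}(y)=\chi_{y+B_k}(x)$ and the fact that $b^{-k}\chi_{B_k}$ is an $L^1$-normalised bump, settles one direction cheaply when $p>2$; but in the Hardy range $p\le1$ the exponent $p/2<1$ lies outside the boundedness range of $M_{{\rm HL}}$, which is precisely what forces the heavier machinery.)

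I would build the argument on three ingredients. First, an anisotropic Fefferman--Stein vector-valued maximal inequality in the Lorentz scale (Lemma \ref{fivel6}), proved along the lines of \cite[Theorem 1]{ycp05} from the $L^{p,q}(\rn)$-boundedness of $M_{{\rm HL}}$ (itself a consequence of Proposition \ref{tl4}, Remark \ref{tr1} and the real-interpolation structure of Lorentz spaces); in the range $p\le1$ this is invoked only after the power substitution $g_j\mapsto|g_j|^r$, $\ell^2\mapsto\ell^{2/r}$, $(p,q)\mapsto(p/r,q/r)$ with $r\in(0,p)$, so that the exponent $p/r$ exceeds $1$. Second, the anisotropic discrete Calder\'on reproducing formula \cite[Lemma 3.2]{lfy14}: after first replacing the fixed $\varphi$ of Definition \ref{fived1} by a well-chosen $\psi\in\cs(\rn)$ whose $S$- and $g$-functions are $L^{p,q}(\rn)$-equivalent to the original ones, it lets me write $f=\sum_{k\in\zz}\sum_{Q}|Q|\,(f\ast\psi_k)(y_Q)\,\eta_k(\cdot-y_Q)$ in $\cs'(\rn)$, over the natural family of dilated ``cubes'' $Q$ with $|Q|=b^k$ and centre $y_Q$. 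Third, the almost-orthogonality and discrete-bump estimates (Lemmas \ref{fivel7} and \ref{fivel8}, the latter in the spirit of \cite[Lemma 3.3]{bh06}): for every $N$ one has $|(\eta_k\ast\varphi_l)(x)|\lesssim\lambda_-^{-|k-l|\delta}\,b^{-(k\wedge l)}[1+\rho(A^{-(k\wedge l)}x)]^{-N}$ for a fixed $\delta>0$, and a sum $\sum_{Q}|\lambda_Q|\,|Q|\,b^{-(k\wedge l)}[1+\rho(A^{-(k\wedge l)}(x-y_Q))]^{-N}$ is dominated, once $N$ is large relative to $1/r$, by a harmless power of $b^{k-l}$ times $[M_{{\rm HL}}(\sum_Q|\lambda_Q|^r\chi_Q)(x)]^{1/r}$.

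The assembly for $\|g(f)\|_{L^{p,q}(\rn)}\lesssim\|S(f)\|_{L^{p,q}(\rn)}$ then runs: convolve the reproducing formula with $\varphi_l$, square, apply H\"older in the $k$-sum to absorb the summable factor $\lambda_-^{-|k-l|\delta}$, then the discrete-bump estimate, and then the vector-valued maximal inequality of the first ingredient with exponents $(p/r,q/r)$ and $\ell^{2/r}$; this reduces matters to bounding $\|(\sum_{k\in\zz}(\sum_Q|(f\ast\psi_k)(y_Q)|^r\chi_Q)^{2/r})^{1/2}\|_{L^{p,q}(\rn)}$, which in turn is $\lesssim\|S(f)\|_{L^{p,q}(\rn)}$ because, for $x\in Q$, $|(f\ast\psi_k)(y_Q)|^2\chi_Q(x)$ is dominated by an average of $|f\ast\psi_k|^2$ over $x+B_{k+c}$, that is, by a shifted-cone variant of $S(f)$ that is $L^{p,q}(\rn)$-comparable to $S(f)$ (a comparison of exactly the type isolated in Lemma \ref{fivel11}). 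The reverse inequality is symmetric: one integrates $|f\ast\varphi_l(y)|^2$ over the cones $x+B_l$ instead of squaring at the point $x$, observes that $|(f\ast\psi_k)(y_Q)|^2\chi_Q(x)$ is likewise controlled by $g$-type data, and reruns the same H\"older/bump/maximal-inequality chain.

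The hard part will be the simultaneous bookkeeping forced by the two features that make this genuinely harder than the classical $L^p$ statement. As the authors stress, the dual argument that estimates a single atom in $H^p(\rn)$ is not available in the Lorentz setting, so the passage through the Lorentz quasi-norm must be done ``globally'' via Lemma \ref{fivel6}, with care that $M_{{\rm HL}}$ is applied only above its critical exponent; and for $p\le1$ the power-trick exponent $r$, the almost-orthogonality decay rate $\delta$, the kernel-decay exponent $N$, and the numbers of vanishing moments of $\psi$ and $\eta$ must be chosen in the right order so that every geometric series in $k-l$ converges and every maximal inequality is legitimate. Threading this needle is the delicate computation. A smaller but real point is to verify that membership in $\cs'_0(\rn)$ is respected throughout and that ``$f\in\cs'_0(\rn)$ and $g(f)\in L^{p,q}(\rn)$'' actually forces $f\in H^{p,q}_A(\rn)$, i.e.\ that the reproducing formula reconstructs $f$ as a sum convergent in the Hardy--Lorentz quasi-norm; this is the substantive half of the equivalence.
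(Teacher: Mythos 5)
Your overall plan is sound and, for the sufficiency direction (\emph{from} $g$ \emph{to} $H^{p,q}_A$), essentially reproduces the paper's argument: Theorem \ref{fivet1} reduces everything to the single inequality $\|S(f)\|_{L^{p,q}(\rn)}\lesssim\|g(f)\|_{L^{p,q}(\rn)}$, which is then proved from the discrete Calder\'on reproducing formula (Lemma \ref{fivel5}), the almost-orthogonality estimate (Lemma \ref{fivel4}), and the Fefferman--Stein vector-valued inequality in $L^{p,q}(\rn)$ (Lemma \ref{fivel6}), with the parameter $r\in(0,p)$ chosen so that $M_{\rm HL}$ stays above its critical exponent. For the necessity direction you depart from the paper: the paper does not establish $\|g(f)\|_{L^{p,q}(\rn)}\lesssim\|S(f)\|_{L^{p,q}(\rn)}$ directly, but instead rederives the bound $\|g(f)\|_{L^{p,q}(\rn)}\lesssim\|f\|_{H^{p,q}_A(\rn)}$ by repeating the atomic-decomposition proof of the necessity of Theorem \ref{fivet1} (a mostly routine modification, replacing the area function of an atom by the $g$-function). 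Your route --- proving the two-sided norm equivalence $\|g(f)\|_{L^{p,q}(\rn)}\sim\|S(f)\|_{L^{p,q}(\rn)}$ once and for all and then invoking Theorem \ref{fivet1} for both implications --- is structurally cleaner, and is a legitimate alternative, but it buys this cleanness at the price of having to carry out the ``$g\lesssim S$'' reduction in full, which the paper deliberately avoids.

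There is, however, a concrete gap in the step you treat as routine. You write that, for $x\in Q$, the sample $|(f\ast\psi_k)(y_Q)|^2\chi_Q(x)$ ``is dominated by an average of $|f\ast\psi_k|^2$ over $x+B_{k+c}$.'' This is false as a pointwise statement: the value of $f\ast\psi_k$ at a fixed grid point $y_Q$ (and in Lemma \ref{fivel5} the sampling point is the fixed corner $x_Q$, not a point of your choosing) is not controlled by an $L^2$-average over any cube --- the average can be arbitrarily smaller. The mechanism that makes this work in the paper is the sup-inf comparison for majorant sequences, Lemma \ref{fivel9}, used in \eqref{five51}: one first passes to $\sup_Q(f)$, observes that after forming the $(\cdot)^*_{r,\lambda}$ majorant the sup-sequence is controlled by the inf-sequence, and then the inf over a (refined) cube \emph{is} dominated by a pointwise value and hence by $g(f)$ (or, symmetrically, by the $L^2$-average over the cube, hence by a shifted $S$). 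Without Lemma \ref{fivel9} your chain breaks precisely at the point where you exit the discrete picture, and you never mention this lemma in your list of ingredients. The fix is exactly the paper's: insert the majorant step $\sum_{Q}|(f\ast\psi_k)(y_Q)|^2\chi_Q\le\sum_Q[(\mathrm{sup}_Q(f))^*_{r,\lambda}]^2\chi_Q$, apply Lemma \ref{fivel9} to replace $\sup$ by $\inf$, and only then compare to $S$ or $g$, closing the loop with a further application of Lemma \ref{fivel8}.

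A minor remark: the necessity half of the theorem ($f\in H^{p,q}_A(\rn)\Rightarrow g(f)\in L^{p,q}(\rn)$) is, in the paper, handled by the atomic argument and does not rely on any $g$-versus-$S$ comparison. If the bookkeeping in your ``$g\lesssim S$'' direction proves recalcitrant, that atomic route (cf.\ the proof of the necessity of Theorem \ref{fivet1}, together with \cite[Lemma 2.20]{lfy14}) is an available fallback.
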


\begin{theorem}\label{fivet3}
Let $p\in(0,1],\,q\in(0,\fz]$ and
$\lambda\in(2/p,\fz)$.
Then $f\in H_A^{p,q}(\rn)$ if and only if $f\in\cs'_0(\rn)$
and $g_{\lambda}^*(f)\in L^{p,q}(\rn)$.
Moreover, there exists a positive constant $C_3$ such that,
for all $f\in H_A^{p,q}(\rn)$,
$$\frac1{C_3}\lf\|g_\lambda^*(f)\r\|_{L^{p,q}(\rn)}
\leq\|f\|_{H_A^{p,q}(\rn)}
\leq C_3\lf\|g_\lambda^*(f)\r\|_{L^{p,q}(\rn)}.$$
\end{theorem}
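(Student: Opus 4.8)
The plan is to prove Theorem \ref{fivet3} by establishing the two inequalities separately, exploiting the pointwise comparisons between $g_\lambda^*(f)$, $S(f)$ and its dilated variants, and then invoking Theorem \ref{fivet1}. The easy direction is to control $\|f\|_{H_A^{p,q}(\rn)}$ by $\|g_\lambda^*(f)\|_{L^{p,q}(\rn)}$: since $\lambda\in(2/p,\fz)$ and $p\le1$, an elementary pointwise estimate gives $S(f)(x)\ls g_\lambda^*(f)(x)$ for all $x\in\rn$ (the integral defining $S(f)$ is the piece of the integral defining $g_\lambda^*(f)$ over $y\in x+B_k$, on which the kernel $[b^k/(b^k+\rho(x-y))]^\lambda$ is bounded below by a positive constant). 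Hence $\|S(f)\|_{L^{p,q}(\rn)}\ls\|g_\lambda^*(f)\|_{L^{p,q}(\rn)}$, and if additionally $f\in\cs'_0(\rn)$, Theorem \ref{fivet1} yields $f\in H_A^{p,q}(\rn)$ with the desired bound.

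For the converse direction the plan is to dominate $g_\lambda^*(f)$ by a superposition of dilated Lusin area functions. First I would decompose $\rn$, for each fixed $k\in\zz$, into the annuli $x+B_{k+j}\setminus(x+B_{k+j-1})$ indexed by $j\in\zz_+$ (together with $x+B_k$ itself), so that on the $j$-th annulus one has $\rho(x-y)\sim b^{k+j}$ and hence $[b^k/(b^k+\rho(x-y))]^\lambda\sim b^{-j\lambda}$. Summing the contributions of the annuli gives the pointwise bound
\begin{align*}
g_\lambda^*(f)(x)\ls\lf\{\sum_{j\in\zz_+}b^{-j\lambda}b^{j}\lf[S_{j}(f)(x)\r]^2\r\}^{1/2},
\end{align*}
where $S_{j}(f)$ denotes the variant of the anisotropic Lusin area function in which the aperture is dilated by $A^j$ (i.e. the integral runs over $y+B_{k+j}$ and is normalized by $b^{-(k+j)}$); here the factor $b^{j}$ compensates the change of normalization of the averaging ball. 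The point is that $b^{-j\lambda}b^{j}=b^{-j(\lambda-1)}$ is summable in $j$ since $\lambda>2/p\ge2>1$.

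Next I would invoke Lemma \ref{fivel11} (quoted in the introduction as the key technical input), which asserts that for every $k_0\in\nn$ one has $\|S_{k_0}(f)\|_{L^{p,q}(\rn)}\ls\|S(f)\|_{L^{p,q}(\rn)}$, \emph{with control of the dependence of the implicit constant on $k_0$} — specifically a polynomial, or at worst a controlled exponential, growth in $k_0$ that is beaten by the geometric decay $b^{-k_0(\lambda-1)}$ above. Using the $p$-subadditivity of $\|\cdot\|_{L^{p,q}(\rn)}^p$ (valid since $p\le1$, via \eqref{se6}–\eqref{se7}) together with \eqref{se22}, one passes from the pointwise bound to
\begin{align*}
\lf\|g_\lambda^*(f)\r\|_{L^{p,q}(\rn)}^p\ls\sum_{j\in\zz_+}b^{-jp(\lambda-1)/2}\,C(j)^p\,\lf\|S(f)\r\|_{L^{p,q}(\rn)}^p,
\end{align*}
and the series converges precisely when $\lambda>2/p$, which is where the sharp range of $\lambda$ enters. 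Combining this with Theorem \ref{fivet1} (and noting $f\in H_A^{p,q}(\rn)$ implies $f\in\cs'_0(\rn)$, which is already part of the conclusion of Theorem \ref{fivet1}) finishes the proof.

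The main obstacle I expect is the second direction, and specifically making the dependence on the aperture parameter in Lemma \ref{fivel11} quantitative enough: it is not enough that each dilated area function is controlled by $S(f)$, one needs the constant $C(j)$ to grow slowly enough (sub-geometrically with ratio $<b^{(\lambda-1)/2}$, or equivalently $<b^{1/p}$ after optimizing the threshold) so that the $j$-sum converges exactly down to $\lambda=2/p$. Getting the threshold to be $2/p$ rather than some larger number forces a careful bookkeeping of how the averaging-ball normalization $b^{-k}$ versus $b^{-(k+j)}$ interacts with the kernel decay $b^{-j\lambda}$, and a careful use of the anisotropic Fefferman–Stein vector-valued inequality (Lemma \ref{fivel6}) inside the proof of Lemma \ref{fivel11} so that the vector-valued constant does not itself blow up too fast in $j$. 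This is exactly the point where the ideas from \cite[Theorem 7.1]{fs82} and \cite[Theorem 1]{as77} referenced in the introduction are needed.
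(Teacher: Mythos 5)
Your proposal follows essentially the same route as the paper: the easy direction via $S(f)\ls g_\lambda^*(f)$ and Theorem \ref{fivet1}, and the hard direction via the annular decomposition $[g_\lambda^*(f)]^2\ls [S(f)]^2+\sum_{m\ge1}b^{-m(\lambda-1)}[S_m(f)]^2$ together with Lemma \ref{fivel11}, which gives $\|S_{k_0}(f)\|_{L^{p,q}}\ls b^{(1/p-1/2)k_0}\|S(f)\|_{L^{p,q}}$; you correctly identify that the sharp range $\lambda>2/p$ comes from the interplay between the kernel decay $b^{-m(\lambda-1)/2}$ and this exponential growth rate. Two small imprecisions, neither fatal: the paper sums the series using the Aoki--Rolewicz theorem to obtain $\upsilon$-subadditivity for some $\upsilon\in(0,1]$ (rather than relying on $p$-subadditivity of $\|\cdot\|_{L^{p,q}}^p$, which is not obviously available when $q<p$), and the threshold growth rate for the constant in Lemma \ref{fivel11} at the critical value $\lambda=2/p$ is $b^{1/p-1/2}$, not $b^{1/p}$.
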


\begin{remark}\label{sr2}
We point
out that the range of $\lambda$ in Theorem \ref{fivet3} coincides with the
best known one in the classical Hardy space
$H^p(\rn)$ or in the anisotropic Hardy space $H^p_A(\rn)$.
\end{remark}

\section{Proof of Theorem \ref{fivet1}}\label{s5}

\hskip\parindent
From \cite[Proposition 2.7]{lyy15}, we deduce that,
for all $p\in(0,\fz)$ and $q\in(0,\fz]$,
$H_A^{p,q}(\rn)\subset\cs'(\rn)$. Moreover,
we have the following useful property of $H_A^{p,q}(\rn)$.

\begin{proposition}\label{fivep1}
Let $p\in(0,1]$ and $q\in(0,\fz]$.
Then
$$H_A^{p,q}(\rn)\subset\cs'_0(\rn).$$
\end{proposition}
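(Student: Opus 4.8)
The plan is to show that every $f\in H_A^{p,q}(\rn)$ vanishes weakly at infinity, i.e.\ that, for any $\psi\in\cs(\rn)$, $f\ast\psi_k\to0$ in $\cs'(\rn)$ as $k\to\fz$. First I would fix $\psi\in\cs(\rn)$ and a test function $\eta\in\cs(\rn)$, and aim to prove that $\la f\ast\psi_k,\eta\ra\to0$ as $k\to\fz$. The key is a pointwise domination: after normalizing $\psi$ so that (a suitable dilate of) $\psi$ lies in $\cs_N(\rn)$ up to a constant, one has, for every $y\in\rn$ and every $z\in y+B_k$,
\begin{align*}
|f\ast\psi_k(y)|\lesssim M_N(f)(z),
\end{align*}
since $y\in z+B_k$ and $\psi_k$ is (a constant multiple of) an element of the defining family at scale $k$. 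Averaging this inequality in $z$ over $y+B_k$ and using $|B_k|=b^k$ gives
\begin{align*}
|f\ast\psi_k(y)|\lesssim\frac1{b^k}\int_{y+B_k}M_N(f)(z)\,dz\le M_{{\rm HL}}(M_N(f))(y),
\end{align*}
but a cleaner route for the limit is to pair directly against $\eta$ and bound $|\la f\ast\psi_k,\eta\ra|$ by $\int_\rn M_N(f)(z)\,w_k(z)\,dz$ for an appropriate weight $w_k$ coming from spreading $|\eta|$ over the balls $y+B_k$; one checks $\|w_k\|_{L^{(p,q)'}} $ or, more simply, $\|w_k\|_{L^{p'}}$-type quantities tend to $0$ because the "mass'' of $\eta$ gets diluted over the expanding ellipsoid $B_k$ as $k\to\fz$ (recall $|B_k|=b^k\to\fz$).

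The cleanest execution avoids dual exponents of Lorentz spaces. Using $M_N(f)\in L^{p,q}(\rn)$ with $p\le1$, I would split $\rn$ according to the level sets of $M_N(f)$: for $\lz>0$, on $\{M_N(f)\le\lz\}$ use the crude bound $|f\ast\psi_k(y)|\lesssim\lz$ together with the rapid decay of $\eta$ to control $\int|\eta|$; on $\{M_N(f)>\lz\}$ use that, by \eqref{se7} (for $q=\fz$) or \eqref{se6}, $|\{M_N(f)>\lz\}|\lesssim\lz^{-p}\|f\|_{H_A^{p,q}(\rn)}^{?}$ is finite, hence has finite measure, so the contribution of the integral over this set is small when combined with the spreading of $|\eta|$ over $y+B_k$. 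Quantitatively, for $y$ in the support-neighborhood of $\eta$ one has $|f\ast\psi_k(y)|\lesssim b^{-k}\int_{y+B_k}M_N(f)$, and as $k\to\fz$ the ball $y+B_k$ has measure $b^k\to\fz$, so $b^{-k}\int_{y+B_k}M_N(f)\le b^{-k}\|M_N(f)\|_{L^1(y+B_k)}$; here one uses that $M_N(f)\in L^{p,q}(\rn)\subset L^{p,\fz}(\rn)$ with $p\le 1$ forces $M_N(f)$ to be locally integrable-ish, and a Hölder/real-interpolation argument on $y+B_k$ shows this tends to $0$ uniformly for $y$ in a fixed compact set. Then dominated convergence against $\eta$ finishes $\la f\ast\psi_k,\eta\ra\to0$.

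The main obstacle I anticipate is making the decay "$b^{-k}\int_{y+B_k}M_N(f)\to0$'' rigorous using only membership in $L^{p,q}(\rn)$ with $p\in(0,1]$, since for $p<1$ one cannot directly invoke $L^1$-integrability; the fix is to interpolate: write $\int_{y+B_k}M_N(f)=\int_{y+B_k}M_N(f)\chi_{\{M_N(f)\le\lz_k\}}+\int_{y+B_k}M_N(f)\chi_{\{M_N(f)>\lz_k\}}$, bound the first term by $\lz_k b^k$ and the second, via $\int_{\{g>\lz\}}g\le\|g\|_{L^{p,\fz}}^p\lz^{1-p}/(1-p)$ (valid for $p<1$), by $C\lz_k^{1-p}\|M_N(f)\|_{L^{p,\fz}}^p$, then optimize $\lz_k$ (e.g.\ $\lz_k=b^{-k/p}$) to get a bound $\to0$; a separate, easier argument handles $p=1$ via absolute continuity of the integral. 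The rest — choosing $N>N_{(p)}$, normalizing $\psi_k$ into $\cs_N(\rn)$, interchanging the pairing with the integral, and invoking dominated convergence — is routine given Lemma~\ref{sl1}, Definition~\ref{d-ahls}, and \eqref{se6}--\eqref{se7}.
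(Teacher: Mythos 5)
Your starting observation is the same as the paper's: for $z\in y+B_k$ one has $|f\ast\psi_k(y)|\lesssim M_N(f)(z)$. But you immediately average this over $z$, obtaining $|f\ast\psi_k(y)|\lesssim b^{-k}\int_{y+B_k}M_N(f)$, and that step gives up the crucial structure. The paper instead retains the full infimum-type information: since every $z$ in the ball $y+B_k$ satisfies $M_N(f)(z)>C_4|f\ast\psi_k(y)|$, the ball (of measure $b^k$) sits inside the superlevel set $\{M_N(f)>C_4|f\ast\psi_k(y)|\}$, and Chebyshev's inequality for $L^{p,\infty}$ yields the uniform bound $|f\ast\psi_k(y)|\lesssim b^{-k/p}\|M_N(f)\|_{L^{p,\infty}(\rn)}\to 0$. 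That alone gives $\|f\ast\psi_k\|_{L^\infty(\rn)}\to 0$, hence weak-$\ast$ convergence after pairing with any $\eta\in\cs(\rn)$, with no need for dominated convergence, cut-offs, or any integrability of $M_N(f)$.

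The gap in your version is the inequality you propose to close the argument. The bound
\[
\int_{\{g>\lambda\}}g\le\frac{\|g\|_{L^{p,\infty}(\rn)}^p}{1-p}\,\lambda^{1-p}\qquad(p<1)
\]
is false. The correct Cavalieri computation gives, for $p<1$,
\[
\int_{\{0<g\le\lambda\}}g\le\int_0^\lambda d_g(\mu)\,d\mu\le\frac{\|g\|_{L^{p,\infty}(\rn)}^p}{1-p}\,\lambda^{1-p},
\]
i.e.\ the estimate controls the \emph{small} part $\{g\le\lambda\}$, not the large part $\{g>\lambda\}$. For the large part one needs $\int_\lambda^\infty d_g(\mu)\,d\mu$, and $d_g(\mu)\lesssim\mu^{-p}$ is not integrable at $+\infty$ when $p<1$. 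In fact $g(x)=|x|^{-n/p}\chi_{B(0,1)}(x)$ lies in $L^{p,\infty}(\rn)$ for $p<1$ but satisfies $\int_{\{g>1\}}g=\infty$, so both the formula and the quantity $b^{-k}\int_{y+B_k}M_N(f)$ you are trying to estimate may be $+\infty$; the averaged inequality is then vacuous even though $|f\ast\psi_k(y)|$ is finite. So the interpolation fix you describe does not go through, and without it your dominated-convergence route has no control on the contribution from the large values of $M_N(f)$. The repair is precisely to avoid averaging and run the level-set/Chebyshev argument as in the paper.
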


\begin{proof}
Observe that, for any
$\psi\in\cs(\rn),\,k\in\mathbb{Z},\,f\in H_A^{p,q}(\rn),\,x\in\rn$
and $y\in x+B_k$, $|f\ast\psi_k(x)|\ls M_N(f)(y)$.
Hence, there exists a positive constant $C_4$ such that
$$x+B_k\subset\lf\{y\in\rn:\ M_N(f)(y)>C_4|f\ast\psi_k(x)|\r\}.$$
By this and \eqref{se7}, we further have
\begin{align*}
|f\ast\psi_k(x)|
&=|B_k|^{-1/p}|B_k|^{1/p}|f\ast\psi_k(x)|\\
&\ls|B_k|^{-1/p}\lf|\lf\{y\in\rn:\ M_N(f)(y)>
C_4|f\ast\psi_k(x)|\r\}\r|^{1/p}|f\ast\psi_k(x)|\\
&\ls|B_k|^{-1/p}\|f\|_{H^{p,\fz}(\rn)}
\ls|B_k|^{-1/p}\|f\|_{H_A^{p,q}(\rn)}\rightarrow0
\end{align*}
as $k\rightarrow\fz$. Thus,
$f\in\cs'_0(\rn)$, which completes the proof of Proposition \ref{fivep1}.
\end{proof}

The following lemma is just \cite[Lemma 1.2]{wa07}.

\begin{lemma}\label{tl3}
Suppose that $p\in(0,\infty)$, $q\in(0,\fz]$,
$\{\mu_k\}_{k\in\mathbb{Z}}$ is a non-negative sequence
of complex numbers such that
$\{2^k\mu_k\}_{k\in\mathbb{Z}}\in\ell^q$ and
$\varphi$ is a non-negative function having the following
property: there exists $\delta\in(0,\min\{1, q/p\})$ such that,
for any $k_0\in\mathbb{N}$,
$\varphi\leq\psi_{k_0}+\eta_{k_0}$, where $\psi_{k_0}$
and $\eta_{k_0}$ are functions, depending on $k_0$ and satisfying
\begin{align*}
2^{k_0p}\lf[d_{\psi_{k_0}}(2^{k_0})\r]^\delta
\leq\widetilde{C}\sum_{k=-\infty}^{k_0-1}
\lf[2^k(\mu_k)^\delta\r]^p,\ \ \ 2^{k_0\delta p}
d_{\eta_{k_0}}(2^{k_0})\leq\widetilde{C}
\sum_{k=k_0}^\infty\lf[2^{k\delta}\mu_k\r]^p
\end{align*}
for some positive constant $\widetilde{C}$
independent of $k_0$. Then
$\varphi\in L^{p,q}(\rn)$ and
$$\|\varphi\|_{L^{p,q}(\rn)}\leq
C\|\{2^k\mu_k\}_{k\in\mathbb{Z}}\|_{\ell^q},$$
where $C$ is a positive constant independent of
$\varphi$ and $\{\mu_k\}_{k\in\mathbb{Z}}$.
\end{lemma}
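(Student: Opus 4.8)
The plan is to reduce the assertion to the discrete description of the Lorentz quasi-norm recorded in \eqref{se6} and \eqref{se7}: it suffices to estimate the $\ell^q$-norm (resp., the supremum when $q=\fz$) of the sequence $\{2^{k_0}[d_\varphi(2^{k_0})]^{1/p}\}_{k_0\in\zz}$, where $d_\varphi(\az):=|\{x\in\rn:\ \varphi(x)>\az\}|$ as in Section \ref{s2}. For each fixed $k_0$, the hypothesis $\varphi\le\psi_{k_0}+\eta_{k_0}$ gives $\{\varphi>2^{k_0}\}\subset\{\psi_{k_0}>2^{k_0-1}\}\cup\{\eta_{k_0}>2^{k_0-1}\}$, hence $d_\varphi(2^{k_0})\ls d_{\psi_{k_0}}(2^{k_0})+d_{\eta_{k_0}}(2^{k_0})$ by monotonicity of the distribution function, and therefore
$$2^{k_0}\lf[d_\varphi(2^{k_0})\r]^{1/p}\ls 2^{k_0}\lf[d_{\psi_{k_0}}(2^{k_0})\r]^{1/p}+2^{k_0}\lf[d_{\eta_{k_0}}(2^{k_0})\r]^{1/p}=:\mathrm{I}_{k_0}+\mathrm{II}_{k_0}.$$
Writing $\nu_k:=2^k\mu_k$, so that $\{\nu_k\}_{k\in\zz}\in\ell^q$ by assumption, the task is then to bound $\|\{\mathrm{I}_{k_0}\}_{k_0}\|_{\ell^q}$ and $\|\{\mathrm{II}_{k_0}\}_{k_0}\|_{\ell^q}$ by a constant multiple of $\|\{\nu_k\}_k\|_{\ell^q}$, and likewise with suprema when $q=\fz$.

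For $\mathrm{I}_{k_0}$ I would insert the first hypothesis into $\mathrm{I}_{k_0}=2^{k_0}[d_{\psi_{k_0}}(2^{k_0})]^{1/p}$: solving that hypothesis for $d_{\psi_{k_0}}(2^{k_0})$, substituting $\mu_k=2^{-k}\nu_k$, and writing $b:=p(1-\dz)$ (positive precisely because $\dz<1$), an elementary computation gives
$$\mathrm{I}_{k_0}^{\dz p}\ls\sum_{k<k_0}2^{(k-k_0)b}\nu_k^{\dz p},\qquad\text{whence}\qquad\mathrm{I}_{k_0}^q\ls\lf(\,\sum_{k<k_0}2^{(k-k_0)b}\nu_k^{\dz p}\r)^{q/(\dz p)}.$$
Since $\dz<q/p$, the exponent $q/(\dz p)$ exceeds $1$, and the inner sum is the convolution of $\{\nu_k^{\dz p}\}_k$ with the sequence $(2^{jb})_{j\le-1}$, which lies in $\ell^1$ because $b>0$; Young's inequality for convolutions (exponents $1$, $q/(\dz p)$, $q/(\dz p)$) then yields $\sum_{k_0}\mathrm{I}_{k_0}^q\ls\sum_k\nu_k^q$. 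By the symmetric manipulation of the second hypothesis,
$$\mathrm{II}_{k_0}^p\ls\sum_{k\ge k_0}2^{-(k-k_0)b}\nu_k^p,\qquad\text{so}\qquad\mathrm{II}_{k_0}^q\ls\lf(\,\sum_{k\ge k_0}2^{-(k-k_0)b}\nu_k^p\r)^{q/p};$$
when $q/p\ge1$ this is again controlled via Young's inequality with the $\ell^1$-sequence $(2^{-jb})_{j\ge0}$, and when $q/p<1$ via the elementary bound $(\sum_j c_j)^{q/p}\le\sum_j c_j^{q/p}$ together with convergence of $\sum_{j\ge0}2^{-jbq/p}$. Either way $\sum_{k_0}\mathrm{II}_{k_0}^q\ls\sum_k\nu_k^q$, so combining the two estimates gives $\|\varphi\|_{L^{p,q}(\rn)}\ls\|\{2^k\mu_k\}_k\|_{\ell^q}$ when $q\in(0,\fz)$. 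For $q=\fz$ one argues identically with $\ell^q$-sums replaced by suprema, using convergence of $\sum_{j\le-1}2^{jb}$ and $\sum_{j\ge0}2^{-jb}$.

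I expect the main obstacle to be organizational rather than conceptual: keeping all the exponents consistent while passing between $\mu_k$, $\nu_k=2^k\mu_k$ and the geometric weights $2^{\pm(k-k_0)b}$, and checking at each step that these weights sit in $\ell^1$, which is exactly where $\dz\in(0,1)$ enters, just as $\dz<q/p$ is exactly what pushes the outer exponent $q/(\dz p)$ above $1$. It is also worth recording along the way that the right-hand sides $\sum_{k<k_0}2^{kp}\mu_k^{\dz p}$ and $\sum_{k\ge k_0}2^{k\dz p}\mu_k^p$ of the two hypotheses are themselves finite — again a consequence of $\{2^k\mu_k\}\in\ell^q$ together with $\dz\in(0,1)$ — so that each $d_\varphi(2^{k_0})$ is finite and $\varphi$ genuinely belongs to $L^{p,q}(\rn)$.
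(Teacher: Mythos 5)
The paper itself gives no proof of this lemma; it simply cites it as \cite[Lemma 1.2]{wa07}. So the comparison must be against the correctness and plausibility of your argument on its own terms, and against the standard way such dyadic Lorentz-space lemmas are proved, which is essentially what you do.

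Your overall strategy is sound and is surely the intended one: express $\|\varphi\|_{L^{p,q}}$ through $\{2^{k_0}[d_\varphi(2^{k_0})]^{1/p}\}_{k_0}$ via \eqref{se6}--\eqref{se7}, split with the hypotheses into $\mathrm{I}_{k_0}$ and $\mathrm{II}_{k_0}$, rewrite both in terms of $\nu_k=2^k\mu_k$ and the geometric ratio $2^{\pm(k-k_0)b}$ with $b=p(1-\delta)>0$, and then sum: the $\mathrm{I}$-part by convolution (Young) at the outer exponent $q/(\delta p)>1$ (this is precisely what $\delta<q/p$ buys), and the $\mathrm{II}$-part by Young when $q/p\ge1$ and by the $r$-triangle inequality plus a geometric tail when $q/p<1$; the case $q=\infty$ is a sup version of the same. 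The exponent bookkeeping you display for $\mathrm{I}_{k_0}^{\delta p}\ls\sum_{k<k_0}2^{(k-k_0)b}\nu_k^{\delta p}$ and $\mathrm{II}_{k_0}^{p}\ls\sum_{k\ge k_0}2^{-(k-k_0)b}\nu_k^{p}$ is correct, as is the observation that $\delta\in(0,1)$ makes the geometric weights summable and that the hypotheses' right-hand sides are finite because $\{\nu_k\}\in\ell^q$ is in particular bounded.

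There is one concrete slip you should repair. From $\varphi\le\psi_{k_0}+\eta_{k_0}$ and $\varphi(x)>2^{k_0}$ you may only conclude that one of $\psi_{k_0}(x)$, $\eta_{k_0}(x)$ exceeds $2^{k_0-1}$, which yields $d_\varphi(2^{k_0})\le d_{\psi_{k_0}}(2^{k_0-1})+d_{\eta_{k_0}}(2^{k_0-1})$; since distribution functions are non-increasing, this is \emph{not} dominated by $d_{\psi_{k_0}}(2^{k_0})+d_{\eta_{k_0}}(2^{k_0})$, so the line ``$d_\varphi(2^{k_0})\ls d_{\psi_{k_0}}(2^{k_0})+d_{\eta_{k_0}}(2^{k_0})$ by monotonicity'' is going in the wrong direction and is not a consequence of what you wrote. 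The standard fix is to threshold at a higher level: from the same splitting one gets
$d_\varphi(2^{k_0+1})\le d_{\psi_{k_0}}(2^{k_0})+d_{\eta_{k_0}}(2^{k_0})$, and after raising to $q/p$, multiplying by $2^{(k_0+1)q}$ and summing in $k_0$ (or taking the supremum when $q=\fz$), the left side is, up to a factor $2^q$ and a harmless reindexing, exactly $\|\varphi\|_{L^{p,q}(\rn)}^q$ via \eqref{se6}. From that point on your estimates for $\mathrm{I}_{k_0}$ and $\mathrm{II}_{k_0}$ go through unchanged. With this one-line correction the proof is complete and matches the natural route one expects behind the cited \cite[Lemma 1.2]{wa07}.
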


The following lemma is just \cite[Lemma 2.3]{blyz10},
which is a slight modification of \cite[Theorem 11]{cm90}.

\begin{lemma}\label{fivel1}
Suppose that $A$ is a dilation on $\rn$. Then there exists a collection
$$\mathcal{Q}:=\{Q_\alpha^k\subset\rn:\ k\in\mathbb{Z},
\,\alpha\in I_k\}$$
of open subsets, where $I_k$ is certain index set, such that
\begin{enumerate}
\item[{\rm (i)}] $\lf|\rn\setminus\bigcup_{\alpha}Q_\alpha^k\r|=0$
for each fixed $k$ and
$Q_\alpha^k\cap Q_\beta^k=\emptyset$ if $\alpha\neq\beta$;
\item[{\rm(ii)}] for any $\alpha,\,\beta,\,k,\,\ell$ with $\ell\geq k$,
either $Q_\alpha^k\cap Q_\beta^\ell=\emptyset$ or
$Q_\alpha^\ell\subset Q_\beta^k$;
\item[{\rm(iii)}] for each $(\ell,\beta)$ and each $k<\ell$,
there exists a unique $\alpha$ such that
$Q_\beta^\ell\subset Q_\alpha^k$;
\item[{\rm(iv)}] there exist certain negative integer $v$
and positive integer $u$ such that, for all $Q_\alpha^k$
with $k\in\mathbb{Z}$ and $\alpha\in I_k$,
there exists $x_{Q_\alpha^k}\in Q_\alpha^k$
satisfying that, for all $x\in Q_\alpha^k$,
$$x_{Q_\alpha^k}+B_{vk-u}
\subset Q_\alpha^k\subset x+B_{vk+u}.$$
\end{enumerate}
\end{lemma}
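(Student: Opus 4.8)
The plan is to realize $\mathcal{Q}$ as an anisotropic version of Christ's dyadic cube decomposition for spaces of homogeneous type \cite[Theorem 11]{cm90}, exploiting that $(\rn,\rho,dx)$ is a space of homogeneous type whose quasi-metric is perfectly adapted to the family $\{B_k\}_{k\in\zz}$. The one elementary fact used throughout is that, for the step homogeneous quasi-norm, $\{y\in\rn:\ \rho(x-y)<b^m\}=x+B_m$ \emph{exactly}, for all $x\in\rn$ and $m\in\zz$; thus every $\rho$-ball of ``radius'' a power of $b$ is a translate of some $B_m$, and conversely. Because of this, it is natural to run the Christ-type construction not with abstract scales $\delta^k$ but with the genuine integer scales $b^{vk}$, where $v$ is a fixed \emph{negative} integer chosen so that $b^v\in(0,1)$ is small enough for the usual separation and covering estimates; the freedom to take $v$ negative is exactly why the statement allows $v$ to be any negative integer. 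The constants $a_0$ and $C$ occurring in Christ's conclusion may then be taken to be powers $b^{-u_0}$ and $b^{u_0}$ of $b$, so that the $\rho$-balls of radii $a_0b^{vk}$ and $Cb^{vk}$ about a point $z$ become \emph{precisely} the dilated balls $z+B_{vk-u_0}$ and $z+B_{vk+u_0}$.

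The construction then follows the classical pattern. For each $k\in\zz$, choose (by Zorn's lemma) a maximal $b^{vk}$-separated set $\{x_\alpha^k\}_{\alpha\in I_k}\subset\rn$, i.e., $\rho(x_\alpha^k-x_\beta^k)\geq b^{vk}$ for $\alpha\neq\beta$; doubling of $dx$ with respect to $\rho$-balls forces $I_k$ to be at most countable and locally finite, while maximality gives $\rn=\bigcup_\alpha(x_\alpha^k+B_{vk})$. One assigns to each level-$(k{+}1)$ center a \emph{parent} level-$k$ center within $\rho$-distance a fixed multiple of $b^{vk}$, breaks ties in a fixed measurable way, and defines $Q_\alpha^k$ as the (suitably interior-adjusted) union of all descendants of $x_\alpha^k$, exactly as in \cite{cm90}; properties (ii) and (iii) then follow from the parent structure, the sets $Q_\alpha^k$ are open, and pairwise disjointness at a fixed level holds by construction. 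Christ's argument also yields $x_\alpha^k\in Q_\alpha^k$ together with $\rho$-ball bounds which, by the identity above, become $x_\alpha^k+B_{vk-u_0}\subset Q_\alpha^k\subset x_\alpha^k+B_{vk+u_0}$. To upgrade the right-hand containment to hold for every $x\in Q_\alpha^k$, note that $x\in Q_\alpha^k\subset x_\alpha^k+B_{vk+u_0}$ forces $x_\alpha^k-x\in -B_{vk+u_0}\subset B_{vk+u_0+c_0}$ for a fixed $c_0\in\zz_+$ depending only on $A$ (since $-\Delta$ is a bounded neighbourhood of the origin), whence, by \eqref{se1}, $Q_\alpha^k\subset x_\alpha^k+B_{vk+u_0}\subset x+B_{vk+u_0+c_0}+B_{vk+u_0}\subset x+B_{vk+u_0+c_0+\tau}$. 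Setting $u:=u_0+c_0+\tau$, $x_{Q_\alpha^k}:=x_\alpha^k$, and observing $x_{Q_\alpha^k}+B_{vk-u}\subset x_{Q_\alpha^k}+B_{vk-u_0}\subset Q_\alpha^k$, we obtain (iv).

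I expect the main obstacle to be, as in Christ's original argument, the verification of the covering part of (i), namely $|\rn\setminus\bigcup_\alpha Q_\alpha^k|=0$: this is the ``vanishing boundary'' estimate, proved by a Borel--Cantelli-type argument showing that the part of any $Q_\alpha^\ell$ lying within $\rho$-distance $\epsilon b^{v\ell}$ of its boundary carries at most an $\epsilon^\eta$-fraction of its Lebesgue measure for some $\eta>0$, which in turn rests on the doubling of $dx$ with respect to $\rho$-balls and on the scale ratio $b^v$ being strictly less than $1$. A second delicate point, absent from the ``single reference scale'' expositions, is the consistency of the parent assignment across \emph{all} $k\in\zz$, i.e., over a doubly infinite range of levels; this is handled by fixing a reference level, building the cubes at finer and coarser levels by forward and backward induction, and then passing to a limit, exactly as in \cite{cm90} and \cite[Lemma 2.3]{blyz10}. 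Once these two points are established, the disjointness in (i) and the remaining assertions are routine from the maximality of the separated sets and the measurability of the tie-breaking rule, requiring no input beyond the already-recorded properties \eqref{se1} and \eqref{se19} of the balls $B_k$.
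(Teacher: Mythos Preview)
Your proposal is correct and follows essentially the same approach as the paper: the paper does not prove this lemma but simply cites it as \cite[Lemma 2.3]{blyz10}, which is itself a slight modification of Christ's dyadic cube construction \cite[Theorem 11]{cm90} for spaces of homogeneous type, and your sketch is precisely an adaptation of that construction to $(\rn,\rho,dx)$ with the scales $b^{vk}$. One minor simplification: since the ellipsoid $\Delta$ in \cite{mb03} is centered at the origin, $-B_m=B_m$ and your constant $c_0$ may be taken to be $0$.
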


In what follows, for convenience, we call
$\mathcal{Q}:=
\{Q_\alpha^k\}_{k\in\mathbb{Z},\,\alpha\in I_k}$
from Lemma \ref{fivel1} \emph{dyadic cubes} and
$k$ the \emph{level}, denoted by $\ell(Q_\alpha^k)$,
of the dyadic cube $Q_\alpha^k$
with $k\in\mathbb{Z}$ and $\alpha\in I_k$.

\begin{remark}\label{fiver1}
In the definition of $(p,r,s)$-atoms (see Definition \ref{d-ats}(i)),
if we replace dilated balls $\mathfrak{B}$ (see \eqref{se14}) by
dyadic cubes, from Lemma \ref{fivel1},
it follows that the corresponding anisotropic atomic Hardy-Lorentz space
coincides with the original one (see Definition \ref{d-ats}(ii))
in the sense of equivalent quasi-norms.
\end{remark}

The following  Calder\'{o}n reproducing formula is just
\cite[Proposition 2.14]{blyz10}.

\begin{lemma}\label{fivel2}
Let $s\in\mathbb{Z_+}$ and $A$ be a dilation on $\rn$.
Then there exist $\theta,\,\psi\in\cs(\rn)$ such that
\begin{enumerate}
\item[{\rm(i)}] $\supp\theta\subset B_0,
\,\int_{\rn}x^\gamma\theta(x)\,dx=0$ for all
$\gamma\in\zz_+^n$ with $|\gamma|\leq s,
\,\widehat{\theta}(\xi)\geq C$
for all $\xi\in\{x\in\rn:\ a\leq\rho(x)\leq b\}$,
where $0<a<b<1$ and $C$ are positive constants;
\item[{\rm(ii)}] $\supp \widehat{\psi}$
is compact and bounded away from the origin;
\item[{\rm(iii)}] $\sum_{j\in\mathbb{Z}}
\widehat{\psi}((A^\ast)^j\xi)\widehat{\theta}((A^\ast)^j\xi)=1$
for all $\xi\in\rn\setminus\{\vec{0}_n\}$,
where $A^\ast$ denotes the adjoint of $A$.
\end{enumerate}
Moreover, for all $f\in\cs'_0(\rn),\,f=
\sum_{j\in\mathbb{Z}}f\ast\psi_j\ast\theta_j$ in $\cs'(\rn)$.
\end{lemma}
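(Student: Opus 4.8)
\medskip\noindent\textbf{Proof proposal.}\quad
The plan is to construct $\theta$ and $\psi$ explicitly on the Fourier transform side, and then to deduce the reproducing formula by a duality argument that splits the ``error'' $f-S_Jf$ into a low-frequency tail, handled by the hypothesis $f\in\cs'_0(\rn)$, and a high-frequency tail, handled by the Schwartz decay of the test function.

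\emph{Step 1 (the function $\theta$).}\quad Fix $M\in\nn$ with $2M>s$ and a real, even $\eta\in C^\fz(\rn)$ with $\supp\eta\st B_0$, $\eta\ge0$ and $\eta\not\equiv0$, so that $\wh\eta(\vec0_n)=\int_\rn\eta>0$. Replacing $\eta$ by $b^m\eta(A^m\cdot)$ for a suitable $m\in\zz_+$ preserves these properties and dilates $\{\wh\eta>0\}$ onto $(A^\ast)^m\{\wh\eta>0\}$, so we may assume $\{\wh\eta>0\}$ contains $\ov{B^\ast_0}$, where $\{B^\ast_k\}_{k\in\zz}$ and $\rho^\ast$ are the dilated balls and the step homogeneous quasi-norm associated with $A^\ast$ via \cite[p.\,5, Lemma 2.2]{mb03}. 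Put $\theta:=(-\triangle)^M\eta$, where $\triangle:=\sum_{j=1}^n\pa^2/\pa x_j^2$. Then $\theta\in C^\fz(\rn)$ is real, even, $\supp\theta\st\supp\eta\st B_0$, and $\wh\theta(\xi)=|\xi|^{2M}\wh\eta(\xi)$; consequently $\pa^\gz\wh\theta(\vec0_n)=0$ for all $\gz\in\zz_+^n$ with $|\gz|\le2M-1$, hence (since $2M-1\ge s$) $\int_\rn x^\gz\theta(x)\,dx=0$ whenever $|\gz|\le s$. Finally, since $\{\wh\eta>0\}$ is an open neighborhood of $\vec0_n$, for small enough $0<a<c<1$ with $c/a>b$ the $\rho^\ast$-annulus $\Gamma:=\{\xi\in\rn:\ a\le\rho^\ast(\xi)\le c\}$ lies in $\{\wh\eta>0\}$; using \eqref{se19} and the equivalence of homogeneous quasi-norms, $|\xi|^{2M}$ and $\wh\eta(\xi)$ are bounded below on $\Gamma$, so $\wh\theta\ge C$ on $\Gamma$, and the same argument with $\rho$ in place of $\rho^\ast$ gives (i).

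\emph{Step 2 (the function $\psi$).}\quad Pick a real, even $\Phi\in C^\fz(\rn)$ with $\Phi\ge0$, $\supp\Phi\st\Gamma$, and $\Phi>0$ on a sub-annulus $\Gamma'\st\Gamma$ wide enough that $\{(A^\ast)^j\Gamma'\}_{j\in\zz}$ covers $\rn\setminus\{\vec0_n\}$ (possible since $c/a>b$). Define
\[
D(\xi):=\sum_{j\in\zz}\wh\theta((A^\ast)^j\xi)\,\Phi((A^\ast)^j\xi),\qquad \xi\in\rn\setminus\{\vec0_n\}.
\]
As $\supp\Phi$ sits in a fixed dilation annulus, this sum is locally finite, so $D\in C^\fz(\rn\setminus\{\vec0_n\})$; since $\wh\theta>0$ on $\supp\Phi$, $\Phi\ge0$, and the dilates of $\Gamma'$ cover, we have $D>0$ on $\rn\setminus\{\vec0_n\}$; moreover $D(A^\ast\xi)=D(\xi)$. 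Set $\wh\psi:=\Phi/D$ on $\rn\setminus\{\vec0_n\}$ and $\wh\psi(\vec0_n):=0$; then $\wh\psi\in C^\fz(\rn)$ is real, even, with $\supp\wh\psi\st\Gamma$ compact and bounded away from the origin, so $\psi:=(\wh\psi)^\vee\in\cs(\rn)$ is real, even, and (ii) holds. Since $D((A^\ast)^j\xi)=D(\xi)$,
\[
\sum_{j\in\zz}\wh\psi((A^\ast)^j\xi)\,\wh\theta((A^\ast)^j\xi)
=\frac{1}{D(\xi)}\sum_{j\in\zz}\Phi((A^\ast)^j\xi)\,\wh\theta((A^\ast)^j\xi)=1,\qquad \xi\in\rn\setminus\{\vec0_n\},
\]
which is (iii).

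\emph{Step 3 (the reproducing formula).}\quad Let $f\in\cs'_0(\rn)$. Each $f\ast\psi_j\ast\theta_j$ is a $C^\fz$ function of polynomial growth, hence lies in $\cs'(\rn)$, and it remains to prove that $S_Jf:=\sum_{|j|\le J}f\ast\psi_j\ast\theta_j\to f$ in $\cs'(\rn)$ as $J\to\fz$. Fix $\phi\in\cs(\rn)$. Since $\theta,\psi$ are even, $(\psi_j\ast\theta_j)(-\cdot)=\psi_j\ast\theta_j$, so $\la S_Jf,\phi\ra=\la f,\phi_J\ra$ with $\phi_J:=\sum_{|j|\le J}\psi_j\ast\theta_j\ast\phi$; using $\wh{g_j}(\xi)=\wh g((A^\ast)^j\xi)$, this gives $\wh{\phi-\phi_J}=[1-m_J]\wh\phi$, where $m_J(\xi):=\sum_{|j|\le J}\wh\psi((A^\ast)^j\xi)\wh\theta((A^\ast)^j\xi)$. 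Because $\wh\psi\wh\theta$ is supported in $\Gamma$, one checks that
\[
1-m_J(\xi)=\Xi((A^\ast)^J\xi)+\Upsilon((A^\ast)^{-J}\xi),\qquad \xi\in\rn,
\]
where $\Xi(\zeta):=\sum_{k\ge1}\wh\psi((A^\ast)^k\zeta)\wh\theta((A^\ast)^k\zeta)$ and $\Upsilon(\zeta):=\sum_{k\ge1}\wh\psi((A^\ast)^{-k}\zeta)\wh\theta((A^\ast)^{-k}\zeta)$. By (iii), $\Xi\equiv1$ on $\{\rho^\ast<a\}$ and $\supp\Xi\st\{\rho^\ast\le cb^{-1}\}$, while on the region in between $\Xi$ is a locally finite sum of $C^\fz$ functions; hence $\Xi\in C^\fz(\rn)$ is even, compactly supported, and $\Xi^\vee\in\cs(\rn)$. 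Thus, up to a normalizing constant, $[\Xi((A^\ast)^J\cdot)\wh\phi]^\vee=(\Xi^\vee)_J\ast\phi$, and, $\Xi^\vee$ being even,
\[
\la f,[\Xi((A^\ast)^J\cdot)\wh\phi]^\vee\ra=\la f\ast(\Xi^\vee)_J,\phi\ra\longrightarrow0\qquad(J\to\fz)
\]
directly from the definition of $\cs'_0(\rn)$. On the other hand, $\Upsilon$ is a bounded $C^\fz$ function, $\equiv0$ near $\vec0_n$, with all derivatives bounded, and $\supp\Upsilon\st\{\rho^\ast\ge ab\}$; hence the Schwartz function $\Upsilon((A^\ast)^{-J}\cdot)\wh\phi$ is supported where $\rho^\ast(\xi)\gs b^J$, so by \eqref{se19} and the rapid decay of $\wh\phi$ every Schwartz seminorm of it tends to $0$ as $J\to\fz$; therefore $[\Upsilon((A^\ast)^{-J}\cdot)\wh\phi]^\vee\to0$ in $\cs(\rn)$ and, $f$ being tempered, $\la f,[\Upsilon((A^\ast)^{-J}\cdot)\wh\phi]^\vee\ra\to0$. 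Adding the two tails, $\la f-S_Jf,\phi\ra\to0$, which is the assertion.

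\emph{Main obstacle.}\quad Steps 1 and 2 are classical constructions — the iterated Laplacian produces the moment conditions and the positivity of $\wh\theta$ near the origin, and dividing by $D$ normalizes the Calder\'on sum — relying only on the dilation structure and the equivalence of homogeneous quasi-norms from Section \ref{s2}. The genuine difficulty is Step 3: one must identify the low-frequency part of $1-m_J$ with a single fixed Schwartz function acting at scale $A^J$ on $\phi$, so that ``vanishing weakly at infinity'' can be applied verbatim, and verify that $\Xi$ is genuinely smooth (in particular at $\vec0_n$, where it equals $1$) and compactly supported; once this is done, the complementary high-frequency part is disposed of by the Schwartz decay of $\wh\phi$ alone.
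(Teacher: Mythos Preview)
The paper does not prove this lemma at all; it simply quotes it as \cite[Proposition 2.14]{blyz10}. Your self-contained argument is correct and supplies what the paper omits. The iterated-Laplacian construction of $\theta$ (Step 1) and the normalized-cutoff construction of $\psi$ (Step 2) are standard and work as stated; in particular, $D$ is strictly positive and $A^\ast$-dilation invariant, so $\wh\psi=\Phi/D$ is smooth with compact support bounded away from the origin, and (iii) follows immediately.

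Your Step 3 is the substantive part, and the decomposition $1-m_J(\xi)=\Xi((A^\ast)^J\xi)+\Upsilon((A^\ast)^{-J}\xi)$ is exactly the right idea. The key observation --- that $\Xi\equiv1$ on a neighborhood of $\vec0_n$ and is compactly supported, hence $\Xi^\vee\in\cs(\rn)$, so that $\langle f,(\Xi^\vee)_J\ast\phi\rangle\to0$ is \emph{literally} the hypothesis $f\in\cs'_0(\rn)$ --- is correctly identified and justified. For the high-frequency tail, your claim that all derivatives of $\Upsilon$ are bounded uses that only $O(1)$ terms of the sum are nonzero at each point and that $\|(A^\ast)^{-k}\|\lesssim\lambda_-^{-k}$ is bounded for $k\ge1$; together with $\supp\bigl[\Upsilon((A^\ast)^{-J}\cdot)\wh\phi\bigr]\subset\{\rho^\ast\gtrsim b^J\}$ and the rapid decay of $\wh\phi$, this indeed forces every Schwartz seminorm to vanish as $J\to\infty$.

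One minor point worth making explicit: in Step 2 you need $\wh\theta\ge C$ on a $\rho^\ast$-annulus (since the partition of unity in (iii) is for the action of $A^\ast$), whereas (i) is stated for a $\rho$-annulus. You do address this, but it is worth stressing that both hold simultaneously because $\wh\eta>0$ on a full Euclidean neighborhood of $\vec0_n$, which contains annuli in either quasi-norm.
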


Now we prove Theorem \ref{fivet1}.

\begin{proof}[Proof of Theorem \ref{fivet1}]
We first prove the necessity of Theorem \ref{fivet1}.
To this end,
assume that $f\in H_A^{p,q}(\rn)$.
By Proposition
\ref{fivep1}, $f\in\cs'_0(\rn)$.
It remains to show $S(f)\in L^{p,q}(\rn)$ and
$\lf\|S(f)\r\|_{L^{p,q}(\rn)}\ls\|f\|_{H_A^{p,q}(\rn)}$.
We prove this by three steps.

\emph{Step 1.}
By Lemma \ref{sl1} and Remark \ref{fiver1},
for $f\in H_A^{p,q}(\rn)=H_A^{p,r,s,q}(\rn)$,
we know that there exists a sequence
$\{a_i^k\}_{i\in\mathbb{N},\,k\in\mathbb{Z}}$ of $(p,r,s)$-atoms,
respectively, supported on
$\{Q_i^k\}_{i\in\mathbb{N},\,k\in\mathbb{Z}}\subset\mathcal{Q}$
such that $f=\sum_{k\in\mathbb{Z}}
\sum_{i\in\mathbb{N}}\lambda_i^ka_i^k$ in $\cs'(\rn)$,
$\lambda_i^k\sim2^k|Q_i^k|^{1/p}$
for all $k\in\mathbb{Z}$ and $i\in\mathbb{N}$,
$\sum_{i\in\mathbb{N}}\chi_{Q_i^k}(x)\ls1$
for all $k\in\mathbb{Z}$ and $x\in\rn$, and

\begin{align}\label{five61}
\|f\|_{H_A^{p,q}(\rn)}\sim\lf[\sum_{k\in\zz}
\lf(\sum_{i\in\nn}|\lambda_i^k|^p\r)^{\frac qp}\r]^{\frac 1q}.
\end{align}

\emph{Step 2.} In this step, we prove that, for all $j\in\mathbb{Z_+}$ and
\begin{align}\label{five21}
x\in U_j:=x_{Q}+(B_{v[\ell(Q)-j-1]+u+2\tau}
\setminus B_{v[\ell(Q)-j]+u+2\tau}),
\end{align}
it holds true that
\begin{align}\label{five7}
S(a)(x)\ls (1-vj)^{1/2}b^{\frac{1+\epsilon}pvj}
b^{-\frac{v\ell(Q)}r}\|a\|_{L^r(Q)}
\end{align}
for some $\epsilon\in(0,\fz)$, where $v,u$ are as in Lemma
\ref{fivel1} and $a$ is a $(p,r,s)$-atom
supported on a dyadic cube $Q$.

To this end,
for any $j\in\mathbb{Z_+}$ and $x\in U_j$, we write
\begin{align*}
\lf[S(a)(x)\r]^2=
\lf[\sum_{k>-v\ell(Q)-u}+\sum_{k\leq-v\ell(Q)-u}\r]b^k
\int_{x+B_{-k}}|a\ast\varphi_{-k}(y)|^2\,dy
=:{\rm II_1}+{\rm II_2}.
\end{align*}

For ${\rm II_1}$, by $-k<v\ell(Q)+u$ and \eqref{se2},
for all $y\in x+B_{-k}$, we know that
$$y\in x+B_{-k}\subset x_{Q}+\lf(B_{v[\ell(Q)-j]+u+2\tau}\r)
^\complement+B_{-k}
\subset  x_{Q}+\lf(B_{v[\ell(Q)-j]+u+\tau}\r)^\complement,$$
which, together with \eqref{se2}, further implies that, for all
$z\in Q\subset  x_{Q}+B_{v\ell(Q)+u}$,
$$y-z\in\lf(B_{v[\ell(Q)-j]+u}\r)^\complement,$$ namely,
\begin{align}\label{five9}
\rho(y-z)\geq b^{v[\ell(Q)-j]+u}.
\end{align}
Since $\varphi\in\cs(\rn)$, taking some $\widetilde{N}\in\mathbb{N}$ to be fixed later,
by \eqref{five9}, the H\"{o}lder inequality and Lemma \ref{fivel1}(iv),
we conclude that, for all $y\in x+B_{-k}$,
\begin{align}\label{five40}
|a\ast\varphi_{-k}(y)|
&\ls b^k\int_{Q}\lf|a(z)\varphi(A^k(y-z))\r|\,dz\\
&\ls b^k\int_{Q}|a(z)|
\frac1{\max\lf\{1,\lf[\rho(A^k(y-z))\r]^{\widetilde{N}+1}\r\}}\,dz\noz\\
&\ls b^{k-(\widetilde{N}+1)\{k+v[\ell(Q)-j]+u\}}\int_{Q}|a(z)|\,dz\noz\\
&\ls b^{k-(\widetilde{N}+1)\{k+v[\ell(Q)-j]+u\}}
b^{\frac{v\ell(Q)}{r'}}\|a\|_{L^r(Q)}\noz\\
&\sim b^{-\widetilde{N}\{k+v[\ell(Q)-j]\}}b^{vj}
b^{-\frac{v\ell(Q)}r}\|a\|_{L^r(Q)}.\noz
\end{align}
By
$s\geq\lfloor(1/p-1)\ln b/\ln\lambda_-\rfloor$,
choosing $\widetilde{N}\in\mathbb{N}$ such that
$b^{\widetilde{N}}>b(\lambda_-)^{s+1}>b^{1/p}$, it follows that
there exists a positive constant $\epsilon$ such that
$b^{\widetilde{N}}>b^{\frac{1+\epsilon}p}$. This, combined with \eqref{five40},
implies that
\begin{align}\label{five11}
{\rm II_1}&\ls\sum_{k>-v\ell(Q)-u}
b^{-2\widetilde{N}\{k+v[\ell(Q)-j]\}}b^{2vj}
b^{-\frac{2v\ell(Q)}r}\|a\|^2_{L^r(Q)}\\
&\ls b^{2\widetilde{N}vj}b^{-\frac{2v\ell(Q)}r}\|a\|
^2_{L^r(Q)}\noz\\
&\ls b^{2\frac{1+\epsilon}pvj}
b^{-\frac{2v\ell(Q)}r}\|a\|^2_{L^r(Q)}
\ls(1-vj)b^{2vj\frac{1+\epsilon}p}
b^{-\frac{2v\ell(Q)}r}\|a\|^2_{L^r(Q)},\noz
\end{align}
where the second and the last inequalities
follow from the fact $v\in(-\fz,0)$.

For ${\rm II_2}$, by
$A^k(z-x_Q)\in B_{k+v\ell(Q)+u}$ for $z\in Q$,
the fact that $k+v\ell(Q)+u\leq0$
and \eqref{se19}, we have
\begin{align}\label{five12}
|A^k(z-x_Q)|\leq(\lambda_-)^{k+v\ell(Q)+u}.
\end{align}
Moreover, for all $y\in x+B_{-k}$, if $v[\ell(Q)-j]+u>-k$,
by \eqref{se2}, we obtain
$$y\in x+B_{-k}\subset x_{Q}+
(B_{v[\ell(Q)-j]+u+2\tau})^\complement+B_{-k}\subset
x_{Q}+(B_{v[\ell(Q)-j]+u+\tau})^\complement.$$
Using this, for all $\zeta\in Q\subset x_Q+B_{v\ell(Q)+u}$,
by \eqref{se2} again, we further know that
$y-\zeta\in(B_{v[\ell(Q)-j]+u})^\complement$,
which implies that
$$\rho(y-\zeta)\geq b^{v[\ell(Q)-j]+u}$$
and hence
\begin{align}\label{five13}
\sup_{\zeta\in Q}\frac1
{\max\lf\{1,\lf[\rho(A^k(y-\zeta))\r]^{N+1}\r\}}
\leq\min\lf\{1,b^{-(N+1)\{k+v[\ell(Q)-j]+u\}}\r\}.
\end{align}
If $v[\ell(Q)-j]+u\leq-k$,
since $b^{-(N+1)\{k+v[\ell(Q)-j]+u\}}\geq1$,
obviously, \eqref{five13} still holds true.

Noticing that $a$ has the vanishing moments up to order $s$,
by Taylor's remainder theorem and $\varphi\in\cs(\rn)$,
we conclude that, for all $y\in x+B_{-k}$,
\begin{align}\label{five14}
&|a\ast\varphi_{-k}(y)|\\
&\hs=b^k\lf|\int_Q a(z)\lf\{\varphi(A^k(y-z))-
\sum_{|\gamma|\leq s}\frac
{\partial^\gamma\varphi(A^k(y-x_Q))}
{\gamma!}\lf[A^k(x_Q-z)\r]^\gamma\r\}\,dz\r|\noz\\
&\hs\ls b^k\int_Q|a(z)|\lf|A^k(x_Q-z)\r|
^{s+1}\sup_{\zeta\in Q}
\frac1{\max\lf\{1,\rho(A^k(y-\zeta))^{\widetilde{N}+1}\r\}}\,dz.\noz
\end{align}
Since $b^{\widetilde{N}}>b(\lambda_-)^{s+1}>b^{\frac{1+\epsilon}p}$,
combining \eqref{five12}, \eqref{five13} and \eqref{five14},
by the H\"{o}lder inequality and Lemma \ref{fivel1}(iv), we have
\begin{align}\label{five15}
{\rm II_2}
&=\lf[\sum_{-v[\ell(Q)-j]-u<k\leq-v\ell(Q)-u}+
\sum_{k\leq-v[\ell(Q)-j]-u}\r]
b^k\int_{x+B_{-k}}|a\ast\varphi_{-k}(y)|^2\,dy\\
&\ls\lf\{\sum_{-v[\ell(Q)-j]-u<k\leq-v\ell(Q)-u}
\lf[(\lambda_-)^{(s+1)[k+v\ell(Q)+u]}\r.\r.\noz\\
&\lf.\hs\times b^{-(\widetilde{N}+1)\{k+v[\ell(Q)-j]+u\}}
b^{k+\frac{v\ell(Q)}{r'}}\r]^2\noz\\
&\lf.\hs+\sum_{k\leq-v[\ell(Q)-j]-u}
\lf[(\lambda_-)^{(s+1)[k+v\ell(Q)+u]}
b^{k+\frac{v\ell(Q)}{r'}}\r]^2\r\}\|a\|
^2_{L^r(Q)}\noz\\
&\ls\lf\{\sum_{-v[\ell(Q)-j]-u<k
\leq-v\ell(Q)-u}b^{-\frac{2v\ell(Q)}r}
\lf[b(\lambda_-)^{s+1}\r]^{2vj}\r.\noz\\
&\hs+\lf.\sum_{k\leq-v[\ell(Q)-j]-u}
b^{-\frac{2v\ell(Q)}r}
\lf[b(\lambda_-)^{s+1}\r]^{2vj}\r\}\|a\|
^2_{L^r(Q)}\noz\\
&\ls(1-vj)b^{2vj\frac{1+\epsilon}p}
b^{-\frac{2v\ell(Q)}r}\|a\|^2_{L^r(Q)}.\noz
\end{align}
Combining the estimates of ${\rm II_1}$ (see \eqref{five11}) and
${\rm II_2}$ (see \eqref{five15}), we know that \eqref{five7} holds true.

\emph{Step 3.}
In this step, we use \eqref{five7}
to show $S(f)\in L^{p,q}(\rn)$ and
$\lf\|S(f)\r\|_{L^{p,q}(\rn)}\ls\|f\|_{H_A^{p,q}(\rn)}$.

To this end, it suffices only to consider
$N=N_{(p)}:=\lfloor(\frac1p-1)\frac{\ln b}{\ln \lambda_-}\rfloor+2$.
For all $k\in\mathbb{Z}$,
let $\mu_k:=(\sum_{i\in\mathbb{N}}|Q_i^k|)^{1/p}$.
Clearly, for $r\in(1,\fz]$, there exists $\delta\in(0,1)$ such that
$\max\{\frac1r,\frac1{1+\epsilon}\}<\delta<1$
and
$\delta p<1$, where $\epsilon$ is as in \eqref{five7}.
Notice that, for any fixed $k_0\in\zz$ and all $x\in\rn$,
$$S(f)(x)\leq S\lf(\sum_{k=-\infty}^{k_0-1}
\sum_{i\in\mathbb{N}}\lambda_i^ka_i^k\r)(x)+
\sum_{k={k_0}}^\infty \sum_{i\in\mathbb{N}}|
\lambda_i^k|S(a_i^k)(x)=:\psi_{k_0}(x)
+\eta_{k_0}(x).$$
To obtain the desired conclusion,
we consider two cases:
$q/p\in[1,\fz]$ and $q/p\in(0,1)$.

\emph{Case 1:}  $q/p\in[1,\fz]$. For this case,
if we prove that
\begin{align}\label{five16}
2^{k_0p}\lf[d_{\psi_{k_0}}(2^{k_0})\r]^\delta\ls
\sum_{k=-\infty}^{k_0-1}\lf[2^k\mu_k^\delta\r]^p\ \ \ \ {\rm and}\ \ \ \
2^{k_0\delta p}d_{\eta_{k_0}}(2^{k_0})\ls
\sum_{k=k_0}^\infty\lf[2^{k\delta}\mu_k\r]^p,
\end{align}
then, noticing that $\delta\in(0,q/p)$, by Lemma \ref{tl3}, the fact that
$|Q_i^k|\sim\frac{|\lambda_i^k|^p}{2^{kp}}$ and \eqref{five61},
we find that
\begin{align*}
\|S(f)\|_{L^{p,q}(\rn)}
\ls\lf\|\lf\{2^k\mu_k\r\}_{k\in\mathbb{Z}}\r\|_{\ell^q}
\ls\lf[\sum_{k\in\zz}\lf(\sum_{i\in\nn}|\lambda_i^k|^p\r)
^{\frac qp}\r]^{\frac 1q}\sim\|f\|_{H_A^{p,q}(\rn)}
\end{align*}
with the usual interpretation for $q=\fz$, which implies that
$S(f)\in L^{p,q}(\rn)$ and
$$\lf\|S(f)\r\|_{L^{p,q}(\rn)}\ls\|f\|_{H_A^{p,q}(\rn)}$$
as desired.

Now we show \eqref{five16}. To this end, we first
estimate $\psi_{k_0}$. Notice that $a_i^k$ is a $(p,r,s)$-atom,
$\supp a\subset Q_i^k$,
$\sum_{i\in\mathbb{N}}\chi_{Q_i^k}\ls1$
and $\lambda_i^k\sim2^k|Q_i^k|^{1/p}$. For $r\in(1,\fz)$,
by the H\"{o}lder inequality, we conclude that, for
$\sigma:=1-\frac p{r\delta}>0$ and all $x\in\rn$,
\begin{align*}
\psi_{k_0}(x)
&\leq\sum_{k=-\infty}^{k_0-1}S\lf(\sum_{i\in\mathbb{N}}
\lambda_i^ka_i^k\r)(x)\\
&\leq\lf(\sum_{k=-\infty}^{k_0-1}2^{k\sigma r'}\r)^{1/r'}
\lf\{\sum_{k=-\infty}^{k_0-1}2^{-k\sigma r}\lf[S\lf(
\sum_{i\in\mathbb{N}}\lambda_i^ka_i^k\r)(x)\r]^r\r\}^{1/r}\\
&=C_52^{k_0\sigma}
\lf\{\sum_{k=-\infty}^{k_0-1}2^{-k\sigma r}
\lf[S\lf(\sum_{i\in\mathbb{N}}\lambda_i^ka_i^k\r)(x)\r]^r\r\}^{1/r},
\end{align*}
where $C_5:=(\frac1{2^{\sigma r'}-1})^{1/r'}$,
which, together with \cite[Theorem 3.2]{blyz10},
further implies that
\begin{align}\label{five17}
&2^{k_0p}\lf[d_{\psi_{k_0}}(2^{k_0})\r]^\delta\\
&\hs\leq2^{k_0p}\lf|\lf\{x\in\rn:\ C_5^r
\sum_{k=-\infty}^{k_0-1}2^{-k\sigma r}
\lf[S\lf(\sum_{i\in\mathbb{N}}\lambda_i^ka_i^k\r)(x)\r]^
r>2^{k_0r(1-\sigma)}\r\}\r|^\delta\noz\\
&\hs\leq C_5^{\delta r}2^{k_0p}
2^{-k_0r\delta(1-\sigma)}\lf\{\int_{\rn}
\sum_{k=-\infty}^{k_0-1}2^{-k\sigma r}\lf[S\lf(
\sum_{i\in\mathbb{N}}\lambda_i^ka_i^k\r)(x)\r]^
r\,dx\r\}^\delta\noz\\
&\hs\ls\lf[\sum_{k=-\infty}^{k_0-1}2^{
-k\sigma r}\int_{\rn}\lf|\sum_{i\in\mathbb{N}}
\lambda_i^ka_i^k(x)\r|^r\,dx\r]^\delta\noz\\
&\hs\ls\lf[\sum_{k=-\infty}^{k_0-1}2^{-k\sigma r}
\sum_{i\in\mathbb{N}}|\lambda_i^k|^r
\int_{Q_i^k}|a_i^k(x)|^r\,dx\r]^\delta\noz\\
&\hs\ls\lf[\sum_{k=-\infty}^{k_0-1}2^{-k\sigma r}
\sum_{i\in\mathbb{N}}2^{kr}\lf|Q_i^k\r|^{\frac rp}
\lf|Q_i^k\r|^{(\frac 1r-\frac 1p)r}\r]^\delta\noz\\
&\hs\ls\sum_{k=-\infty}^{k_0-1}2^{kp}\lf(
\sum_{i\in\mathbb{N}}\lf|Q_i^k\r|\r)^\delta\sim\sum_
{k=-\infty}^{k_0-1}\lf[2^k\mu_k^\delta\r]^p,\noz
\end{align}
which is the desired estimate of
$\psi_{k_0}$ for $r\in(1,\fz)$ in \eqref{five16}.

For $r=\fz$, by \cite[Theorem 3.2]{blyz10} again,
we know that
\begin{align}\label{five41}
2^{k_0p}\lf[d_{\psi_{k_0}}(2^{k_0})\r]^\delta
&=2^{k_0p}\lf|\lf\{x\in\rn:\ \psi_{k_0}(x)
>2^{k_0}\r\}\r|^\delta\\
&\le2^{k_0(p-\delta\widetilde{r})}\lf\{\sum_{k=-\fz}^{k_0-1}\int_{\rn}
\lf[S\lf(\sum_{i\in\nn}\lambda_i^ka_i^k\r)(x)\r]^{\widetilde{r}}\,dx\r\}^\delta\noz\\
&\ls2^{k_0(p-\delta\widetilde{r})}
\lf\{\sum_{k=-\fz}^{k_0-1}\sum_{i\in\nn}\int_{x_i^k+B_{\ell_i^k}}
\lf|\lambda_i^ka_i^k\r|^{\widetilde{r}}(x)\,dx\r\}^\delta\noz\\
&\ls\sum_{k=-\infty}^{k_0-1}2^{kp}\lf(\sum_
{i\in\mathbb{N}}\lf|Q_i^k\r|\r)^\delta\sim\sum_
{k=-\infty}^{k_0-1}\lf[2^k\mu_k^\delta\r]^p,\noz
\end{align}
where $\widetilde{r}\in(1,\fz)$ satisfies that $\delta\widetilde{r}>p$, which,
combined with \eqref{five17}, implies the desired estimate of
$\psi_{k_0}$ in \eqref{five16}.

In order to estimate $\eta_{k_0}$, we claim that,
for all $i\in\mathbb{N}$ and $k\in\mathbb{Z}$,
\begin{align}\label{five18}
\int_\rn \lf[S(a_i^k)(x)\r]^{\delta p}\,dx\ls\lf|Q_i^k\r|^{1-\delta}.
\end{align}
Assume that \eqref{five18} holds true for the moment.
Then, by \eqref{five18}, we have
\begin{align}\label{five19}
\ \ \ 2^{k_0\delta p}d_{\eta_{k_0}}(2^{k_0})
&=2^{k_0\delta p}\lf|\lf\{x\in\rn:\ \lf[\sum_{k={k_0}}^\infty
\sum_{i\in\mathbb{N}}|\lambda_i^k|S(a_i^k)\r]^
{\delta p}(x)>2^{k_0\delta p}\r\}\r|\\
&\leq\int_\rn\lf[\sum_{k={k_0}}^\infty \sum_{i\in\mathbb{N}}|
\lambda_i^k|S(a_i^k)\r]^{\delta p}(x)\,dx\noz\\
&\leq\sum_{k={k_0}}^\infty\sum_{i\in\mathbb{N}}|\lambda_i^k|^
{\delta p}\int_\rn\lf[S(a_i^k)(x)\r]^{\delta p}\,dx\noz\\
&\ls\sum_{k={k_0}}^\infty\sum_{i\in\mathbb{N}}|\lambda_i^k|^
{\delta p}\lf|Q_i^k\r|^{1-\delta}\ls\sum_{k={k_0}}^\infty2^{k\delta p}
\sum_{i\in\mathbb{N}}\lf|Q_i^k\r|
\sim\sum_{k=k_0}^\infty\lf[2^{k\delta}\mu_k\r]^p\noz,
\end{align}
which is the desired estimate of
$\eta_{k_0}$ in \eqref{five16}.

Now we show \eqref{five18}. To this end, we write
\begin{align*}
\int_{\rn}\lf[S(a)(x)\r]^{\delta p}\,dx
&=\int_{x_{Q}+B_{v\ell(Q)+u+2\tau}}
\lf[S(a)(x)\r]^{\delta p}\,dx+
\int_{x_{Q}+(B_{v\ell(Q)+u+2\tau})^
\complement}\cdots\\
&=:{\rm I_1+I_2},
\end{align*}
where $a$ is a $(p,r,s)$-atom supported on the dyadic cube $Q$
and $v,\,u$ are as in Lemma \ref{fivel1}.
By the H\"{o}lder inequality, \cite[Theorem 3.2]{blyz10} again
and Lemma \ref{fivel1}(iv), we know that
\begin{align}\label{five20}
{\rm I_1}&\leq\lf\{\int_{x_{Q}+B_{v\ell(Q)+u+2\tau}}
\lf[S(a)(x)\r]^r\,dx\r\}^{\delta p/r}
\lf|B_{v\ell(Q)+u+2\tau}\r|^{1-\delta p/r}\\
&\ls\|a\|_{L^r(\rn)}^{\delta p}|Q|^{1-\delta p/r}
\ls|Q|^{\delta p(1/r-1/p)}|Q|^{1-\delta p/r}\sim|Q|^{1-\delta}.\noz
\end{align}
Noticing that $(1+\epsilon)\delta>1$, by \eqref{five7}
and Lemma \ref{fivel1}(iv) again, we find that
\begin{align*}
{\rm I_2}
&=\sum_{j=0}^\fz\int_{U_j}\lf[S(a)(x)\r]^{\delta p}\,dx\\
&\ls\sum_{j=0}^\fz(1-vj)^{\delta p/2}
b^{v\ell(Q)-vj}b^{(1+\epsilon)\delta vj}
b^{-\delta p\frac{v\ell(Q)}r}\|a\|^{\delta p}_{L^r(\rn)}\noz\\
&\ls\sum_{j=0}^\fz(1-vj)^{\delta p/2}
b^{v\ell(Q)-vj}b^{(1+\epsilon)\delta vj}b^{-\delta p
\frac{v\ell(Q)}r}b^{v\ell(Q)\delta p(\frac1r-\frac1p)}\noz\\
&\sim\sum_{j=0}^\fz b^{v\ell(Q)(1-\delta)}(1-vj)^
{\delta p/2}b^{[(1+\epsilon)\delta-1] vj}
\sim|Q|^{1-\delta},\noz
\end{align*}
where $U_j$ is as in \eqref{five21}, which, together with
\eqref{five20}, implies that \eqref{five18} holds true. This finishes
the proof of the case when $q/p\in[1,\fz]$.

\emph{Case 2:} $q/p\in(0,1)$. In this case, when $r\in(1,\fz)$,
similar to \eqref{five17}, we have
\begin{align}\label{five42}
2^{k_0p}\lf[d_{\psi_{k_0}}(2^{k_0})\r]^\delta
\ls\lf[\sum_{k=-\infty}^{k_0-1}2^{-k\sigma r}\sum_
{i\in\mathbb{N}}2^{kr}\lf|Q_i^k\r|^{\frac rp}\lf|Q_i^k\r|^
{(\frac 1r-\frac 1p)r}\r]^\delta\sim\lf(\sum_{k=-\infty}^
{k_0-1}2^{\frac{kp}\delta}\mu_k^p\r)^\delta.
\end{align}
By some calculations similar to \eqref{five41}, we easily know that
\eqref{five42} also holds true for $r=\fz$.
This further implies that
\begin{align}\label{five59}
\ \ \sum_{k_0\in\zz}2^{k_0q}\lf|\lf\{x\in\rn:\
\psi_{k_0}(x)>2^{k_0}\r\}\r|^{\frac qp}
&\ls\sum_{k_0\in\zz}2^{k_0(q-\frac q\delta)}
\sum_{k=-\infty}^{k_0-1}2^{\frac{kq}\delta}\mu_k^q\\
&\sim\sum_{k\in\zz}\sum_{k_0=k+1}^{\fz}
2^{k_0(q-\frac q\delta)}2^{\frac{kq}\delta}\mu_k^q
\ls\sum_{k\in\zz}2^{kq}\mu_k^q.\noz
\end{align}
On the other hand, similar to \eqref{five19}, we deduce that
\begin{align*}
2^{k_0\delta p}d_{\eta_{k_0}}(2^{k_0})
\ls\sum_{k=k_0}^\infty\lf[2^{k\delta}\mu_k\r]^p,
\end{align*}
which implies that
\begin{align*}
&2^{k_0\delta p}\lf|\lf\{x\in\rn:\
\eta_{k_0}(x)>2^{k_0}\r\}\r|\\
&\hs\ls\sum_{k=k_0}^\fz2^{-k\widetilde{\delta}p}
\lf[2^{k(1-\widetilde{\delta})}\mu_k\r]^p
\ls2^{-k_0\widetilde{\delta}p}\lf\{\sum_{k=k_0}^\fz
\lf[2^{k(1-\widetilde{\delta})}\mu_k\r]^q\r\}^{\frac pq},
\end{align*}
where $\widetilde{\delta}:=\frac{1-\delta}2$. Therefore,
\begin{align}\label{five60}
\sum_{k_0\in\zz}2^{k_0q}\lf|\lf\{x\in\rn:\
\eta_{k_0}(x)>2^{k_0}\r\}\r|^{\frac qp}
&\ls\sum_{k_0\in\zz}2^{k_0\widetilde{\delta}q}\sum_
{k=k_0}^\fz\lf[2^{k(1-\widetilde{\delta})}\mu_k\r]^q\\
&\sim\sum_{k\in\zz}\lf[2^{k(1-\widetilde{\delta})}\mu_k\r]^q
\sum_{k_0=-\fz}^k2^{k_0\widetilde{\delta}q}
\ls\sum_{k\in\zz}2^{kq}\mu_k^q.\noz
\end{align}
Notice that
$\mu_k:=(\sum_{i\in\mathbb{N}}|Q_i^k|)^{1/p}$
and $\lambda_i^k\sim2^k|Q_i^k|^{1/p}$.
Combining \eqref{se6}, \eqref{five59}, \eqref{five60}
and \eqref{five61}, we further conclude that
\begin{align*}
\|S(f)\|_{L^{p,q}(\rn)}^q
&\sim\sum_{k_0\in\zz}2^{k_0q}
\lf|\lf\{x\in\rn:\ S(f)(x)>2^{k_0}\r\}\r|^{\frac qp}\\
&\ls\sum_{k_0\in\zz}2^{k_0q}\lf|\lf\{x\in\rn:\
\psi_{k_0}(x)>2^{k_0}\r\}\r|^{\frac qp}\\
&\hs+\sum_{k_0\in\zz}2^{k_0q}\lf|\lf\{x\in\rn:\
\eta_{k_0}(x)>2^{k_0}\r\}\r|^{\frac qp}\noz\\
&\ls\sum_{k\in\zz}2^{kq}\mu_k^q\sim\sum_{k\in\zz}
\lf[\sum_{i\in\nn}|\lambda_i^k|^p\r]^{\frac qp}
\sim\|f\|_{H_A^{p,q}(\rn)}^q,
\end{align*}
which implies that $S(f)\in L^{p,q}(\rn)$ and
$\lf\|S(f)\r\|_{L^{p,q}(\rn)}\ls\|f\|_{H_A^{p,q}(\rn)}$.
This finishes the proof of Case 2 and hence the proof of
the necessity of Theorem \ref{fivet1}.

Now we show the sufficiency of Theorem \ref{fivet1},
namely, to show that, if $f\in\cs'_0(\rn)$ and $S(f)\in L^{p,q}(\rn)$,
then $f\in H_A^{p,q}(\rn)$ and
$$\|f\|_{H_A^{p,q}(\rn)}\ls\|S(f)\|_{L^{p,q}(\rn)}.$$
We prove this by six steps.

\emph{Step (i).} For each $k\in\mathbb{Z}$, let
$\Omega_k:=\{x\in\rn:\ S(f)(x)>2^k\}$ and
$$\mathcal{Q}_k:=\lf\{Q\in\mathcal{Q}:
\ |Q\cap\Omega_k|>\frac{|Q|}2\ \ {\rm and}\
\ |Q\cap\Omega_{k+1}|\leq\frac{|Q|}2\r\}.$$
Obviously, for any $Q\in\mathcal{Q}$,
there exists a unique $k\in\mathbb{Z}$
such that $Q\in\mathcal{Q}_k$.
We denote the set of all \emph{maximal dyadic cubes}
in $\mathcal{Q}_k$ by $\{Q_i^k\}_i$,
namely, there exist no $Q\in\mathcal{Q}_k$
such that $Q_i^k\subsetneqq Q$ for any $i$.

For any $Q\in\mathcal{Q}$, let
$$\widehat{Q}:=\lf\{(y,t)\in\rn\times\mathbb{R}:\
y\in Q,\,t\sim v\ell(Q)+u\r\},$$
here and hereafter, $t\sim v\ell(Q)+u$ always means
\begin{align}\label{five23}
v\ell(Q)+u+\tau\leq t<v[\ell(Q)-1]+u+\tau,
\end{align}
where $u,\,v$ are as in Lemma \ref{fivel1} and $\ell(Q)$ is the level of $Q$.
Observe that, in the above inequality, $v$ is negative. Clearly,
$\{\widehat{Q}\}_{Q\in\mathcal{Q}}$ are mutually disjoint and
\begin{align}\label{five24}
\rn\times\mathbb{R}=\bigcup_{k\in\mathbb{Z}}\bigcup_i B_{k,\,i},
\end{align}
where, for any $k\in\mathbb{Z}$ and $i$, let
$B_{k,\,i}:=\bigcup_{Q\subset Q_i^k,\,Q\in\mathcal{Q}_k}\widehat{Q}$.
It is easy to see that $\{B_{k,i}\}_{k\in\zz,\,i}$
are mutually disjoint by Lemma \ref{fivel1}(ii).

Let $\psi$ and $\theta$ be as in Lemma \ref{fivel2}. Then
$\theta$ has the vanishing moments up to order $s$ with
$s\geq\lfloor(1/p-1)\ln b/\ln\lambda_-\rfloor$. By
Lemma \ref{fivel2}, the properties of the tempered distributions
(see \cite[Theorem 2.3.20]{lg08} or \cite[Theorem 3.13]{sw71}) and \eqref{five24},
we find that, for all $f\in\cs'_0(\rn)$ with
$S(f)\in L^{p,q}(\rn)$, and $x\in\rn$,
\begin{align*}
f(x)
&=\sum_{k\in\mathbb{Z}}f\ast\psi_k\ast\theta_k(x)
=\int_{\rn\times\mathbb{R}}
f\ast\psi_t(y)\ast\theta_t(x-y)\,dy\,dm(t)\\
&=\sum_{k\in\mathbb{Z}}\sum_i\int_{B_{k,\,i}}
f\ast\psi_t(y)\ast\theta_t(x-y)\,dy\,dm(t)
=:\sum_{k\in\mathbb{Z}}\sum_i h_i^k(x)
\end{align*}
in $\cs'(\rn)$, where, for each $k\in\mathbb{Z},\,i$ and $x\in\rn$,
\begin{align}\label{five25}
h_i^k(x)
:=&\int_{B_{k,\,i}}f\ast\psi_t(y)\ast\theta_t(x-y)\,dy\,dm(t)\\
=&\sum_{Q\subset Q_i^k,\,Q\in\mathcal{Q}_k}
\int_{\widehat{Q}}f\ast\psi_t(y)\ast\theta_t(x-y)\,dy\,dm(t)
=:\sum_{Q\subset Q_i^k,\,Q\in\mathcal{Q}_k}e_{Q}(x)\noz
\end{align}
with convergence in $\cs'(\rn)$, and $m(t)$ is
the \emph{counting measure} on $\mathbb{R}$.

\emph{Step (ii).} In this step, we prove that,
for all $x\in\rn$,
\begin{align}\label{five27}
\lf[S\lf(\sum_{Q\in\mathcal{R}}e_Q\r)(x)\r]^2\ls
\sum_{Q\in\mathcal{R}}\lf[M_{{\rm HL}}(c_Q\chi_Q)(x)\r]^2,
\end{align}
where $\mathcal{R}$ is any set of dyadic cubes in
$\rn$, $e_Q$ is as in \eqref{five25}
and, for each $Q\in\mathcal{R}$,
$$c_Q:=\lf[\int_{\widehat{Q}}|\psi_t\ast f(y)|^2
\,dy\frac{dm(t)}{b^{t}}\r]^{1/2}.$$
Indeed,
we can prove \eqref{five27} via borrowing some ideas from
the proof of \cite[Lemma 4.7]{blyz10}. For each $Q\in\mathcal{Q}$,
let $\widetilde{Q}:=c_Q+B_{v[\ell(Q)-1]+u+2\tau}$,
where $c_Q$ is as in \eqref{five27}. By repeating the proof of
\cite[Lemma 4.7, pp.\,412-413]{blyz10}, we find that,
for all $x\in\rn$,
\begin{align*}
\lf[S\lf(\sum_{Q\in\mathcal{R}}e_Q\r)(x)\r]^2
\ls\sum_{Q\in\mathcal{R}}(c_Q)^2
[M_{{\rm HL}}(\chi_Q)(x)]^2\lf\{\sum_{R\in\mathcal{Q},\,
\widetilde{Q}\cap\widetilde{R}\neq\emptyset}b^
{(s+1)v|\ell(Q)-\ell(R)|\frac{\ln(\lambda_-)}{\ln b}}\r\}.
\end{align*}
By this and the following estimate from \cite[p.\,295]{lfy14}:
$$\sum_{R\in\mathcal{Q},\,\widetilde{Q}\cap
\widetilde{R}\neq\emptyset}b^{(s+1)v
|\ell(Q)-\ell(R)|\frac{\ln(\lambda_-)}{\ln b}}\ls1$$
(see also \cite[(4.18)]{blyz10} for the case of two parameters), we obtain \eqref{five27}.

Next we show that, for each $k\in\zz$ and $i$,
$h_i^k$ is a multiple of a $(p,r,s)$-atom. This is completed
by Step (iii) through (v) below.

\emph{Step (iii).}
For all $x\in\supp h_i^k$, by \eqref{five25}, $h_i^k(x)\neq0$
implies that there exists $Q\subset Q_i^k$ and
$Q\in\mathcal{Q}_k$ such that $e_{Q}(x)\neq0$.
Then there exists $(y,t)\in\widehat{Q}$ such that
$A^{-t}(x-y)\in B_0$. By this, Lemma \ref{fivel1}(iv),
\eqref{five23} and \eqref{se1}, we have
$$x\in y+B_t\subset x_Q+B_{v\ell(Q)+u}+B_{v[\ell(Q)-1]
+u+\tau}\subset x_Q+B_{v[\ell(Q)-1]+u+2\tau}.$$
Therefore,
$$\supp e_Q\subset x_Q+B_{v[\ell(Q)-1]+u+2\tau}.$$
By this,
$h_i^k=\sum_{Q\subset Q_i^k,\,Q\in\mathcal{Q}_k}e_{Q}$,
(ii) and (iv) of Lemma \ref{fivel1} and \eqref{se1},
we further conclude that
\begin{align}\label{five26}
\supp h_i^k
&\subset\bigcup_{Q\subset Q_i^k,\,Q\in\mathcal
{Q}_k}x_Q+B_{v[\ell(Q)-1]+u+2\tau}\\
&\subset x_{Q_i^k}+B_{v\ell(Q_i^k)+u}+
B_{v[\ell(Q_i^k)-1]+u+2\tau}\noz\\
&\subset x_{Q_i^k}+B_{v[\ell(Q_i^k)-1]+
u+3\tau}=:B_i^k.\noz
\end{align}

\emph{Step (iv).}
Notice that, for each $Q\in\mathcal{Q}_k$ and
$x\in Q$, by Lemma \ref{fivel1}(iv), we know that
$$M_{{\rm HL}}(\chi_{\Omega_k})(x)\geq\frac1{b^{v\ell(Q)+u}}
\int_{x_Q+B_{v\ell(Q)+u}}\chi_{\Omega_k}(z)\,dz>b^{-2u}
\frac{|\Omega_k\cap Q|}{|Q|}>\frac12b^{-2u},$$
which further implies that
\begin{align}\label{five28}
\bigcup_{Q\subset Q_i^k,\,Q\in\mathcal{Q}_k}Q
\subset\widehat{\Omega}_k:=\lf\{x\in\rn:\
M_{{\rm HL}}(\chi_{\Omega_k})(x)>\frac12b^{-2u}\r\}.
\end{align}
Moreover, for all $Q\in\mathcal{Q}_k$ and $x\in Q$,
by Lemma \ref{fivel1}(iv) and
$Q\subset\widehat{\Omega}_k$, we obtain
$$M_{{\rm HL}}\lf(\chi_{Q\cap(\widehat{\Omega}_k
\setminus\Omega_{k+1})}\r)(x)
\gs\frac1{|Q|}\int_{Q}\chi_{\widehat{\Omega}_k
\setminus\Omega_{k+1}}(z)\,dz
\gs\frac{|Q|-|Q|/2}{|Q|}\gs\frac{\chi_Q(x)}2.$$
By this, \cite[Theorem 3.2]{blyz10}, \eqref{five27}
and \cite[Theorem 2.5]{bh06},
we find that, for $r\in(1,\fz)$,
\begin{align}\label{five29}
\lf\|\sum_{Q\subset Q_i^k,\,Q\in\mathcal{Q}_k}
e_{Q}\r\|_{L^r(\rn)}
&\ls\lf\|S\lf(\sum_{Q\subset Q_i^k,\,Q\in\mathcal{Q}_k}
e_{Q}\r)\r\|_{L^r(\rn)}\\
&\ls\lf\|\lf\{\sum_{Q\subset Q_i^k,\,Q\in\mathcal{Q}_k}
\lf[M_{{\rm HL}}(c_Q\chi_Q)\r]^2\r\}^{1/2}\r\|_{L^r(\rn)}\noz\\
&\ls\lf\|\lf[\sum_{Q\subset Q_i^k,\,Q\in\mathcal{Q}_k}
(c_Q)^2\chi_Q\r]^{1/2}\r\|_{L^r(\rn)}\noz\\
&\ls\lf\|\lf\{\sum_{Q\subset Q_i^k,\,Q\in\mathcal{Q}_k}
\lf[M_{{\rm HL}}\lf(c_Q\chi_{Q\cap(\widehat{\Omega}_k
\setminus\Omega_{k+1})}\r)\r]^2
\r\}^{1/2}\r\|_{L^r(\rn)}\noz\\
&\ls\lf\|\lf[\sum_{Q\subset Q_i^k,\,Q\in\mathcal{Q}_k}
\lf(c_Q\r)^2\chi_{Q\cap(\widehat{\Omega}_k
\setminus\Omega_{k+1})}\r]^{1/2}\r\|_{L^r(\rn)}.\noz
\end{align}

On the other hand, for any $Q\in\mathcal{Q}_k,\,x\in Q$
and $(y,t)\in\widehat{Q}$, by Lemma
\ref{fivel1}(iv), \eqref{se1} and \eqref{five23},
we have
$$x-y\in B_{v\ell(Q)+u}+B_{v\ell(Q)+u}
\subset B_{v\ell(Q)+u+\tau}\subset B_t,$$
which, combined with the disjointness of
$\{\widehat{Q}\}_{Q\subset Q_i^k}$,
further implies that
\begin{align}\label{five30}
&\sum_{Q\subset Q_i^k,\,Q\in\mathcal{Q}_k}
\lf(c_Q\r)^2\chi_{Q\cap(\widehat{\Omega}_k
\setminus\Omega_{k+1})}(x)\\
&\hs=\sum_{Q\subset Q_i^k,\,Q\in\mathcal{Q}_k}
\int_{\widehat{Q}}
|\psi_t\ast f(y)|^2\,dy\frac{dm(t)}{b^{t}}
\chi_{Q\cap(\widehat{\Omega}_k
\setminus\Omega_{k+1})}(x)\noz\\
&\hs\ls\lf[S(f)(x)\r]^2\chi_{Q_i^k\cap(\widehat
{\Omega}_k\setminus\Omega_{k+1})}(x).\noz
\end{align}
By the definition of $\widehat{\Omega}_k$
(see \eqref{five28}), we know that, for all $r\in(1,\fz)$,
\begin{align*}
|\widehat{\Omega}_k|\leq(2b^{2u})^r\int_{\rn}
\lf[M_{{\rm HL}}(\chi_{\Omega_k})(x)\r]^r\,dx
\ls|\Omega_k|,
\end{align*}
which, together with \eqref{five30}, implies that
\begin{align}\label{five33}
&\lf\|\lf\{\sum_{Q
\subset Q_i^k,\,Q\in\mathcal{Q}_k}
\lf(c_Q\r)^2\chi_{Q\cap(\widehat{\Omega}_k
\setminus\Omega_{k+1})}\r\}^
{\frac12}\r\|^r_{L^r(\rn)}\\
&\hs\leq\int_{\rn}
\lf[\chi_{Q_i^k\cap(\widehat{\Omega}_k
\setminus\Omega_{k+1})}(x)
\int_{\bigcup_{Q\subset Q_i^k,\,
Q\in\mathcal{Q}_k}\widehat{Q}}
|\psi_t\ast f(y)|^2\,dy\frac{dm(t)}
{b^{t}}\r]^{r/2}\,dx\noz\\
&\hs\ls 2^{kr}\lf|\widehat{\Omega}_k\r|
\ls2^{kr}|\Omega_k|<\fz.\noz
\end{align}
For any $N\in\mathbb{N}$, let
$\mathcal{Q}_{k,\,N}:=
\{Q\in\mathcal{Q}_k:\ |\ell(Q)|>N\}$.
Notice that, if we replace
$\sum_{Q\subset Q_i^k,\,Q\in\mathcal{Q}_k}e_{Q}$
by $\sum_{Q\subset Q_i^k,\,Q\in\mathcal{Q}_{k,N}}e_Q$
in \eqref{five29}, then
\begin{align*}
&\lf\|\sum_{Q\subset Q_i^k,\,Q\in
\mathcal{Q}_{k,\,N}}e_{Q}\r\|_{L^r(\rn)}^r\\
&\hs\ls\lf\|\lf[\sum_{Q\subset Q_i^k,\,Q\in
\mathcal{Q}_{k,\,N}}\lf(c_Q\r)^2\chi_{Q\cap
(\widehat{\Omega}_k\setminus\Omega_{k+1})}
\r]^{1/2}\r\|_{L^r(\rn)}^r\noz\\
&\hs\ls\int_{\rn}\chi_{Q_i^k\cap
(\widehat{\Omega}_k\setminus\Omega_{k+1})}(x)
\lf[\int_{\bigcup_{Q\subset Q_i^k,\,Q\in
\mathcal{Q}_{k,\,N}}\widehat{Q}}|\psi_t\ast f(y)|^2\,dy
\frac{dm(t)}{b^{t}}\r]^{r/2}\,dx.\noz
\end{align*}
Thus, by \eqref{five33} and the
Lebesgue dominated convergence theorem, we conclude that
$$\lf\|\sum_{Q\subset Q_i^k,\,Q\in
\mathcal{Q}_{k,\,N}}e_{Q}\r\|_{L^r(\rn)}\rightarrow0$$
as $N\rightarrow\fz$, which implies that
$h_i^k=\sum_{Q\subset Q_i^k,\,Q\in\mathcal{Q}_k}e_{Q}$
in $L^r(\rn)$. By this, combined with \eqref{five29},
\eqref{five30}, the definition of $B_i^k$ (see \eqref{five26})
and Lemma \ref{fivel1}(iv), we further find that
\begin{align}\label{five31}
\|h_i^k\|_{L^r(\rn)}&\ls\lf\{\int_{\rn}\lf[S(f)(x)\r]^r
\chi_{Q_i^k\cap(\widehat{\Omega}_k
\setminus\Omega_{k+1})}(x)\,dx\r\}^{1/r}\\
&\ls2^k|Q_i^k|^{1/r}
\le C_62^k|B_i^k|^{1/r},\noz
\end{align}
where $C_6$ is a positive constant
independent of $f$.

\emph{Step (v).}
Recall that $\theta$ has the vanishing moments up to
$s\geq\lfloor(1/p-1)\ln b/\ln\lambda_-\rfloor$
and so is $e_Q$. For all
$k\in\mathbb{Z},\,i$, $\gamma\in\zz_+^n$ with
$|\gamma|\leq s$ and $x\in\rn$,
let $g(x):=x^\gamma\chi_{B_i^k}(x)$ with $B_i^k$ being as in
\eqref{five26}, and $r'\in(1,\fz)$ such that $1/r+1/r'=1$.
Clearly, $g\in L^{r'}(\rn)$.
Therefore, by the fact that
$(L^{r'}(\rn))^\ast=L^r(\rn)$, \eqref{five26} and
$$\supp e_Q\subset x_Q+
B_{v[\ell(Q)-1]+u+2\tau}\subset B_i^k,$$
we further have
\begin{align}\label{five34}
\int_{\rn}h_i^k(x)x^\gamma\,dx
=\langle h_i^k,g\rangle
&=\sum_{Q\subset Q_i^k,\,Q\in
\mathcal{Q}_k}\langle e_{Q},g\rangle\\
&=\sum_{Q\subset Q_i^k,\,Q\in\mathcal{Q}_k}
\int_{\rn}e_{Q}(x)x^\gamma\,dx=0.\noz
\end{align}
Namely, $h_i^k$ has the vanishing moments up to $s$,
which, together with \eqref{five26} and \eqref{five31},
implies that $h_i^k$ is a $(p,r,s)$-atom modulus a constant
supported on $B_i^k$.

\emph{Step (vi).}
For all $k\in\mathbb{Z}$ and $i$,
let $\lambda_i^k:=C_62^k|B_i^k|^{1/p}$ and
$a_i^k:=(\lambda_i^k)^{-1}h_i^k$, where
$C_6$ is a positive constant as in \eqref{five31}.
Then we have
$$f=\sum_{k\in\mathbb{Z}}\sum_i h_i^k=
\sum_{k\in\mathbb{Z}}\sum_i \lambda_i^k a_i^k\qquad {\rm in}\quad \cs'(\rn).$$
By \eqref{five26} and \eqref{five34},
we find that $\supp a_i^k\subset B_i^k$ and
$a_i^k$ also has the vanishing moments up to $s$.
By \eqref{five31} and Lemma \ref{fivel1}(iv),
we conclude that
$\|a_i^k\|_{L^r(\rn)}\leq|B_i^k|^{1/r-1/p}$.
Thus, $a_i^k$ is a $(p,r,s)$-atom
for all $k\in\mathbb{Z}$ and $i$.

By the mutual disjointness of $\{Q_i^k\}_{k\in\mathbb{Z},\,i}$,
Lemma \ref{fivel1}(iv) again and \eqref{se6},
we know that
\begin{align*}
\sum_{k\in\mathbb{Z}}
\lf(\sum_i |\lambda_i^k|^p\r)^{\frac qp}
&\sim\sum_{k\in\mathbb{Z}}
\lf(\sum_i 2^{kp}\lf|B_i^k\r|\r)^{\frac qp}\\
&\sim \sum_{k\in\mathbb{Z}}2^{kq}
\lf(\sum_i\lf|Q_i^k\r|\r)^{\frac qp}\ls
\sum_{k\in\mathbb{Z}}2^{kq}|\Omega_k|^
{\frac qp}\sim\|S(f)\|^q_{L^{p,q}(\rn)},
\end{align*}
which implies that $f\in H_A^{p,q}(\rn)$ and
$\|f\|_{H_A^{p,q}(\rn)}\ls\|S(f)\|_{L^{p,q}(\rn)}$.
This finishes the proof of Theorem \ref{fivet1}.
\end{proof}

\section{Proofs of Theorems \ref{fivet2} and \ref{fivet3}\label{s3}}

\hskip\parindent
In this section, via first establishing
an anisotropic Fefferman-Stein vector-valued inequality in
$L^{p,q}(\rn)$ and the Calder\'{o}n reproducing formula,
we give out the proofs of Theorems \ref{fivet2} and \ref{fivet3}.

We begin with recalling some notation and
establishing several technical lemmas.
Suppose that $A$ is a dilation on $\rn$.
For each $j\in\mathbb{Z}$ and $k\in\mathbb{Z}^n$,
define $Q_{j,\,k}:=A^{-j}([0,1)^n+k)$,
$\mathcal{Q}_j:=\{Q_{j,\,k}:\ k\in\mathbb{Z}^n\}$
and
$\widehat{\mathcal{Q}}:=
\bigcup_{j\in\mathbb{Z}}\mathcal{Q}_j$.
Recall that $Q_{j,\,k}$ with $j\in\mathbb{Z}$
and
$k\in\mathbb{Z}^n$ is called a \emph{dilated cube}
(see, for example, \cite[p.\,1475]{bh06}). Clearly,
for any $k_1,\,k_2\in\mathbb{Z}^n$ with $k_1\neq k_2$,
$|Q_{j,\,k_1}\cap Q_{j,\,k_2}|=0$.

Throughout this article, for each $r\in(0,\fz)$ and $x\in\rn$,
let
$$B_\rho(x,\,r):=\{y\in\rn:\ \rho(x-y)<r\}.$$
For each dilated cube $Q_{j,\,k}$,
denote its \emph{center} by $c_{Q_{j,\,k}}$
and its
\emph{lower-left corner} $A^{-j}k$ by $x_{Q_{j,\,k}}$.
Via \cite[Lemma 2.9(a)]{mb07}, we know that
there exists a positive integer $j_0:=j_{(A,\,n)}$,
only depending on $A$ and $n$, such that,
for each $x\in Q_{j,\,k}$,
\begin{align}\label{five35}
B_\rho(c_{Q_{j,\,k}}, b^{-j_0-j})
\subset Q_{j,\,k}\subset B_\rho(x, b^{j_0-j}).
\end{align}
For any $\varphi\in\cs(\rn)$ and $x\in\rn$, define
\begin{align}\label{five36}
\varphi_{Q}(x):=|\det A|^{j/2}\varphi(A^jx-k)
=|Q|^{1/2}\varphi_{-j}(x-x_Q),
\end{align}
where $Q:=Q_{j,\,k}\in\widehat{\mathcal{Q}}$
and $x_Q:=x_{Q_{j,\,k}}$.

Observe that $(\rn,\,\rho,\,dx)$ is a space
of homogeneous type in the sense of
Coifman and Weiss \cite{cw71,cw77}. From this and
\cite[Theorem 1.2]{gly09}, we deduce the following lemma,
which is an anisotropic version of \cite[Theorem 1]{fs71}.

\begin{lemma}\label{fivel3}
Let $r\in(1,\fz]$ and $M_{{\rm HL}}$
be the Hardy-Littlewood maximal function
defined by \eqref{te58}.
\begin{enumerate}
\item[{\rm(i)}]
If $p\in(1,\fz)$, then there exists a positive constant $C_7$ such that,
for all sequences $\{f_k\}_k$ of measurable functions,
\begin{align*}
\lf\|\lf\{\sum_{k}\lf[M_{{\rm HL}}(f_k)\r]^r\r\}^
{1/r}\r\|_{L^p(\rn)}\leq C_7\lf\|\lf[\sum_{k}
|f_k|^r\r]^{1/r}\r\|_{L^p(\rn)};
\end{align*}
\item[{\rm(ii)}]
It holds true that there exists a positive constant $C_8$ such that,
for all sequences $\{f_k\}_k$ of measurable functions,
\begin{align*}
\lf\|\lf\{\sum_{k}\lf[M_{{\rm HL}}(f_k)\r]^r\r\}^
{1/r}\r\|_{L^{1,\fz}(\rn)}\leq C_8\lf\|\lf[\sum_{k}
|f_k|^r\r]^{1/r}\r\|_{L^1(\rn)}.
\end{align*}
\end{enumerate}
\end{lemma}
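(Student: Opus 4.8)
The statement is the anisotropic Fefferman--Stein vector-valued maximal inequality in the Lebesgue scale (both the strong $L^p$ bound for $p\in(1,\fz)$ and the weak $(1,1)$ bound), which, as indicated in the excerpt, is nothing but a specialization of the general vector-valued maximal inequality on spaces of homogeneous type. Accordingly, the plan is to invoke the structural fact, already recorded in the excerpt, that $(\rn,\rho,dx)$ is a space of homogeneous type in the sense of Coifman--Weiss, and then to apply the known vector-valued maximal theorem on such spaces, namely \cite[Theorem 1.2]{gly09} (the anisotropic analogue of Fefferman--Stein \cite{fs71}). Concretely, the Hardy--Littlewood maximal operator $M_{{\rm HL}}$ defined by \eqref{te58} is, up to the doubling constant built into $\rho$, the centered (equivalently, by \eqref{te58}, the uncentered over dilated balls) Hardy--Littlewood maximal operator associated with the quasi-metric measure space $(\rn,\rho,dx)$, so the hypotheses of the cited theorem are met.

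\textbf{Key steps.} First I would verify that $(\rn,\rho,dx)$ is a space of homogeneous type: the quasi-triangle inequality for $\rho$ is Definition \ref{sd2}(iii), and the doubling property follows from $|B_k|=b^k$ together with $B_k\subset rB_k\subset B_{k+1}$, so that $|B_\rho(x,2r)|\lesssim|B_\rho(x,r)|$ uniformly in $x$ and $r$; equivalently one uses \eqref{se1}. Second, I would observe that the operator $M_{{\rm HL}}$ in \eqref{te58}, taken as a supremum over all dilated balls $B\in\mathfrak B$ containing $x$, is pointwise comparable to the Hardy--Littlewood maximal operator of the homogeneous space $(\rn,\rho,dx)$; this comparability is standard and uses only \eqref{se19} and the fact that dilated balls $x+B_k$ and $\rho$-balls $B_\rho(x,b^k)$ coincide by the definition of the step homogeneous quasi-norm. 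Third, with these two reductions in hand, part (i) is exactly the conclusion of \cite[Theorem 1.2]{gly09} for $1<r\le\fz$ and $1<p<\fz$, and part (ii) is the endpoint weak-type $(1,1)$ assertion of the same theorem (the case $p=1$); one only needs to note that for $r=\fz$ the left-hand sides reduce to $\|\sup_k M_{{\rm HL}}(f_k)\|$ and $M_{{\rm HL}}(\sup_k|f_k|)\ge\sup_k M_{{\rm HL}}(f_k)$, so those cases follow from the scalar Hardy--Littlewood maximal theorem on $(\rn,\rho,dx)$, i.e.\ from \eqref{te56} and \eqref{te57} in Proposition \ref{tl4} via Remark \ref{tr1}. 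The constants $C_7$ and $C_8$ are then the ones furnished by \cite[Theorem 1.2]{gly09} (and, in the $r=\fz$ case, by Proposition \ref{tl4}).

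\textbf{Main obstacle.} There is little genuine difficulty here: the essential content has been outsourced to \cite{gly09}, and the only thing that requires care is the bookkeeping that identifies our concrete anisotropic maximal operator $M_{{\rm HL}}$ of \eqref{te58} with the abstract Hardy--Littlewood maximal operator of the homogeneous space $(\rn,\rho,dx)$, including checking that the comparability constants are independent of the sequence $\{f_k\}_k$ and of the index $r$. In particular one should make sure that the uncentered supremum over $\mathfrak B$ in \eqref{te58} and the centered supremum over $\rho$-balls differ only by the doubling constant $b^{\text{const}\,\tau}$, so that the vector-valued bound transfers verbatim; this is routine but is the one place where the anisotropy (through the parameter $\tau$ and the bounds \eqref{se19}) enters. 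I would therefore present the proof as a short paragraph doing exactly this identification and then quoting \cite[Theorem 1.2]{gly09}.
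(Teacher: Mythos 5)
Your proposal matches the paper's approach exactly: the paper deduces Lemma \ref{fivel3} by observing that $(\rn,\rho,dx)$ is a space of homogeneous type in the sense of Coifman and Weiss and then citing \cite[Theorem 1.2]{gly09}, precisely as you do. Your additional remarks on the $r=\infty$ case and the comparability of $M_{{\rm HL}}$ with the abstract maximal operator are routine bookkeeping that the paper leaves implicit, and they are all correct.
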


The following lemma is just
\cite[Lemma 3.3]{lfy14}.
\begin{lemma}\label{fivel4}
Suppose that
$\ell,\,M\in\mathbb{N}$ and $\varphi,\,\psi\in\cs(\rn)$
which have the vanishing moments up to order $\ell$.
Then there exists a positive constant
$C_{(\ell,\,M)}$, depending on $\ell$ and $M$,
such that, for any
$i,\,j\in\mathbb{Z}$ with $i\geq j$ and $x\in\rn$,
$$\lf|\varphi_{-i}\ast\psi_{-j}(x)\r|\leq
C_{(\ell,\,M)}b^{j-(i-j)(\ell+1)\zeta_-}
\lf[1+\rho(A^jx)\r]^{-M},$$
where $\zeta_-:=\ln(\lambda_-)/\ln b$.
\end{lemma}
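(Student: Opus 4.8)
The plan is to reduce the estimate, by a change of variables, to an almost-orthogonality bound for the unnormalized bump functions, and then to exploit the vanishing moments of $\varphi$ together with a Taylor expansion of $\psi$ and the dilation estimate \eqref{se19}. The case $i=j$ is immediate, since the convolution of two Schwartz functions is again a Schwartz function with the correct scaling, so I would assume $i>j$, that is, $j-i\in\zz\setminus\zz_+$. Writing $y=A^{-i}z$ and recalling that $\varphi_{-i}(\cdot)=b^i\varphi(A^i\cdot)$ and $\psi_{-j}(\cdot)=b^j\psi(A^j\cdot)$, one obtains
\begin{align*}
\varphi_{-i}\ast\psi_{-j}(x)=b^j\int_{\rn}\varphi(z)\,\psi\lf(A^jx-A^{j-i}z\r)\,dz.
\end{align*}
Setting $w:=A^jx$, I would then use that $\varphi$ has vanishing moments up to order $\ell$, together with the fact that, for $|\bz|\le\ell$, the map $z\mapsto\lf(A^{j-i}z\r)^\bz$ is a polynomial in $z$ of degree at most $\ell$, to subtract the degree-$\ell$ Taylor polynomial of $\psi$ centered at $w$ and arrive at
\begin{align*}
\varphi_{-i}\ast\psi_{-j}(x)=b^j\int_{\rn}\varphi(z)\lf[\psi\lf(w-A^{j-i}z\r)-\sum_{|\bz|\le\ell}\frac{\pa^\bz\psi(w)}{\bz!}\lf(-A^{j-i}z\r)^\bz\r]\,dz.
\end{align*}

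Next I would apply Taylor's remainder theorem, which bounds the bracket above by a constant depending only on $\ell$ and $n$ times $|A^{j-i}z|^{\ell+1}\sup_{|\xi|\le|A^{j-i}z|}\max_{|\bz|=\ell+1}\lf|\pa^\bz\psi(w-\xi)\r|$. Since $j-i\in\zz\setminus\zz_+$, \eqref{se19} gives $|A^{j-i}z|\ls(\lambda_-)^{j-i}|z|$, whence, using $\lambda_-=b^{\zeta_-}$,
\begin{align*}
|A^{j-i}z|^{\ell+1}\ls b^{(j-i)(\ell+1)\zeta_-}|z|^{\ell+1}=b^{-(i-j)(\ell+1)\zeta_-}|z|^{\ell+1}.
\end{align*}
As $\psi\in\cs(\rn)$, for every large $L$ one has $\lf|\pa^\bz\psi(v)\r|\ls[1+\rho(v)]^{-L}$; the remaining work is to convert the decay in $\rho(w-\xi)$ into decay in $\rho(w)$, after which integrating the rapidly decreasing factor $|\varphi(z)|\,|z|^{\ell+1}$ in $z$ and multiplying by $b^j$ yields exactly $b^{\,j-(i-j)(\ell+1)\zeta_-}[1+\rho(A^jx)]^{-M}$.

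The step I expect to be the main obstacle is precisely this last conversion in the anisotropic setting: when $\rho(w)$ is small, the admissible shifts $\xi$ can be large relative to $w$, so a naive use of the quasi-triangle inequality for $\rho$ (Definition \ref{sd2}(iii)) does not suffice. I would handle it by splitting the $z$-integral into the region where $\rho(z)$, hence also $\rho(\xi)$ (recall $|\xi|\ls|z|$), is suitably smaller than $\rho(w)$, so that $\rho(w-\xi)\gs\rho(w)$ and $\int_{\rn}|\varphi(z)|\,|z|^{\ell+1}\,dz<\fz$ closes the estimate; and the complementary region, where one discards the factor involving $\rho(w-\xi)$ and instead invokes the rapid decay $|\varphi(z)|\ls[1+\rho(z)]^{-L}$ with $L$ chosen large enough, relative to $M$, $\ell$ and $n$, to recover $[1+\rho(w)]^{-M}$. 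I also note that only the vanishing moments of $\varphi$ are used in this direction of the estimate; the hypothesis on $\psi$ is needed only for the companion bound in which the roles of $i$ and $j$ are interchanged.
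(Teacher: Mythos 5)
The paper does not contain its own proof of this lemma: it is quoted directly from \cite[Lemma 3.3]{lfy14}, so there is no internal argument to compare against. Your plan is the standard almost-orthogonality estimate and is essentially sound: after the change of variables, the vanishing moments of $\varphi$ up to order $\ell$ let you subtract the degree-$\ell$ Taylor polynomial of $\psi$ at $w=A^jx$, the remainder is bounded by $|A^{j-i}z|^{\ell+1}\lesssim b^{-(i-j)(\ell+1)\zeta_-}|z|^{\ell+1}$ via \eqref{se19}, and a two-region split of the $z$-integral closes the estimate. Your closing remark that only the vanishing moments of $\varphi$ enter when $i\geq j$ is also correct.

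One step, however, needs repair. You justify that $\rho(\xi)$ is small via the parenthetical ``recall $|\xi|\lesssim|z|$,'' but in the anisotropic setting $|\xi|\lesssim|z|$ does \emph{not} imply $\rho(\xi)\lesssim\rho(z)$: with $A=\mathrm{diag}(2,4)$, two vectors of comparable Euclidean length lying along the two coordinate axes have $\rho$-values differing by an unbounded power, so this Euclidean comparison cannot control the homogeneous quasi-norm. The fix is to use the segment (integral) form of Taylor's remainder, so that the intermediate points are $\xi=tA^{j-i}z$ with $t\in[0,1]$. Since each $B_k$ is a convex set containing the origin, $\rho(t\eta)\leq\rho(\eta)$ for $t\in[0,1]$, and the homogeneity $\rho(A^ky)=b^k\rho(y)$ together with $j-i<0$ then gives $\rho(\xi)\leq\rho(A^{j-i}z)=b^{j-i}\rho(z)\leq\rho(z)$. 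With this replacement, your split into $\rho(z)\leq\rho(w)/(2H)$ (where the quasi-triangle inequality forces $\rho(w-\xi)\geq\rho(w)/(2H)$) and its complement (where the rapid decay of $\varphi$, combined with $|z|^{\ell+1}\lesssim(1+\rho(z))^{(\ell+1)\ln\lambda_+/\ln b}$, absorbs everything) delivers the stated bound.
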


The following discrete Calder\'{o}n reproducing formula is just
\cite[Lemma 3.2]{lfy14},
which is an anisotropic version of
\cite[Theorem 6.16]{fjw91}.

\begin{lemma}\label{fivel5}
Suppose that $\Psi,\,\Phi\in\cs(\rn)$ satisfy that
$\supp \widehat{\Psi},\,\supp \widehat{\Phi}
\subset [-1,1]^n\setminus\{\vec0_n\}$
and, for all $\xi\in\rn\setminus\{\vec0_n\}$,
$$\sum_{j\in\mathbb{Z}}\overline
{\widehat{\Phi}((A^*)^j\xi)}\widehat{\Psi}((A^*)^j\xi)=1,$$
where $A^*$ denotes the transpose of $A$.
Then, for any $f\in\cs'_0(\rn)$,
$$f(\cdot)=\sum_{Q\in\widehat{\mathcal{Q}}}
\langle f,\Phi_Q\rangle\Psi_Q(\cdot)
=\sum_{j\in\mathbb{Z}}\sum_{Q\in\mathcal{Q}_j}b^{j}
f\ast\widetilde{\Phi}_{j}(x_Q)\Psi_{j}(\cdot-x_Q)\hs
holds\ true\ in\hs \cs'(\rn),$$
where
$\widetilde{\Phi}(\cdot):=\overline{\Phi(-\cdot)}$,
and $\Phi_Q,\,\Psi_Q$ are defined as in \eqref{five36}.
\end{lemma}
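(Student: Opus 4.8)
The plan is to derive the discrete formula from the continuous Calder\'on reproducing formula by discretizing each dyadic layer, and then to control the resulting discretization error by means of the almost-orthogonality estimate of Lemma \ref{fivel4}.

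First I would record the continuous version adapted to the pair $(\Phi,\Psi)$. The hypothesis is precisely the statement that $\sum_{j\in\zz}\widehat{\widetilde\Phi_j\ast\Psi_j}(\xi)=1$ for all $\xi\in\rn\setminus\{\vec0_n\}$, so that $f=\sum_{j\in\zz}f\ast\widetilde\Phi_j\ast\Psi_j$ in $\cs'(\rn)$ for every $f\in\cs'_0(\rn)$: the convergence at the low-frequency end uses that $f$ vanishes weakly at infinity (so the ``missing mass at $\xi=\vec0_n$'' contributes nothing), and at the high-frequency end it uses the Schwartz decay together with the vanishing moments of $\Phi$ and $\Psi$ coming from $\supp\widehat\Phi$, $\supp\widehat\Psi$ being compact and bounded away from $\vec0_n$. (Alternatively, one may invoke Lemma \ref{fivel2} directly.)

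Next, for each fixed $j\in\zz$ I would write $f\ast\widetilde\Phi_j\ast\Psi_j(x)=\int_\rn(f\ast\widetilde\Phi_j)(y)\,\Psi_j(x-y)\,dy$ and compare this integral with the Riemann sum $\sum_{Q\in\mathcal{Q}_j}|Q|\,(f\ast\widetilde\Phi_j)(x_Q)\,\Psi_j(x-x_Q)$ over the dilated cubes $Q\in\mathcal{Q}_j$, each of measure $b^{-j}$ with lower-left corner $x_Q$. Because $\widehat{\widetilde\Phi_j\ast\Psi_j}$ is supported in a dilate of $[-1,1]^n$ matched to the sampling lattice of the cubes $\mathcal{Q}_j$, a Poisson-summation/sampling argument---together with the cube--ball comparison \eqref{five35} and the eigenvalue bounds \eqref{se19}---identifies this integral with the Riemann sum up to a remainder whose kernel is estimated uniformly in $j$ by Lemma \ref{fivel4}. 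Summing over $j$ and then rewriting $|Q|\,(f\ast\widetilde\Phi_j)(x_Q)$ and $\Psi_j(\cdot-x_Q)$ via \eqref{five36} yields the two claimed expressions $\sum_{Q\in\widehat{\mathcal{Q}}}\langle f,\Phi_Q\rangle\Psi_Q$ and $\sum_{j\in\zz}\sum_{Q\in\mathcal{Q}_j}b^{j}f\ast\widetilde\Phi_j(x_Q)\Psi_j(\cdot-x_Q)$, which coincide termwise. Convergence of the double series in $\cs'(\rn)$ is verified by pairing against an arbitrary $\phi\in\cs(\rn)$: Lemma \ref{fivel4} gives geometric decay in the scale difference when $\Psi_j$ is tested against $\phi$, while the vanishing moments of $\Psi$ and the Schwartz decay make the spatial sum over $Q$ converge absolutely.

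The principal obstacle is the middle step: upgrading the Riemann-sum approximation to an \emph{exact} identity. Concretely, one must show that the operator furnished by the accumulated discretization remainder either vanishes identically, for the precise frequency supports at hand, or is at worst a bounded operator of norm strictly less than $1$ on a suitable dense class, so that it can be inverted by a Neumann series before passing to the limit in $\cs'(\rn)$. It is here that the anisotropic geometry---the control of the dilated cubes by $\rho$-balls in \eqref{five35}, the directional eigenvalue estimates \eqref{se19}, and the off-diagonal bound of Lemma \ref{fivel4}---must be combined carefully, and where keeping track of the relevant Schwartz seminorms uniformly in $j$ is the most delicate bookkeeping.
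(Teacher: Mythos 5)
The paper does not prove this lemma at all: it simply cites it from \cite[Lemma 3.2]{lfy14}, noting that it is an anisotropic analogue of \cite[Theorem 6.16]{fjw91}. So there is no in-paper proof to compare against; the question is whether your argument stands on its own.

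Your overall plan is the standard one behind the $\varphi$-transform (continuous reproducing formula, then discretize each frequency layer by sampling on the lattice $A^{-j}\zz^n$), and the first and last steps are right: the continuous identity $f=\sum_j f\ast\widetilde\Phi_j\ast\Psi_j$ in $\cs'(\rn)$ for $f\in\cs'_0(\rn)$ follows from the hypothesis on $\widehat\Phi,\widehat\Psi$, with weak vanishing at infinity handling the $j\to+\infty$ tail; and termwise, $b^j f\ast\widetilde\Phi_j(x_Q)\Psi_j(\cdot-x_Q)=\la f,\Phi_Q\ra\Psi_Q$ by \eqref{five36}. But the middle step is not merely ``the principal obstacle''; as written you leave a genuine gap, because you treat the sampling identity as an approximation with a remainder to be estimated (possibly controlled by Lemma \ref{fivel4} and then absorbed by a Neumann series). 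In fact there is no remainder: the lattice sum equals the integral \emph{exactly}. Since $\supp\widehat\Phi,\supp\widehat\Psi\subset[-1,1]^n\setminus\{\vec0_n\}$, for each fixed $j$ and $x$ the function $y\mapsto\bigl(f\ast\widetilde\Phi_j\bigr)(y)\,\Psi_j(x-y)$ has Fourier transform (in $y$) supported in $(A^\ast)^j[-2,2]^n$. Applying the Poisson summation formula on the lattice $A^{-j}\zz^n$, whose dual lattice is $2\pi(A^\ast)^j\zz^n$, only the zero dual-lattice point lies in $(A^\ast)^j[-2,2]^n$ (because $2\pi>2$), so
$$b^j\sum_{k\in\zz^n}\bigl(f\ast\widetilde\Phi_j\bigr)(A^{-j}k)\,\Psi_j(x-A^{-j}k)=\int_\rn\bigl(f\ast\widetilde\Phi_j\bigr)(y)\,\Psi_j(x-y)\,dy=f\ast\widetilde\Phi_j\ast\Psi_j(x)$$
identically, with no error term. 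This is the band-limited (Shannon) sampling theorem; it does not rely on \eqref{five35}, \eqref{se19} or Lemma \ref{fivel4}, which are irrelevant to this identity, nor on any Neumann-series inversion. One should also be a bit careful about regularity: $f\ast\widetilde\Phi_j$ is a slowly growing $C^\infty$ function (it is the convolution of a tempered distribution with a Schwartz function), so the application of Poisson summation needs the standard justification by testing against Schwartz functions, but this is routine given the compact Fourier supports. With the exact sampling identity in hand, your convergence argument for the double sum via Lemma \ref{fivel4} and the Schwartz decay of $\Psi$ is the right way to finish.
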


The dyadic maximal function $M_d(f)$ is defined
by setting, for all $f\in L_{{\rm loc}}^1(\rn)$ and $x\in\rn$,
\begin{align}\label{five70}
M_d(f)(x):=\sup_{k\in\zz}E_k(f)(x),
\end{align}
where, for any $k\in\zz$,
$$E_k(f)(x):=\sum_{Q\in\mathcal{Q}_k}\lf[\frac1{|Q|}
\int_{Q}|f(y)|\,dy\r]\chi_Q(x)$$
and $\mathcal{Q}_k:=\{Q_\alpha^k:\ \alpha\in I_k\}$
denotes the set of dyadic cubes from Lemma
\ref{fivel1}. Moreover, by \cite[Proposition A.4(ii)]{blyz10},
we know that $f\le M_d(f)$ almost everywhere.

By a slight modification on the proof of
\cite[Proposition A.5]{blyz10}, we easily find that
the conclusions of \cite[Proposition A.5]{blyz10} also hold
true for all $f\in L^{p,q}(\rn)$ with $p\in(1,\fz)$ and $q\in(0,\fz]$,
the details being omitted. This provides the Calder\'{o}n-Zygmund
decomposition in the present setting, which is stated as follows.

\begin{lemma}\label{fivel12}
Let $p\in(1,\fz)$, $q\in(0,\fz]$ and $f\in L^{p,q}(\rn)$.
Then, for all $\lambda\in(0,\fz)$, there exists a sequence
$\{Q_j\}_j\subset\mathcal{Q}$ of mutually disjoint dyadic cubes such that
\begin{enumerate}
\item[{\rm (i)}] $\bigcup_{j}Q_j=\{x\in\rn:\ M_d(f)(x)>\lambda\}$;
\item[{\rm (ii)}] $|f(x)|\le\lambda$ for almost every $x\notin\bigcup_{j}Q_j$;
\item[{\rm (iii)}] there exists a constant $C\in(1,\fz)$, independent of
$f$ and $\lambda$, such that
$$\lambda<\frac1{|Q_j|}\int_{Q_j}|f(x)|\,dx\le C\lambda;$$
\item[{\rm (iv)}] for any $Q\in\{Q_j\}_j$, there exists a unique
$\widetilde{Q}\in\mathcal{Q}$ such that
$$Q\subset\widetilde{Q},\ \ \ell(Q)=\ell(\widetilde{Q})-1\ \ and\ \
\frac1{|\widetilde{Q}|}\int_{\widetilde{Q}}|f(x)|\,dx<\lambda,$$
where $\mathcal{Q}$ is as in Lemma \ref{fivel1} and $\ell(Q)$
is the level of $Q$.
\end{enumerate}
\end{lemma}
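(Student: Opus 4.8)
The plan is to follow line by line the proof of the Calder\'{o}n--Zygmund decomposition for Lebesgue spaces in \cite[Proposition A.5]{blyz10}: its dyadic stopping-time construction and the verification of (i)--(iv) use the hypothesis $f\in L^{p}(\rn)$ only to guarantee that, for each height $\lambda$, the level set $\{M_d(f)>\lambda\}$ has finite Lebesgue measure, so that maximal stopping cubes exist. Thus the one genuinely new point is to re-derive this finiteness when $f\in L^{p,q}(\rn)$ with $p\in(1,\fz)$ and $q\in(0,\fz]$; everything else then transcribes without change.

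To obtain the finiteness I would first record the pointwise bound $M_d(f)\ls M_{{\rm HL}}(f)$. Indeed, for a level-$k$ dyadic cube $Q\ni x$, Lemma \ref{fivel1}(iv) gives $Q\subset x+B_{vk+u}$ and $|Q|\ge b^{vk-u}$, so the dyadic average of $|f|$ over $Q$ is at most $b^{2u}$ times the average of $|f|$ over the dilated ball $x+B_{vk+u}$, whence $M_d(f)\le b^{2u}M_{{\rm HL}}(f)$ on $\rn$. By Remark \ref{tr1}, $M_{{\rm HL}}$ is of weak type $(1,1)$ (see \eqref{te56}) and bounded on $L^{\fz}(\rn)$ (see \eqref{te57} with $p=\fz$), so the Marcinkiewicz interpolation theorem shows it is bounded on $L^{p,q}(\rn)$ for all $p\in(1,\fz)$ and $q\in(0,\fz]$. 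Hence $M_d(f)\in L^{p,q}(\rn)$, and then \eqref{se6} and \eqref{se7} force $d_{M_d(f)}(\lambda)=|\{x\in\rn:\ M_d(f)(x)>\lambda\}|<\fz$ for every $\lambda\in(0,\fz)$.

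With this in hand the rest is the classical argument. Fixing $\lambda$, set $\Omega_\lambda:=\{M_d(f)>\lambda\}$; for $x\in\Omega_\lambda$ there is $k\in\zz$ with $E_k(f)(x)>\lambda$, i.e.\ $x$ lies in a dyadic cube whose $|f|$-average exceeds $\lambda$. By (ii) and (iii) of Lemma \ref{fivel1} the dyadic cubes containing $x$ are nested, those of them with $|f|$-average $>\lambda$ are contained in $\Omega_\lambda$ (hence of measure $\le|\Omega_\lambda|<\fz$), and their measures tend to $\fz$ as the level decreases (Lemma \ref{fivel1}(iv)), so there is a largest such cube. Let $\{Q_j\}_j$ be the collection of all these maximal cubes; any two of them are either equal or disjoint, so they are mutually disjoint, and by construction $\bigcup_jQ_j=\Omega_\lambda=\{M_d(f)>\lambda\}$, which is (i). Property (ii) follows from $f\le M_d(f)$ a.e.\ (see \cite[Proposition A.4(ii)]{blyz10}), since then $|f(x)|\le M_d(f)(x)\le\lambda$ for a.e.\ $x\notin\bigcup_jQ_j$. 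For (iii) and (iv), fix $Q\in\{Q_j\}_j$: the lower bound on its average is the selection rule, while the unique dyadic parent $\widetilde{Q}\in\cq$ of $Q$ --- the smallest dyadic cube properly containing it, provided by Lemma \ref{fivel1}(iii) --- has average $\le\lambda$ by the maximality of $Q$ (this is (iv)) and satisfies $|\widetilde{Q}|\le C_{(A)}|Q|$ for a positive constant $C_{(A)}$ depending only on $A$, by Lemma \ref{fivel1}(iv); hence $\frac1{|Q|}\int_Q|f(x)|\,dx\le\frac{|\widetilde{Q}|}{|Q|}\cdot\frac1{|\widetilde{Q}|}\int_{\widetilde{Q}}|f(x)|\,dx\le C_{(A)}\lambda$, which gives (iii) with $C:=C_{(A)}$.

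The only step that demands more than bookkeeping is the $L^{p,q}(\rn)$-boundedness of the maximal operator underlying the finiteness of $\Omega_\lambda$; but for $p>1$ this is a standard real-interpolation consequence of the weak-$(1,1)$ and $L^{\fz}$ bounds already quoted in the excerpt (alternatively one may interpolate the single-function versions of Lemma \ref{fivel3}). The dyadic geometry, the disjointification of the stopping cubes, and the two-sided control of the averages are then exactly as in \cite[Proposition A.5]{blyz10}, which is why only the necessary changes need be indicated.
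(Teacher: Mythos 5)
The paper gives no proof of this lemma; it simply states that the argument from \cite[Proposition A.5]{blyz10} carries over ``by a slight modification,'' with the details omitted. Your proof correctly fills in those omitted details, and your identification of the one genuinely new point --- re-establishing the finiteness of $|\{M_d(f)>\lambda\}|$ under the weaker hypothesis $f\in L^{p,q}(\rn)$ --- is exactly the modification the authors had in mind. The pointwise bound $M_d(f)\le b^{2u}M_{\rm HL}(f)$ from Lemma \ref{fivel1}(iv) is correct, the Lorentz-space boundedness of $M_{\rm HL}$ via Marcinkiewicz interpolation between weak $(1,1)$ and $L^\infty$ is standard, and the rest (nesting of dyadic ancestors, existence of a maximal stopping cube since cube measures blow up as the level decreases while $|\Omega_\lambda|<\fz$, the constant $C=b^{2u-v}$ in (iii) from comparing $|\widetilde Q|$ with $|Q|$, and (ii) from $|f|\le M_d(f)$ a.e.) transcribes the classical Calder\'on--Zygmund stopping-time argument without change. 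This is the same route the paper tacitly takes, just spelled out.
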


Motivated by \cite[Theorem 1]{ycp05}, we obtain the following anisotropic
Fefferman-Stein vector-valued inequality in $L^{p,q}(\rn)$,
which plays a key role in the proof of Theorem \ref{fivet2}.

\begin{lemma}\label{fivel6}
Let $r\in(1,\fz]$.
\begin{enumerate}
\item[{\rm(i)}]
If $p\in(1,\fz)$ and $q\in(0,\fz]$, then
there exists a positive constant $C_9$ such that,
for all sequences
$\{f_j\}_j$ of measurable functions,
\begin{align}\label{five22}
\lf\|\lf\{\sum_{j}\lf[M_{{\rm HL}}(f_j)\r]^r\r\}^
{\frac1r}\r\|_{L^{p,q}(\rn)}
\leq C_9\lf\|\lf[\sum_{j}\lf|f_j\r|^r\r]^
{\frac1r}\r\|_{L^{p,q}(\rn)};
\end{align}
\item[{\rm(ii)}]
If $p\in(0,\fz),\,q\in(0,\fz]$
and $s\in(0,\min\{r,p\})$, then
there exists a positive constant $C_{10}$
such that, for all sequences
$\{f_j\}_j$ of measurable functions,
\begin{align}\label{five62}
\lf\|\lf\{\sum_{j}\lf[M_{{\rm HL}}(f_j)\r]^
{\frac rs}\r\}^{\frac1r}\r\|_{L^{p,q}(\rn)}
\leq C_{10}\lf\|\lf[\sum_{j}\lf|f_j\r|^{\frac rs}\r]^
{\frac1r}\r\|_{L^{p,q}(\rn)}.
\end{align}
\end{enumerate}
\end{lemma}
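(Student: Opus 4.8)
The plan is to prove Lemma \ref{fivel6} by first establishing part (i) for the Lorentz spaces via real interpolation between the known endpoint estimates, and then deducing part (ii) from part (i) by the standard exponent-scaling trick.

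For part (i), I would begin with the vector-valued operator $T(\{f_j\}_j):=\{\sum_j[M_{\rm HL}(f_j)]^r\}^{1/r}$, viewed as a sublinear operator acting on $\ell^r$-valued functions. By Lemma \ref{fivel3}(i), $T$ is bounded on $L^{p_0}(\rn;\ell^r)$ for every $p_0\in(1,\fz)$, and by Lemma \ref{fivel3}(ii) it is of weak type $(1,1)$ from $L^1(\rn;\ell^r)$ to $L^{1,\fz}(\rn;\ell^r)$. Fixing $p\in(1,\fz)$, I would pick $p_0,p_1\in(1,\fz)$ with $p_0<p<p_1$ so that $T$ is strong type $(p_0,p_0)$ and $(p_1,p_1)$ on the $\ell^r$-valued Lebesgue spaces; then the Marcinkiewicz-type real interpolation theorem for operators on vector-valued Lebesgue spaces (equivalently, applying the scalar Lorentz interpolation of \cite{h66} componentwise to $\|T(\{f_j\})\|$ against $\|\{f_j\}\|_{\ell^r}$) yields that $T$ maps $L^{p,q}(\rn;\ell^r)$ boundedly into itself for every $q\in(0,\fz]$. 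This is exactly \eqref{five22}. The case $r=\fz$ is handled by the obvious pointwise bound $\sup_j M_{\rm HL}(f_j)\le M_{\rm HL}(\sup_j|f_j|)$ together with the boundedness of $M_{\rm HL}$ on $L^{p,q}(\rn)$, which itself follows from interpolating \eqref{te56} and \eqref{te57} (see Remark \ref{tr1}).

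For part (ii), let $s\in(0,\min\{r,p\})$ and set $g_j:=|f_j|^s$. Applying \eqref{five22} (valid since $p/s\in(1,\fz)$ and $r/s\in(1,\fz]$, with exponent $r/s$ in place of $r$ and $p/s$ in place of $p$) to the family $\{g_j\}_j$ gives
\begin{align*}
\lf\|\lf\{\sum_j[M_{\rm HL}(|f_j|^s)]^{r/s}\r\}^{s/r}\r\|_{L^{p/s,q/s}(\rn)}
\le C\lf\|\lf[\sum_j|f_j|^r\r]^{s/r}\r\|_{L^{p/s,q/s}(\rn)}.
\end{align*}
By the pointwise estimate $[M_{\rm HL}(f_j)]^s\ls M_{\rm HL}(|f_j|^s)$, which holds because $s\in(0,1)$ is not required — rather one uses that for $s\le 1$ Jensen gives $M_{\rm HL}(|f_j|)^s\le M_{\rm HL}(|f_j|^s)$, and for general $s<\min\{r,p\}$ one simply factors out an $L^1$-averaging — we may replace $M_{\rm HL}(|f_j|^s)$ by $[M_{\rm HL}(f_j)]^s$ on the left, and then raising both sides to the power $1/s$ and invoking the homogeneity identity \eqref{se22} (with exponent $r:=1/s$ there) converts the $L^{p/s,q/s}$ quasi-norms back to $L^{p,q}$ quasi-norms, producing \eqref{five62}.

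The main obstacle I anticipate is the careful justification of real interpolation in the vector-valued Lorentz setting of part (i): one must verify that the Marcinkiewicz interpolation theorem applies with the target being $\ell^r$-valued Lorentz spaces, which in our case reduces cleanly to scalar Lorentz interpolation only because the operator $T$ already outputs a scalar-valued function (the $\ell^r$-norm). A secondary subtlety is the Jensen-type pointwise comparison $M_{\rm HL}(f_j)^s\ls M_{\rm HL}(|f_j|^s)$ used in part (ii); for $s\in(0,1]$ it is immediate from concavity, while for the full range $s\in(0,\min\{r,p\})$ one only ever applies it with $s<1$ after possibly shrinking $s$ (noting that decreasing $s$ only weakens the hypothesis), so no genuine difficulty arises there. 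The rest is bookkeeping with the scaling identity \eqref{se22} and the quasi-norm equivalences \eqref{se6} and \eqref{se7}.
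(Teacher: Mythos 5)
Your proof of part (i) takes a genuinely different route from the paper. You propose applying abstract real interpolation to the sublinear operator $T(\{f_j\}):=\{\sum_j[M_{\rm HL}(f_j)]^r\}^{1/r}$ between the $\ell^r$-valued endpoint estimates of Lemma \ref{fivel3}, using the identity $(L^{p_0}(\ell^r),L^{p_1}(\ell^r))_{\theta,q}=L^{p,q}(\ell^r)$. This is a legitimate approach (for $r\in(1,\fz]$, $\ell^r$ is Banach and the vector-valued real-interpolation identity holds, and $T$ is sublinear), and it is shorter in spirit. The paper instead carries out a direct Calder\'on--Zygmund-type decomposition of $f:=[\sum_j|f_j|^r]^{1/r}$ into the pieces $f_j\chi_{\Omega_k}$ and $f_j\chi_{(\Omega_k)^\complement}$, pairing the weak-$(1,1)$ estimate of Lemma \ref{fivel3}(ii) against the strong $L^{mr}$ estimate of Lemma \ref{fivel3}(i) with a good-$\lambda$/distribution-function argument split according to $q/p$. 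The paper's approach is essentially a manual implementation of interpolation; yours externalizes it. Either buys the same conclusion.

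For part (ii), however, there is a genuine gap that your argument does not survive. You set $g_j:=|f_j|^s$, apply \eqref{five22} to $\{g_j\}_j$, and then invoke the pointwise estimate $[M_{\rm HL}(f_j)]^s\ls M_{\rm HL}(|f_j|^s)$ to convert $M_{\rm HL}(|f_j|^s)$ into $[M_{\rm HL}(f_j)]^s$ on the left-hand side. That pointwise estimate is false in the direction you need: for $s\in(0,1]$, H\"older (or Jensen applied to the concave function $t\mapsto t^s$) gives, for any dilated ball $B$,
\begin{align*}
\frac1{|B|}\int_B|f|^s\,dz\le\lf(\frac1{|B|}\int_B|f|\,dz\r)^s,
\end{align*}
whence $M_{\rm HL}(|f|^s)\le[M_{\rm HL}(f)]^s$, not the reverse. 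So replacing $M_{\rm HL}(|f_j|^s)$ by $[M_{\rm HL}(f_j)]^s$ only \emph{enlarges} the left-hand side, which does not yield the desired bound. Moreover, even setting the Jensen direction aside, the substitution $g_j=|f_j|^s$ scrambles the exponents on the right-hand side: your derived inequality has $[\sum_j|f_j|^r]^{1/r}$ on the right, while \eqref{five62} requires $[\sum_j|f_j|^{r/s}]^{1/r}$, so the exponent bookkeeping does not close. The correct and simpler route, as in the paper, is to apply part (i) directly to the original family $\{f_j\}_j$ with $r/s$, $p/s$, $q/s$ in place of $r$, $p$, $q$ (valid since $p/s>1$ and $r/s>1$), and then convert the resulting $L^{p/s,q/s}$ quasi-norms to $L^{p,q}$ quasi-norms using \eqref{se22}; no intermediate pointwise comparison is needed and the exponents match exactly.
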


\begin{proof}
Assume that the right-hand sides of \eqref{five22} and \eqref{five62}
are finite. We first prove (i). To this end,
let $f:=[\sum_{j}|f_j|^r]^{\frac 1r}$ and
$\Omega_k:=\{x\in\rn:\ M_d(f)(x)>2^k\}$ for any $k\in\zz$,
where $M_d(f)$ is as in \eqref{five70}.
By (i) and (iii) of Lemma \ref{fivel12}, we obtain a sequence
$\{Q_i^k\}_{i}$ of dyadic cubes
satisfying that
$\Omega_k=\bigcup_iQ_i^k$,
\begin{align}\label{five4}
Q_i^k\cap Q_j^k
=\emptyset\ \ \ {\rm for\ all}\  i,\ j\ {\rm with}\ i\neq j
\end{align}
and, for all $i$,
\begin{align}\label{five5}
\frac1{|Q_i^k|}
\int_{Q_i^k}f(x)\,dx\ls2^k.
\end{align}
For any $j$, let $f_j^{(1)}:=f_j\chi_{\Omega_k}$ and
$f_j^{(2)}:=f_j\chi_{(\Omega_k)^\com}$. Then
it is easy to see that there exists a positive constant
$C_{(p,q,r)}$, depending on $p,\,q,\,r$, but independent
of $\{f_j\}_j$, such that
\begin{align}\label{five6}
\lf\|\lf\{\sum_{j}\lf[M_{{\rm HL}}(f_j)\r]^r\r\}^
{\frac1r}\r\|_{L^{p,q}(\rn)}
&\leq C_{(p,q,r)}\lf[
\lf\|\lf\{\sum_{j}\lf[M_{{\rm HL}}(f_j^{(1)})\r]^r\r\}^
{\frac1r}\r\|_{L^{p,q}(\rn)}\r.\\
&\hs+\lf.\lf\|\lf\{\sum_{j}\lf[M_{{\rm HL}}(f_j^{(2)})\r]^r\r\}^
{\frac1r}\r\|_{L^{p,q}(\rn)}\r]\noz\\
&=:C_{(p,q,r)}\lf({\rm I_1}+{\rm I_2}\r).\noz
\end{align}

For ${\rm I_1}$, by \eqref{se6}, Lemma \ref{fivel3}(ii),
the fact
$\Omega_k=\bigcup_iQ_i^k$,
\eqref{five5}, \eqref{five4} and \cite[Remark 4.8]{lyy15}, we have
\begin{align}\label{five2}
\lf({\rm I_1}\r)^q
&\sim\sum_{k\in\zz}2^{kq}
\lf|\lf\{x\in\rn:\ \lf[\sum_{j}\lf\{M_{{\rm HL}}(f_j^{(1)})\r\}^r(x)\r]^
{\frac1r}>2^k\r\}\r|^{\frac qp}\\
&\ls\sum_{k\in\zz}2^{kq}2^{-\frac{kq}p}
\lf\|\lf[\sum_{j}\lf|f_j^{(1)}\r|^r\r]^
{\frac1r}\r\|_{L^1(\rn)}^{\frac qp}
\ls\sum_{k\in\zz}2^{kq}2^{-\frac{kq}p}\lf\{\int_
{\bigcup_iQ_i^k}
f(x)\,dx\r\}^{\frac qp}\noz\\
&\ls\sum_{k\in\zz}2^{kq}2^{-\frac{kq}p}\lf[\sum_i
2^k\lf|Q_i^k\r|\r]^{\frac qp}
\ls\sum_{k\in\zz}2^{kq}|\Omega_k|^{\frac qp}
\ls\|M_d(f)\|_{L^{p,q}(\rn)}^q\noz\\
&\ls\|M_{{\rm HL}}(f)\|_{L^{p,q}(\rn)}^q
\ls\|f\|_{L^{p,q}(\rn)}^q.\noz
\end{align}

For ${\rm I_2}$, take $m\in\nn$ satisfying that $mr>p$. Then, by
\eqref{se6} and Lemma \ref{fivel3}(i), we find that
\begin{align}\label{five8}
({\rm I_2})^q
&\sim\sum_{k\in\zz}2^{kq}
\lf|\lf\{x\in\rn:\ \lf[\sum_{j}\lf\{M_{{\rm HL}}(f_j^{(2)})\r\}^r(x)\r]^
{\frac1r}>2^k\r\}\r|^{\frac qp}\\
&\ls\sum_{k\in\zz}2^{kq}2^{-kmr\frac qp}\lf[\int_{\rn}\lf\{\sum_{j}
\lf[M_{{\rm HL}}(f_j^{(2)})\r]^r(x)\r\}^{{\frac1r}mr}\,dx\r]^{\frac qp}\noz\\
&\ls\sum_{k\in\zz}2^{kq}2^{-kmr\frac qp}\lf[\sum_{\ell\in\zz}
2^{\ell mr}\lf|\lf\{x\in\lf(\Omega_k\r)^\com:\ f(x)>2^\ell\r\}\r|\r]^{\frac qp}.\noz
\end{align}
Next we estimate ${\rm I_2}$ by considering two cases: $q/p\in(0,1]$ and $q/p\in(1,\fz]$.

\emph{Case 1:} $q/p\in(0,1]$. For this case,
by \eqref{five8}, the fact that $f(x)\le M_d(f)(x)\le2^k$
for almost every $x\in(\Omega_k)^\com$, $mr>p$ and \eqref{se6}, we know that
\begin{align}\label{five10}
({\rm I_2})^q
&\ls\sum_{k\in\zz}2^{kq}2^{-kmr\frac qp}
\sum_{\ell\in(-\fz,k-1]\cap\zz}2^{\ell mr\frac qp}
\lf|\lf\{x\in\lf(\Omega_k\r)^\com:\ f(x)>2^\ell\r\}\r|^{\frac qp}\\
&\ls\sum_{\ell\in\zz}\sum_{k\in[\ell+1,\fz)\cap\zz}
2^{q(\ell-k)(\frac{mr}p-1)}2^{\ell q}
\lf|\lf\{x\in\lf(\Omega_k\r)^\com:\ f(x)>2^\ell\r\}\r|^{\frac qp}\noz\\
&\ls\sum_{\ell\in\zz}2^{\ell q}\lf|\lf\{x\in\rn:\ f(x)>2^\ell\r\}\r|^{\frac qp}
\sim\|f\|_{L^{p,q}(\rn)}^q.\noz
\end{align}

\emph{Case 2:} $q/p\in(1,\fz]$. For this case, let
$\delta:=\frac{mr-p}2$. Then,
by \eqref{five8}, the H\"{o}lder inequality, the fact that
$f(x)\le M_d(f)(x)\le2^k$ for almost every $x\in(\Omega_k)^\com$,
$mr-\delta>p$ and \eqref{se6} again, we have
\begin{align*}
({\rm I_2})^q
&\ls\sum_{k\in\zz}2^{kq}2^{-kmr\frac qp}2^{k\delta\frac qp}
\sum_{\ell\in(-\fz,k-1]\cap\zz}2^{\ell(mr-\delta)\frac qp}
\lf|\lf\{x\in\lf(\Omega_k\r)^\com:\ f(x)>2^\ell\r\}\r|^{\frac qp}\\
&\ls\sum_{\ell\in\zz}\sum_{k\in[\ell+1,\fz)\cap\zz}
2^{q(\ell-k)(\frac{mr-\delta}p-1)}2^{\ell q}
\lf|\lf\{x\in\lf(\Omega_k\r)^\com:\ f(x)>2^\ell\r\}\r|^{\frac qp}\noz\\
&\ls\sum_{\ell\in\zz}2^{\ell q}\lf|\lf\{x\in\rn:\ f(x)>2^\ell\r\}\r|^{\frac qp}
\sim\|f\|_{L^{p,q}(\rn)}^q,\noz
\end{align*}
which, combined with \eqref{five6}, \eqref{five2} and
\eqref{five10}, implies \eqref{five22}.
This prove (i).

Now we prove (ii). For any $p\in(0,\fz)$, $r\in(1,\fz)$ and $s\in(0,\min\{r,p\})$,
by \eqref{se22} and (i), we know that
\begin{align*}
\lf\|\lf\{\sum_{j}\lf[M_{{\rm HL}}(f_j)\r]^
{\frac rs}\r\}^{\frac1r}\r\|_{L^{p,q}(\rn)}
&=\lf\|\lf\{\sum_{j}\lf[M_{{\rm ML}}
f_j\r]^{\frac rs}\r\}^{\frac sr}\r\|
^{\frac1s}_{L^{p/s,q/s}(\rn)}\\
&\ls\lf\|\lf[\sum_{j}| f_j|^
{\frac rs}\r]^{\frac sr}\r\|
^{\frac1s}_{L^{p/s,q/s}(\rn)}
\sim\lf\|\lf[\sum_{j}\lf|f_j\r|^
{\frac rs}\r]^{\frac1r}\r\|_{L^{p,q}(\rn)}.
\end{align*}
This finishes the proof of (ii) and hence Lemma \ref{fivel6}.
\end{proof}

\begin{definition}\label{fived2}
Let $r,\,\lambda\in(0,\fz)$. For any sequence
$\{s_Q\}_{Q\in\widehat{\mathcal{Q}}}\subset \mathbb{C}$,
its \emph{majorant sequence}
$s_{r,\,\lambda}^*:=
\{(s_{r,\,\lambda}^*)_Q\}_{Q\in\widehat{\mathcal{Q}}}$,
is defined by setting, for all $Q\in\widehat{\mathcal{Q}}$,
$$\lf(s_{r,\,\lambda}^*\r)_Q:=
\lf\{\sum_{P\in\widehat{\mathcal{Q}},\,|P|=|Q|}
\frac{|s_P|^r}{\lf[1+|Q|^{-1}\rho(x_Q-x_P)\r]^
\lambda}\r\}^{\frac1r}.$$
\end{definition}

The following lemma is just \cite[Lemma 6.2]{bh06}.

\begin{lemma}\label{fivel7}
Let $a\in(0,\fz)$, $r\in [a,\fz)$, $\lambda\in(r/a,\fz)$
and $i,\,j\in\mathbb{Z}$. Then there exists a positive constant $C$,
depending only on $\lambda-r/a$, such that, for any sequence
$s:=\{s_P\}_{P\in\widehat{\mathcal{Q}}},\,
Q\in\widehat{\mathcal{Q}}$ with
$|Q|=|\det A|^{-j}$ and $x\in Q$,
it holds true that
\begin{align*}
\lf\{\sum_{|P|=|\det A|^{-i}}\r.
&\lf.|s_P|^r\lf[1+\frac{\rho(x_Q-x_P)}
{\max\lf\{|P|,|Q|\r\}}\r]^{-\lambda}\r\}^{\frac1r}\\
&\leq C|\det A|^{\frac{\max\{0,\,i-j\}}{a}}
\lf\{\lf[M_{{\rm HL}}\lf(\sum_{|P|
=|\det A|^{-i}}|s_P|^a\chi_P\r)\r](x)\r\}^{\frac1a}
\end{align*}
and, in particular, if $i=j$, then
\begin{align*}
\sum_{|Q|
=|\det A|^{-j}}\lf(s_{r,\,\lambda}^*\r)_Q\chi_Q
\leq C\lf[M_{{\rm HL}}\lf(\sum_{|Q|
=|\det A|^{-j}}|s_Q|^a\chi_Q\r)\r]^{\frac1a}.
\end{align*}
\end{lemma}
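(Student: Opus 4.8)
The plan is to reconstruct the argument of \cite[Lemma 6.2]{bh06}. Write $b:=|\det A|$ and, for any $P\in\widehat{\mathcal{Q}}$ with $|P|=b^{-i}$, put $R:=\max\{b^{-i},b^{-j}\}=\max\{|P|,|Q|\}$, $d_P:=\rho(x_Q-x_P)/R$ and $g:=\sum_{|P|=b^{-i}}|s_P|^a\chi_P$. First I would reduce to the exponent $a$: since $r\ge a$, for any nonnegative sequence $(c_P)$ one has $(\sum_Pc_P^r)^{1/r}\le(\sum_Pc_P^a)^{1/a}$, and applying this with $c_P:=|s_P|(1+d_P)^{-\lambda/r}$ reduces the first asserted estimate, after taking $1/a$-th powers, to the pointwise bound, valid for all $x\in Q$,
$$\sum_{|P|=b^{-i}}|s_P|^a(1+d_P)^{-\mu}\ls b^{\max\{0,\,i-j\}}M_{{\rm HL}}(g)(x),$$
where $\mu:=\lambda a/r$ and $\mu>1$ holds precisely because $\lambda>r/a$. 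Granting the first estimate, its case $i=j$ reads $(s_{r,\,\lambda}^*)_Q\le C[M_{{\rm HL}}(\sum_{|P|=|Q|}|s_P|^a\chi_P)(x)]^{1/a}$ for all $x\in Q$; since the left-hand side is constant on $Q$ and the level-$j$ cubes of $\widehat{\mathcal{Q}}$ partition $\rn$, summing over them gives the second (``in particular'') estimate.

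The main step is then to prove the displayed pointwise bound by a $\rho$-annular decomposition of $\{P:|P|=b^{-i}\}$ about $x_Q$. Let $H$ be the quasi-triangle constant of $\rho$, set $\mathcal{A}_0:=\{P:\rho(x_Q-x_P)<2HR\}$ and, for $\nu\in\nn$, $\mathcal{A}_\nu:=\{P:2^\nu HR\le\rho(x_Q-x_P)<2^{\nu+1}HR\}$, so that $(1+d_P)^{-\mu}\ls2^{-\nu\mu}$ on $\mathcal{A}_\nu$. The two geometric facts to establish are: (a) the dilated cubes $P$ with $\rho(x_Q-x_P)<2^{\nu+1}HR$ are pairwise disjoint modulo null sets, each of measure $b^{-i}\le R$, and all contained in one $\rho$-ball $B_\rho(x_Q,C2^\nu R)$ --- this uses \eqref{five35} to bound their $\rho$-diameter by $\ls b^{-i}\le R$, together with the quasi-triangle inequality; and (b) since $x\in Q$ lies within $\rho$-distance $\ls b^{-j}\le R$ of $x_Q$ (again by \eqref{five35}), one has $B_\rho(x_Q,C2^\nu R)\subset B_\rho(x,C'2^\nu R)$, and, comparing $\rho$-balls with the dilated balls $\mathfrak{B}$ from \eqref{se14} appearing in \eqref{te58} and using $|B_\rho(x,t)|\sim t$, one gets $\int_{B_\rho(x,C'2^\nu R)}g\ls2^\nu R\,M_{{\rm HL}}(g)(x)$. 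Since $|s_P|^a=b^i\int_Pg$ for $|P|=b^{-i}$, combining (a) and (b) yields $\sum_{P\in\mathcal{A}_\nu}|s_P|^a=b^i\int_{\bigcup_{P\in\mathcal{A}_\nu}P}g\ls 2^\nu(Rb^i)M_{{\rm HL}}(g)(x)$, hence the contribution of $\mathcal{A}_\nu$ to the left-hand side is $\ls2^{\nu(1-\mu)}(Rb^i)M_{{\rm HL}}(g)(x)$; summing the geometric series over $\nu\ge0$ (convergent since $\mu>1$, with sum controlled by $\mu-1$, equivalently by $\lambda-r/a$) and noting $Rb^i=\max\{1,b^{i-j}\}=b^{\max\{0,\,i-j\}}$ proves the pointwise bound.

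The only genuine obstacle is the geometric bookkeeping in (a) and (b): one must count dilated cubes of a fixed size that sit inside a $\rho$-ball and then pass from $\rho$-ball averages to the Hardy--Littlewood maximal function taken over the family $\mathfrak{B}$, using only the estimate $|B_\rho(x,t)|\sim t$ and the containment \eqref{five35} relating dilated cubes and $\rho$-balls. Everything else --- the reduction to the exponent $a$ via the elementary sequence inequality and the convergent geometric series --- is routine, and the complete details may be found in \cite[Lemma 6.2]{bh06}.
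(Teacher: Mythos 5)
The paper does not prove Lemma \ref{fivel7} in-text --- it is stated to be ``just \cite[Lemma 6.2]{bh06}'' --- so there is no in-paper proof to compare against. Your reconstruction of the Bownik--Ho argument is correct and is the standard Frazier--Jawerth annular decomposition adapted to the anisotropic metric: reduction to exponent $a$ via the norm inequality $\|\cdot\|_{\ell^r}\le\|\cdot\|_{\ell^a}$ for $r\ge a$, $\rho$-annuli about $x_Q$ at the scale $R=\max\{|P|,|Q|\}$, containment of the union of level-$i$ cubes in each annulus in a single dilated ball through $x$ using \eqref{five35} and Definition \ref{sd2}(iii), the reconstitution $|s_P|^a=b^i\int_P g$, a convergent geometric series requiring $\mu:=\lambda a/r>1$, and passage from the main estimate with $i=j$ to the ``in particular'' statement by constancy of $(s^*_{r,\lambda})_Q$ on $Q$ together with the almost-everywhere partition of $\rn$ by level-$j$ cubes. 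All of that is sound.

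The one inaccuracy is the parenthetical claim that the geometric-series sum is controlled by $\mu-1$ ``equivalently by $\lambda-r/a$.'' In fact $\mu-1=(\lambda-r/a)\cdot a/r$, so the two are not equivalent: for fixed $\lambda-r/a$ the exponent $\mu-1$, and hence the series factor $1/(1-2^{1-\mu})$, degenerates as $a/r\to0$. Your argument therefore produces a constant depending on the triple $(a,r,\lambda)$, not solely on $\lambda-r/a$ as the statement asserts; one can recenter the issue by taking annuli of radii $2^{\nu r/a}HR$ (so each shell contributes $\ls 2^{-\nu(\lambda-r/a)}\,b^{\max\{0,i-j\}}\,M_{{\rm HL}}(g)(x)$), but the final $1/a$-th root still introduces a dependence on $a$. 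None of this affects the validity of the inequality for a fixed admissible $(a,r,\lambda)$, nor the paper's only use of the lemma (in the proof of Lemma \ref{fivel8}, where $a,r,\lambda$ are all fixed).
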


\begin{lemma}\label{fivel8}
Let $p\in(0, \fz)$ and $q\in(0, \fz]$.
Then, for any $r\in(0, \fz)$ and
$\lambda\in(\max\{1, r/2, r/p\}, \fz)$,
 there exists a positive constant $C$ such that, for all
$s:=\{s_Q\}_{Q\in\widehat{\mathcal{Q}}}$,
\begin{align*}
\lf\|\lf[\sum_{Q\in\widehat{\mathcal{Q}}}
\lf\{\lf(s^*_{r,\lambda}\r)_Q\r\}^2\chi_Q\r]^
{\frac12}\r\|_{L^{p,q}(\rn)}
\leq C\lf\|\lf[\sum_{Q\in\widehat{\mathcal{Q}}}
|s_Q|^2\chi_Q\r]^{\frac12}\r\|_{L^{p,q}(\rn)}
\end{align*}
\end{lemma}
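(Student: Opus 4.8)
The plan is to reduce the vector-valued estimate to the already-established Fefferman--Stein inequality in Lorentz spaces (Lemma \ref{fivel6}) via a bootstrap in the exponent $r$. First I would fix $a\in(0,\min\{2,p,r\})$ such that $\lambda>r/a$ (this is possible precisely because $\lambda>\max\{1,r/2,r/p\}$: choosing $a$ close enough to $\min\{2,p,r\}$ makes $r/a$ as close as we like to $\max\{r/2,r/p,1\}$). With this $a$ in hand, the key pointwise device is the ``in particular'' conclusion of Lemma \ref{fivel7} with $i=j$: for each fixed scale $|Q|=b^{-j}$,
\begin{align*}
\sum_{|Q|=b^{-j}}\lf\{\lf(s^*_{r,\lambda}\r)_Q\r\}^a\chi_Q
\ls M_{{\rm HL}}\lf(\sum_{|Q|=b^{-j}}|s_Q|^a\chi_Q\r).
\end{align*}

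Next I would raise this to the power $2/a$ and sum over all scales $j\in\zz$. Writing $g_j:=\sum_{|Q|=b^{-j}}|s_Q|^a\chi_Q$, the left-hand side of the target is controlled, after the substitution above, by
\begin{align*}
\lf\|\lf[\sum_{Q\in\widehat{\mathcal{Q}}}
\lf\{\lf(s^*_{r,\lambda}\r)_Q\r\}^2\chi_Q\r]^{\frac12}\r\|_{L^{p,q}(\rn)}
\ls\lf\|\lf[\sum_{j\in\zz}\lf\{M_{{\rm HL}}(g_j)\r\}^{2/a}\r]^{\frac a2}\r\|_{L^{p,q}(\rn)}^{1/1}
=\lf\|\lf[\sum_{j\in\zz}\lf\{M_{{\rm HL}}(g_j)\r\}^{2/a}\r]^{\frac12}\r\|_{L^{p,q}(\rn)}^{?},
\end{align*}
so it is cleaner to use \eqref{se22}: the quantity equals $\lf\|\sum_j\{M_{{\rm HL}}(g_j)\}^{2/a}\r\|_{L^{p/a,q/a}(\rn)}^{1/a}$ up to a constant. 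Now apply Lemma \ref{fivel6}(ii) with the roles $(p,q)\rightsquigarrow(p/a\cdot a, \dots)$—more precisely, set the ``$r$'' of Lemma \ref{fivel6}(ii) equal to $2/a>1$ and the ``$s$'' there equal to $1$, working in $L^{p/a,q/a}$ with $2/a\in(1,\fz)$ and noting $1<\min\{2/a,p/a\}$ holds since $a<\min\{2,p\}$; this gives
\begin{align*}
\lf\|\lf[\sum_{j\in\zz}\lf\{M_{{\rm HL}}(g_j)\r\}^{2/a}\r]^{\frac a2}\r\|_{L^{p,q}(\rn)}
\ls\lf\|\lf[\sum_{j\in\zz}|g_j|^{2/a}\r]^{\frac a2}\r\|_{L^{p,q}(\rn)}.
\end{align*}
Finally, $g_j\ge0$ is a sum over a disjoint family of cubes at scale $j$, so $|g_j|^{2/a}=\sum_{|Q|=b^{-j}}|s_Q|^2\chi_Q$; summing over $j$ recovers $\sum_{Q\in\widehat{\mathcal{Q}}}|s_Q|^2\chi_Q$ on the right, which is exactly the desired upper bound.

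The main obstacle, and the step requiring the most care, is verifying that the hypotheses of Lemma \ref{fivel6}(ii) are genuinely met by the choice of $a$, uniformly in $s$: one needs $2/a>1$ (forces $a<2$), one needs the inner exponent $s_0:=1$ to satisfy $s_0<\min\{2/a,p/a\}$ (forces $a<\min\{2,p\}$), and one simultaneously needs $\lambda>r/a$ for Lemma \ref{fivel7} to apply (forces $a>r/\lambda$). The set of admissible $a$ is the open interval $(r/\lambda,\min\{2,p,r\})$, which is nonempty exactly under the stated hypothesis $\lambda>\max\{1,r/2,r/p\}$ — the constraint $\lambda>1$ being what guarantees $r/\lambda<r$. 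A secondary technical point is the bookkeeping with \eqref{se22} to move between $L^{p,q}$ and $L^{p/a,q/a}$ and to translate powers correctly; this is routine but must be written out carefully so the exponents match. Everything else (disjointness of cubes at a fixed scale, the summation over $j$, the passage from $(s^*_{r,\lambda})_Q$ expressions to the maximal-function bound) is a direct citation of Lemmas \ref{fivel7} and \ref{fivel6}.
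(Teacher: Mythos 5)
Your proposal is correct and follows essentially the same route as the paper's proof: choose $a\in(r/\lambda,\min\{2,p,r\})$, apply the ``in particular'' case of Lemma~\ref{fivel7}, pass to $L^{p/a,q/a}$ via \eqref{se22}, apply the Fefferman--Stein inequality, and undo the exponent. The only cosmetic difference is that you invoke Lemma~\ref{fivel6}(ii) with $s=1$ in $L^{p/a,q/a}$, which, since $s=1$ collapses part (ii) to part (i), is literally the same application the paper makes of Lemma~\ref{fivel6}(i) there.
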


\begin{proof}
Let $r\in(0,\fz)$ and
$\lambda\in(\max\{1,r/2,r/p\},\fz)$.
Choose $a$ such that
$r/\lambda<a<\min\{r, 2, p\}$. Then
$0<a<r<\fz,\ \lambda>r/a,\
2/a>1$ and $p/a>1$.
Therefore, by Lemma \ref{fivel7}, \eqref{se22}
and Lemma \ref{fivel6}(i), we find that
\begin{align*}
&\lf\|\lf[\sum_{Q\in\widehat{\mathcal{Q}}}
\lf\{\lf(s^*_{r,\lambda}\r)_Q\r\}^2\chi_Q\r]^
{\frac12}\r\|_{L^{p,q}(\rn)}\\
&\hs=\lf\|\lf\{\sum_{j\in\mathbb{Z}}\lf[
\sum_{|Q|=|\det A|^{-j}}\lf(s^*_{r,\lambda}\r)_
Q\chi_Q\r]^2\r\}^{\frac12}\r\|_{L^{p,q}(\rn)}\noz\\
&\hs\ls\lf\|\lf(\lf\{\sum_{j\in\mathbb{Z}}
\lf[M_{{\rm HL}}\lf(\sum_{|Q|=|\det A|^{-j}}
|s_Q|^a\chi_Q\r)\r]^{\frac2a}\r\}^{\frac a2}\r)^
{\frac1a}\r\|_{L^{p,q}(\rn)}\noz\\
&\hs\sim\lf\|\lf\{\sum_{j\in\mathbb{Z}}
\lf[M_{{\rm HL}}\lf(\sum_{|Q|=|\det A|^{-j}}
|s_Q|^a\chi_Q\r)\r]^{\frac2a}\r\}^{\frac a2}\r\|^
{\frac1a}_{L^{p/a,q/a}(\rn)}\noz\\
&\hs\ls\lf\|\lf[\sum_{j\in\mathbb{Z}}
\lf(\sum_{|Q|=|\det A|^{-j}}|s_Q|^a\chi_Q\r)^
{\frac2a}\r]^{\frac12}\r\|_{L^{p,q}(\rn)}
\sim\lf\|\lf[\sum_{Q\in\widehat{\mathcal{Q}}}
|s_Q|^2\chi_Q\r]^{\frac12}\r\|_{L^{p,q}(\rn)},\noz
\end{align*}
which completes the proof of Lemma \ref{fivel8}.
\end{proof}

The following Lemma \ref{fivel9}
comes from \cite[Lemma 3.8]{lfy14} (see also \cite[p.\,423]{mb07}).

\begin{lemma}\label{fivel9}
For all $f\in\cs'(\rn)$ and $\Phi\in\cs(\rn)$ with
$\supp\widehat{\Phi}$ being compact
and bounded away from the origin,
the sequences
$\sup(f):=\{\sup_Q(f)\}_
{Q\in\widehat{\mathcal{Q}}}$
and
$\inf(f):=\{\inf_Q(f)\}_
{Q\in\widehat{\mathcal{Q}}}$
are defined by setting, respectively, for any
$Q\in\widehat{\mathcal{Q}}$ with
$|Q|=|\det A|^{-j}$ for some $j\in\mathbb{Z}$,
$${\rm sup}_Q(f):=\sup_{y\in Q}
|f\ast\widetilde{\Phi}_{-j}(y)|$$
and
$${\rm inf}_Q(f):=\sup\lf\{\inf_{y\in P}
|f\ast\widetilde{\Phi}_{-j}(y)|:\ |P\cap Q|\neq0,
\,|Q|/|P|=|\det A|^\gamma\r\},$$
where $\gamma\in\mathbb{N}$.
Then, for all $\lambda,\,r\in(0,\fz)$ and
sufficient large $\gamma\in\mathbb{N}$,
there exists a positive constant $C$ such that,
for any $Q\in\widehat{\mathcal{Q}}$,
$$({\rm sup}_Q(f))_{r,\,\lambda}^*
\leq C({\rm inf}_Q(f))_{r,\,\lambda}^*.$$
\end{lemma}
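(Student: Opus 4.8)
The plan is to adapt to the dilation $A$ the classical comparison of $\sup$- and $\inf$-type sequences due to Frazier and Jawerth (in the form used in \cite[p.\,423]{mb07} and \cite[Lemma 3.8]{lfy14}). Write $h_j:=f\ast\widetilde\Phi_{-j}$ for $j\in\zz$. Since $\supp\widehat\Phi$, hence $\supp\widehat{\widetilde\Phi}$, is compact and bounded away from $\vec0_n$, I would fix $\eta\in\cs(\rn)$ with $\widehat\eta$ compactly supported away from $\vec0_n$ and $\widehat\eta\equiv1$ on $\supp\widehat{\widetilde\Phi}$; this gives the reproducing identity $h_j=h_j\ast\eta_{-j}$ for every $j\in\zz$, which is the only consequence of the spectral support of $\Phi$ that will be used. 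Throughout, $\gamma\in\nn$ is the integer from the definition of ${\rm inf}_Q(f)$, to be taken large only at the very end.

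The first technical step is a discretized averaging bound: from $h_j=h_j\ast\eta_{-j}$, the rapid decay of $\eta$ and $\rho(A^jx)=b^j\rho(x)$, one gets, for all $y\in\rn$ and all $N\in\nn$,
$$|h_j(y)|\ls\sum_{P\in\mathcal{Q}_{j+\gamma}}\lf(\sup_{z\in P}|h_j(z)|\r)\,b^{-\gamma}\lf[1+b^j\rho(y-x_P)\r]^{-N},$$
where $x_P$ is the corner of the dilated cube $P$; here one uses that $(\rn,\rho,dx)$ is a space of homogeneous type with $|B_\rho(x,t)|\sim t$ (``dimension one''), so that $\sum_{P\in\mathcal{Q}_{j+\gamma}}b^{-\gamma}[1+b^j\rho(y-x_P)]^{-N}$ is bounded uniformly in $j$ and $\gamma$ once $N>1$. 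The second technical step is an oscillation bound: applying the mean value theorem to $\eta$ inside the convolution $h_j=h_j\ast\eta_{-j}$ and using that $|A^j(y-y')|\ls(\lambda_-)^{-\gamma}$ whenever $y,\,y'$ lie in a common $P\in\mathcal{Q}_{j+\gamma}$ (which follows from $\rho(y-y')\ls b^{-j-\gamma}$ and \eqref{se19}), then the decay of $\eta$ and the same discretization, one would obtain, for every $P\in\mathcal{Q}_{j+\gamma}$,
$$\sup_{z\in P}|h_j(z)|\le\inf_{z\in P}|h_j(z)|+C(\lambda_-)^{-\gamma}\sum_{P'\in\mathcal{Q}_{j+\gamma}}\lf(\sup_{z\in P'}|h_j(z)|\r)\,b^{-\gamma}\lf[1+b^j\rho(x_P-x_{P'})\r]^{-N}.$$
The essential gain is the decaying factor $(\lambda_-)^{-\gamma}$.

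Next, for $Q\in\mathcal{Q}_j$ I would choose $P\in\mathcal{Q}_{j+\gamma}$ with $|P\cap Q|\neq0$ on which $|h_j|$ is within a factor $2$ of ${\rm sup}_Q(f)=\sup_{z\in Q}|h_j(z)|$; such a $P$ is admissible in the definition of ${\rm inf}_Q(f)$, so $\inf_{z\in P}|h_j(z)|\le{\rm inf}_Q(f)$, and, after the obvious re-indexing from $\mathcal{Q}_{j+\gamma}$ to $\mathcal{Q}_j$ (each $\sup_{z\in P'}|h_j(z)|$ being bounded by the sum of ${\rm sup}_{Q'}(f)$ over the boundedly many $Q'\in\mathcal{Q}_j$ meeting $P'$), the two displays combine to give, for all $Q\in\mathcal{Q}_j$,
$${\rm sup}_Q(f)\le{\rm inf}_Q(f)+C(\lambda_-)^{-\gamma}\sum_{Q'\in\mathcal{Q}_j}{\rm sup}_{Q'}(f)\lf[1+b^j\rho(x_Q-x_{Q'})\r]^{-N}.$$
Plugging this into the definition of $({\rm sup}_Q(f))^*_{r,\lambda}$, using the quasi-triangle inequality for $\ell^{\min\{1,r\}}$ and the standard composition estimate $\sum_{P\in\mathcal{Q}_j}[1+b^j\rho(x_Q-x_P)]^{-\lambda}[1+b^j\rho(x_P-x_{P'})]^{-N}\ls[1+b^j\rho(x_Q-x_{P'})]^{-\lambda}$ (valid once $N$ is large relative to $\lambda$, with no loss in the exponent $\lambda$ — the same bookkeeping as in the proof of Lemma \ref{fivel7}), one arrives at
$$\lf({\rm sup}_Q(f)\r)^*_{r,\lambda}\le C_0\lf({\rm inf}_Q(f)\r)^*_{r,\lambda}+C_0(\lambda_-)^{-\gamma}\lf({\rm sup}_Q(f)\r)^*_{r,\lambda}\qquad\text{for all }Q\in\widehat{\mathcal{Q}}.$$
Fixing $\gamma$ so large that $C_0(\lambda_-)^{-\gamma}\le1/2$ and absorbing the last term then gives $({\rm sup}_Q(f))^*_{r,\lambda}\le2C_0({\rm inf}_Q(f))^*_{r,\lambda}$; to legitimize the absorption one first reduces, by restricting all the majorant sums to finitely many cubes and then letting these collections exhaust $\widehat{\mathcal{Q}}$, to the case in which $({\rm sup}_Q(f))^*_{r,\lambda}<\fz$.

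The step I expect to be the main obstacle is the oscillation bound: one has to extract a genuinely decaying factor $(\lambda_-)^{-\gamma}$ from the non-isotropic scaling $\rho(A^jx)=b^j\rho(x)$ together with the comparison \eqref{se19} between $|A^jx|$ and powers of $\lambda_-$, while keeping the decay of the weights in the discretized form intact so that this gain is not re-absorbed; once this estimate is in place, the weight aggregation and the absorption are routine anisotropic versions of the argument in \cite{fjw91}.
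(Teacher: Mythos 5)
The paper offers no proof of Lemma \ref{fivel9}: it is quoted verbatim from \cite[Lemma 3.8]{lfy14} (see also \cite[p.\,423]{mb07}), which are anisotropic versions of the classical Frazier--Jawerth sup/inf comparison in \cite{fjw91}. Your plan reconstructs exactly that argument — the reproducing identity $h_j=h_j\ast\eta_{-j}$ from the spectral support, the discretized kernel bound over $\mathcal{Q}_{j+\gamma}$, the oscillation estimate producing the gain $(\lambda_-)^{-\gamma}$, the re-indexing from $\mathcal{Q}_{j+\gamma}$ to $\mathcal{Q}_j$, the weight composition estimate, and absorption — so it is the same approach the paper implicitly relies on, just written out rather than cited.

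The outline is correct. Two points deserve the extra care you anticipate. First, the gain is $(\lambda_-)^{-\gamma}$ rather than $b^{-\gamma}$ precisely because of the passage between the Euclidean norm (in which the mean-value theorem controls $|A^j(y-y')|$) and the quasi-norm $\rho$ (in which the decay of $\eta$ is phrased): for $x\in B_{k+1}\setminus B_k$ one has $\rho(x)=b^k$ while $|x|$ is comparable to a power of $\lambda_\pm$, and the conversion of the segment length $\ls(\lambda_-)^{-\gamma}$ back into a $\rho$-decay estimate must be done keeping only a bounded loss in the weight exponent — you have identified this correctly as the crux, and the step does go through with \eqref{se19}. Second, the absorption $C_0(\lambda_-)^{-\gamma}(\mathrm{sup}_Q(f))^*_{r,\lambda}\le\frac12(\mathrm{sup}_Q(f))^*_{r,\lambda}$ requires an a priori finiteness that is not automatic for general $f\in\cs'(\rn)$; your device of truncating the majorant sums to finite subcollections of $\widehat{\mathcal{Q}}$ and then letting them exhaust works, but the truncated version of the composition estimate must be stated so that the absorbed term is the \emph{same} truncated quantity appearing on the left (alternatively, one can show directly that $(\mathrm{sup}_Q(f))^*_{r,\lambda}=\fz$ forces $(\mathrm{inf}_Q(f))^*_{r,\lambda}=\fz$, making the inequality vacuous in that case). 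Neither point is a gap in the approach, only in the write-up, and both are handled in the cited references in essentially the way you describe.
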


Now we prove Theorem \ref{fivet2}.

\begin{proof}[Proof of Theorem \ref{fivet2}]
We first prove the necessity of Theorem \ref{fivet2}.
Let $p\in(0,1],\,q\in(0,\fz]$ and $f\in H_A^{p,q}(\rn)$.
By Proposition \ref{fivep1}, we know $f\in\cs'_0(\rn)$.
Furthermore, by \cite[Lemma 2.20]{lfy14} and repeating the proof
of the necessity of Theorem \ref{fivet1} with a slight
modification, we easily conclude that $g(f)\in L^{p,q}(\rn)$
and
$\lf\|g(f)\r\|_{L^{p,q}(\rn)}
\ls\|f\|_{H_A^{p,q}(\rn)}$.
This finishes the proof of the necessity of Theorem \ref{fivet2}.

Thus, to complete the proof of Theorem \ref{fivet2},
it remains to show the sufficiency of Theorem \ref{fivet2}.
To this end,
by Theorem \ref{fivet1}, we only need to prove that
\begin{align}\label{five43}
\lf\|S(f)\r\|_{L^{p,q}(\rn)}
\ls\lf\|g(f)\r\|_{L^{p,q}(\rn)}
\end{align}
for all $f\in\cs'_0(\rn)$ with $g(f)\in L^{p,q}(\rn)$.
We prove this by two steps.

\emph{Step 1.} In this step, we prove that,
for any $r\in(0,1]$, $j\in\mathbb{Z}$, $x\in\rn$
and $y\in x+B_{-j}$,
\begin{align}\label{five3}
\lf|f\ast\varphi_{-j}(y)\r|
&\ls\sum_{i\in\mathbb{Z}}b^
{-|j-i|[1+(s+1)\zeta_--\frac 1r]}\\
&\hs\times\lf\{\lf[M_{{\rm HL}}
\lf(\sum_{Q\in \mathcal{Q}_i}
\lf|\lf(f\ast\widetilde{\Phi}_{-i}\r)(x_Q)\r|^r
\chi_Q\r)\r](x)\r\}^{1/r},\noz
\end{align}
where $\varphi\in\cs(\rn)$
has the vanishing moments up to $\ell$ which
will be fixed later.

To this end,
let $\Phi,\,\Psi\in\cs(\rn)$ be as in
Lemma \ref{fivel5}. For any $M\in\mathbb{N}$,
$j\in\mathbb{Z},\,f\in\cs'_0(\rn),\,x\in\rn$ and
$y\in x+B_{-j}$, by Lemmas \ref{fivel5} and
\ref{fivel4}, we have
\begin{align*}
&\lf|f\ast\varphi_{-j}(y)\r|\\
&\hs\ls\sum_{i\in\mathbb{Z}}\sum_{Q\in
\mathcal{Q}_{i}}b^{-i}\lf|f\ast\widetilde
{\Phi}_{-i}(x_Q)\Psi_{-i}\ast\varphi_{-j}(y-x_Q)\r|\\
&\hs\ls\lf[\sum_{i\leq j}\sum_{Q\in\mathcal{Q}_{i}}
\frac{b^{-(j-i)(\ell+1)\zeta_-}}
{\lf[1+\rho(A^{i}(y-x_Q))\r]^M}+\sum_{i>j}\sum_{Q\in\mathcal{Q}_{i}}
\frac{b^{-(i-j)[1+(\ell+1)\zeta_-]}}
{\lf[1+\rho(A^j(y-x_Q))\r]^M}\r]
\lf|\lf(f\ast\widetilde{\Phi}_{-i}\r)(x_Q)\r|\\
&\hs=:C_{11}({\rm I}+{\rm II}),
\end{align*}
where $C_{11}$ is a positive constant depending on
$\ell$ and $M$, but independent of $j$, $f$ and $y$.

We first estimate ${\rm I}$. Assume that $i\leq j$.
For any $k\in\mathbb{Z}_+$,
$x\in\rn$ and $y\in x+B_{-j}$, when $k=0$, let
$U_0:=\{Q\in\mathcal{Q}_i:\ \rho(A^i(y-x_Q))\leq 1\}$
and, when $k\in\mathbb{N}$, let
$$U_k:=\{Q\in\mathcal{Q}_i:\ b^{k-1}
<\rho(A^i(y-x_Q))\leq b^k\}.$$
Then we have
\begin{align}\label{five44}
&\sum_{Q\in U_k}\frac{b^{-(j-i)
(\ell+1)\zeta_-}}{\lf[1+\rho(A^i(y-x_Q))\r]^M}
\lf|\lf(f\ast\widetilde{\Phi}_{-i}\r)(x_Q)\r|\\
&\hs\ls b^{-(j-i)(\ell+1)\zeta_--kM}\sum_{Q\in U_k}
\lf|\lf(f\ast\widetilde{\Phi}_{-i}\r)(x_Q)\r|\noz.
\end{align}

Notice that, for any $z\in Q\in U_k$, by Definition
\ref{sd2}(iii) and \eqref{five35}, we know that
\begin{align*}
\rho(z-y)\leq H\lf[\rho(z-x_Q)+\rho(x_Q-y)\r]
\leq H\lf(b^{j_0-i}+b^{k-i}\r)<2Hb^{j_0+k-i},
\end{align*}
where $j_0\in\mathbb{Z}_+$ is as in \eqref{five35},
which implies that
\begin{align}\label{five46}
\bigcup_{Q\in U_k}Q
\subset B_\rho(y,2Hb^{j_0+k-i}):=
\lf\{z\in\rn:\ \rho(y-z)<2Hb^{j_0+k-i}\r\}.
\end{align}
Moreover, noticing that $i\leq j$ and $k,\,j_0\in\mathbb{Z}_+$,
for all $x\in y+B_{-j}$, we have
$x\in B_\rho(y,2Hb^{j_0+k-i})$.
Therefore,
for all $r\in(0,1]$ and $x\in y+B_{-j}$,
by \eqref{five46}, we conclude that
\begin{align}\label{five47}
&\sum_{Q\in U_k}
\lf|\lf(f\ast\widetilde{\Phi}_{-i}\r)(x_Q)\r|\\
&\hs\ls b^{\frac{k}r}\lf\{\frac1{\lf|{B_\rho(y,2Hb^{j_0+k-i})}\r|}\int_
{B_\rho(y,2Hb^{j_0+k-i})}\sum_{Q\in U_k}
\lf|\lf(f\ast\widetilde{\Phi}_{-i}\r)(x_Q)\r|^r
\chi_Q(z)\,dz\r\}^{1/r}\noz\\
&\hs\ls b^{\frac{k}r}\lf\{\lf[
M_{{\rm HL}}\lf(\sum_{Q\in \mathcal{Q}_i}
\lf|\lf(f\ast\widetilde{\Phi}_{-i}\r)(x_Q)\r|^r
\chi_Q\r)\r](x)\r\}^{1/r}.\noz
\end{align}
We choose $M>1/r$. Then, by \eqref{five44}
and \eqref{five47}, we find that
\begin{align*}
{\rm I}&\ls\sum_{i\leq j}
\sum_{k=0}^{\fz}b^{-(j-i)(\ell+1)
\zeta_--k(M-\frac 1r)}
\lf\{\lf[M_{{\rm HL}}\lf(\sum_
{Q\in \mathcal{Q}_i}\lf|\lf(f\ast
\widetilde{\Phi}_{-i}\r)(x_Q)\r|^r
\chi_Q\r)\r](x)\r\}^{1/r}\\
&\sim\sum_{i\leq j}
b^{-(j-i)(\ell+1)\zeta_-}
\lf\{\lf[M_{{\rm HL}}\lf(\sum_
{Q\in \mathcal{Q}_i}\lf|\lf(f\ast
\widetilde{\Phi}_{-i}\r)(x_Q)\r|^r
\chi_Q\r)\r](x)\r\}^{1/r}.
\end{align*}

On the other hand, similar to the estimate of ${\rm I}$, by choosing
$M,\,\ell\in\mathbb{N}$ such that $M>1/r$ and
$1+(\ell+1)\zeta_--1/r>0$,
we also obtain
\begin{align*}
{\rm II}
&\ls\sum_{i>j}\sum_{k=0}^{\fz}
b^{-(i-j)[1+(\ell+1)\zeta_--\frac 1r]-
k(M-\frac 1r)}\\
&\hs\times\lf\{\lf[M_{{\rm HL}}\lf(\sum_{Q\in \mathcal{Q}_i}
\lf|\lf(f\ast\widetilde{\Phi}_{-i}\r)(x_Q)\r|^r
\chi_Q\r)\r](x)\r\}^{1/r}\\
&\sim\sum_{i>j}b^{-(i-j)
[1+(\ell+1)\zeta_--\frac 1r]}\lf\{\lf[
M_{{\rm HL}}
\lf(\sum_{Q\in \mathcal{Q}_i}
\lf|\lf(f\ast\widetilde{\Phi}_{-i}\r)(x_Q)\r|^r
\chi_Q\r)\r](x)\r\}^{1/r}.
\end{align*}

Combining the above estimates of ${\rm I}$
and ${\rm II}$, we further conclude that
\eqref{five3} holds true.

\emph{Step 2.} In this step,
we show \eqref{five43} via \eqref{five3}.
Indeed, by \eqref{five3}, we know that,
for all $x\in\rn$,
\begin{align}\label{five48}
\lf[S(f)(x)\r]^2
&=\sum_{j\in\zz}b^j\int_{x+B_{-j}}
\lf|f\ast\varphi_{-j}(y)\r|^2\,dy\\
&\ls\sum_{j\in\mathbb{Z}}
\Bigg\{\sum_{i\in\mathbb{Z}}
b^{-|j-i|[1+(\ell+1)\zeta_--\frac 1r]}
\noz\\
&\lf.\hs\times\lf\{\lf[M_{{\rm HL}}\lf(\sum_
{Q\in \mathcal{Q}_i}\lf|\lf(f\ast
\widetilde{\Phi}_{-i}\r)(x_Q)\r|^r
\chi_Q\r)\r](x)\r\}^{1/r}\r\}^2.\noz
\end{align}
Noticing that $1+(\ell+1)\zeta_--1/r>0$,
by the H\"{o}lder inequality, we have
\begin{align*}
&\sum_{i\in\mathbb{Z}}b^{-|j-i|[1+(\ell+1)
\zeta_--\frac 1r]}\lf\{\lf[M_{{\rm HL}}
\lf(\sum_{Q\in \mathcal{Q}_{i}}\lf|\lf(f\ast
\widetilde{\Phi}_{-i}\r)(x_Q)\r|^r\chi_Q\r)
\r](x)\r\}^{\frac1r}\\
&\hs\ls\lf\{\sum_{i\in\mathbb{Z}}b^{-|j-i|
[1+(\ell+1)\zeta_--\frac 1r]}\lf\{\lf[M_{{\rm HL}}
\lf(\sum_{Q\in \mathcal{Q}_i}\lf|\lf(f\ast
\widetilde{\Phi}_{-i}\r)(x_Q)\r|^r\chi_Q\r)
\r](x)\r\}^{\frac2r}\r\}^{\frac12},
\end{align*}
which, combined with \eqref{five48},
 further implies that
\begin{align}\label{five49}
S(f)(x)\ls\lf\{\sum_{i\in\mathbb{Z}}\lf\{\lf[
M_{{\rm HL}}\lf(\sum_{Q\in \mathcal{Q}_i}
\lf|\lf(f\ast\widetilde{\Phi}_{-i}\r)(x_Q)\r|^r
\chi_Q\r)\r](x)\r\}^{\frac2r}\r\}^{\frac12}.
\end{align}

Choose $M\in\mathbb{N}$ large enough such that
$r\in(1/M,p)$. Then, by \eqref{five49} and
Lemma \ref{fivel6}(i), we find that
\begin{align}\label{five50}
\ \ \ \ \lf\|S(f)\r\|_{L^{p,q}(\rn)}
&\ls\lf\|\lf\{\sum_{i\in\mathbb{Z}}\lf[M_{{\rm HL}}
\lf(\sum_{Q\in\mathcal{Q}_i}\lf|\lf(f\ast
\widetilde{\Phi}_{-i}\r)(x_Q)\r|^r\chi_Q\r)\r]
^{\frac2r}\r\}^{\frac12}\r\|_{L^{p,q}(\rn)}\\
&\hs\ls\lf\|\lf\{\sum_{i\in\mathbb{Z}}\sum_
{Q\in \mathcal{Q}_i}\lf[\lf|\lf(f\ast
\widetilde{\Phi}_{-i}\r)(x_Q)\r|\chi_Q\r]^2\r\}
^{\frac12}\r\|_{L^{p,q}(\rn)}.\noz
\end{align}
Recall that $s_Q\leq(s^*_{r,\,\lambda})_Q$
for any sequence
$\{s_Q\}_{Q\in\widehat{Q}}\subset\mathbb{C}$,
and $r,\,\lambda\in(0,\fz)$. From this,
\eqref{five50}, Lemmas \ref{fivel9} and
\ref{fivel8} with $r\in(0,\fz),\,\lambda\in
(\max\{1, r/2, r/p\},\fz)$,
we deduce that, for some
$\gamma\in\mathbb{N}$ large enough as in
Lemma \ref{fivel9},
\begin{align}\label{five51}
\lf\|S(f)\r\|_{L^{p,q}(\rn)}
&\ls\lf\|\lf\{\sum_{Q\in\widehat{\mathcal{Q}}}
\lf[({\rm sup}_{Q}(f))^*_{r,\,\lambda}\r]^2
\chi_Q\r\}^{\frac12}\r\|_{L^{p,q}(\rn)}\\
&\ls\lf\|\lf\{\sum_{Q\in\widehat{\mathcal{Q}}}
\lf[({\rm inf}_{Q}(f))^*_{r,\,\lambda}\r]^2
\chi_Q\r\}^{\frac12}\r\|_{L^{p,q}(\rn)}\noz\\
&\ls\lf\|\lf\{\sum_{Q\in\widehat{\mathcal{Q}}}
\lf[{\rm inf}_{Q}(f)\r]^2\chi_Q\r\}^{\frac12}\r\|
_{L^{p,q}(\rn)}.\noz
\end{align}
Moreover, for any $P\in\widehat{\mathcal{Q}}$
with $|P|=b^{-i}$ and
$s_P:=\inf_{y\in P}|f\ast
\widetilde{\Phi}_{i-\gamma}(y)|$,
it follows from \cite[p.\,306]{lfy14} that
$\inf_Q(f)=\sup\{s_P: P\in\widehat{\mathcal{Q}},
\,P\cap Q\neq\emptyset,\,|Q|/|P|=b^\gamma\}$ and,
for all $x\in\rn$,
$$\sum_{|Q|=b^{-j}}{\rm inf}_Q(f)\chi_Q(x)\ls
b^{\frac{\gamma\lambda}r}\sum_{|P|=b^
{-j-\gamma}}(s^*_{r,\,\lambda})_P\chi_P(x)$$
(see also the proof of
\cite[Lemma 8.4]{mb07} for more details). By this, \eqref{five51} and Lemma \ref{fivel8}, we have
\begin{align*}
\lf\|S(f)\r\|_{L^{p,q}(\rn)}
&\ls\lf\|\lf[\sum_{j\in\mathbb{Z}}
\sum_{|P|=b^{-j-\gamma}}
\lf|(s^*_{r,\,\lambda})_P\r|^2
\chi_P\r]^{\frac12}\r\|_{L^{p,q}(\rn)}\\
&\ls\lf\|\lf[\sum_{i\in\mathbb{Z}}
\sum_{|P|=b^{-i}}\lf|s_P\r|^2\chi_P\r]^
{\frac12}\r\|_{L^{p,q}(\rn)}\ls\lf\|g(f)\r\|
_{L^{p,q}(\rn)},\noz
\end{align*}
which is \eqref{five43}.
This finishes the proof of the sufficiency of Theorem \ref{fivet2}
and hence Theorem \ref{fivet2}.
\end{proof}

To prove Theorem \ref{fivet3}, we first
recall some notation from \cite{lfy14}. For each
open subset $E\subset\rn$ and $k_0\in\nn$,
let \begin{align}\label{five52}
U_{k_0}:=\lf\{x\in\rn:\ M_{{\rm HL}}
(\chi_E)(x)>b^{-2\tau-k_0}\r\},
\end{align}
where $M_{{\rm HL}}$ denotes the Hardy-Littlewood
maximal function defined by \eqref{te58}
and $\tau\in\mathbb{Z}_+$ is as in \eqref{se1}.
For any $\varphi\in\cs(\rn)$, $k_0\in\zz_+$, $f\in\cs'(\rn)$
and $x\in\rn$, let
$$S_{k_0}(f)(x):=\lf\{\sum_{k\in\mathbb{Z}}
b^{-(k+k_0)}\int_{x+B_{k+k_0}}
\lf|f\ast\varphi_k(y)\r|^2\,dy\r\}^{\frac12},$$
which is a variant of the anisotropic Lusin-area
function $S$ defined by \eqref{five1}.
Obviously, $S=S_0$.

The proof of the following Lemma \ref{fivel10} is
similar to that of \cite[Lemma 3.12]{lfy14},
the details being omitted.

\begin{lemma}\label{fivel10}
There exists a positive
constant $C$ such that, for all $k_0\in\mathbb{N}$ and
$f\in\cs'(\rn)$,
\begin{align*}
\int_{(U_{k_0})^\complement}\lf[S_{k_0}(f)(x)\r]^2\,dx
\leq C\int_{E^\complement}\lf[S(f)(x)\r]^2\,dx,
\end{align*}
where $E\subset\rn$ is an
open set and $U_{k_0}$
is as in \eqref{five52}.
\end{lemma}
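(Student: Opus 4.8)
The plan is to reduce the claimed inequality, via Tonelli's theorem, to a pointwise comparison between two ``tent masses'', and then to prove that comparison by a geometric containment together with the weak-type control supplied by $M_{{\rm HL}}$.

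First I would rewrite both integrals by Tonelli's theorem. Recall that the dilated balls are symmetric about the origin (the underlying ellipsoid $\Delta$ of \cite[p.\,5, Lemma 2.2]{mb03} has the form $\{x:\ |Px|<1\}$), so $B_k=-B_k$ and hence $y\in x+B_k$ is equivalent to $x\in y+B_k$. As $|f\ast\varphi_k(\cdot)|^2$ is a nonnegative measurable function for each $k\in\zz$, Tonelli's theorem gives
$$\int_{(U_{k_0})^\complement}\lf[S_{k_0}(f)(x)\r]^2\,dx
=\sum_{k\in\zz}b^{-(k+k_0)}\int_{\rn}\lf|f\ast\varphi_k(y)\r|^2\,\lf|(y+B_{k+k_0})\cap(U_{k_0})^\complement\r|\,dy,$$
and likewise writes $\int_{E^\complement}\lf[S(f)(x)\r]^2\,dx$ as $\sum_{k\in\zz}b^{-k}\int_{\rn}|f\ast\varphi_k(y)|^2|(y+B_k)\cap E^\complement|\,dy$. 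Hence it suffices to establish, for all $k\in\zz$ and $y\in\rn$, the pointwise inequality
$$b^{-(k+k_0)}\lf|(y+B_{k+k_0})\cap(U_{k_0})^\complement\r|
\le 2\,b^{-k}\lf|(y+B_k)\cap E^\complement\r|.$$

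To prove this last inequality I would assume its left-hand side is positive (otherwise it is trivial) and pick $x_0\in(y+B_{k+k_0})\cap(U_{k_0})^\complement$. Then $y\in x_0+B_{k+k_0}$, and since $k_0\in\nn$ forces $B_k\subset B_{k+k_0}$, the inclusion \eqref{se1} yields
$$y+B_k\subset x_0+B_{k+k_0}+B_{k+k_0}\subset x_0+B_{k+k_0+\tau}=:\wz{B},$$
a dilated ball containing $x_0$ with $|\wz{B}|=b^{k+k_0+\tau}$. Since $x_0\notin U_{k_0}$, the definitions \eqref{five52} and \eqref{te58} give $|E\cap\wz{B}|\le M_{{\rm HL}}(\chi_E)(x_0)\,|\wz{B}|\le b^{-2\tau-k_0}\,b^{k+k_0+\tau}=b^{k-\tau}$. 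Comparing Lebesgue measures in $2B_k\subset B_k+B_k\subset B_{k+\tau}$ (again \eqref{se1}) shows $2^n b^k\le b^{k+\tau}$, whence $b^{-\tau}\le 2^{-n}\le\frac12$; therefore $|E\cap(y+B_k)|\le|E\cap\wz{B}|\le b^{k-\tau}\le\frac12 b^k=\frac12|B_k|$, so $|(y+B_k)\cap E^\complement|\ge\frac12 b^k$. Together with the trivial bound $|(y+B_{k+k_0})\cap(U_{k_0})^\complement|\le|B_{k+k_0}|=b^{k+k_0}$, this gives the pointwise inequality.

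Finally, multiplying that pointwise inequality by $|f\ast\varphi_k(y)|^2$, integrating over $y\in\rn$, summing over $k\in\zz$, and reading the two Tonelli identities backwards, I obtain $\int_{(U_{k_0})^\complement}[S_{k_0}(f)(x)]^2\,dx\le 2\int_{E^\complement}[S(f)(x)]^2\,dx$, so the lemma holds with $C=2$, uniformly in $k_0\in\nn$ and $f\in\cs'(\rn)$. I expect no genuine obstacle here: the only steps requiring care are the use of Tonelli's theorem (legitimate by nonnegativity, with both sides permitted to be $+\infty$) and the geometric containment $y+B_k\subset\wz{B}$ combined with the maximal-function bound for $|E\cap\wz{B}|$. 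This is precisely the anisotropic analogue of the classical change-of-aperture estimate for the Lusin-area function and parallels the proof of \cite[Lemma 3.12]{lfy14} referred to above.
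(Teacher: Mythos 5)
Your proof is correct, and it is the standard change-of-aperture argument (Tonelli plus a pointwise comparison of ``tent masses'' via the enlarged set $U_{k_0}$), which is what the paper defers to via the citation to \cite[Lemma 3.12]{lfy14}. The key steps all check out: the symmetry of $B_k$ justifies the rewriting under Tonelli; for $x_0\in(y+B_{k+k_0})\cap(U_{k_0})^\complement$ the chain $y+B_k\subset x_0+B_{k+k_0}+B_{k+k_0}\subset x_0+B_{k+k_0+\tau}=\wz{B}$ with $x_0\in\wz{B}$ gives $|E\cap(y+B_k)|\le b^{-2\tau-k_0}|\wz{B}|=b^{k-\tau}\le\tfrac12 b^k$ (since $2^n b^k=|2B_k|\le|B_{k+\tau}|=b^{k+\tau}$), so $|(y+B_k)\cap E^\complement|\ge\tfrac12 b^k$, while trivially $b^{-(k+k_0)}|(y+B_{k+k_0})\cap(U_{k_0})^\complement|\le1$; this yields the pointwise bound and hence the lemma with $C=2$, uniformly in $k_0$.
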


The following technical lemma plays a key role in
proving Theorem \ref{fivet3}, whose proof
is motivated by Folland and Stein
\cite[Theorem 7.1]{fs82}, Aguilera and
Segovia \cite[Theorem 1]{as77}, Liang et al.
\cite[Lemma 4.6]{lhy12} and Li et al.
\cite[Lemma 3.13]{lfy14}.

\begin{lemma}\label{fivel11}
Let $p\in(0,1]$ and $q\in(0,\fz]$. Then there exists
a positive constant $C$ such that,
for all $k_0\in\mathbb{N}$ and $f\in L^{p,q}(\rn)$,
$$\lf\|S_{k_0}(f)\r\|_{L^{p,q}(\rn)}\leq Cb^
{(\frac1p-\frac12)k_0}\lf\|S(f)\r\|_{L^{p,q}(\rn)}.$$
\end{lemma}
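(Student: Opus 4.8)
The plan is to compare $S_{k_0}(f)$ with $S(f)$ through a level-set (good-$\lambda$ type) decomposition in the Lorentz quasi-norm, exploiting Lemma \ref{fivel10} as the key analytic input. Since $p\in(0,1]$ and $q\in(0,\fz]$, by \eqref{se6} and \eqref{se7} it suffices to estimate, for each $k\in\zz$, the distribution function $d_{S_{k_0}(f)}(2^k)$ in terms of the distribution function of $S(f)$, and then sum (or take supremum) over $k$. For a fixed $k\in\zz$, I would set $E_k:=\{x\in\rn:\ S(f)(x)>2^k\}$, which is open since $S(f)$ is lower semi-continuous, and let $U_{k_0}=U_{k_0}(E_k)$ be as in \eqref{five52}. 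The weak-type $(1,1)$ boundedness of $M_{{\rm HL}}$ (Proposition \ref{tl4} via Remark \ref{tr1}) gives $|U_{k_0}|\le C b^{2\tau+k_0}|E_k|$, and on $(U_{k_0})^\complement$ Lemma \ref{fivel10} yields $\int_{(U_{k_0})^\complement}[S_{k_0}(f)(x)]^2\,dx\le C\int_{E_k^\complement}[S(f)(x)]^2\,dx$.

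Next I would split $\{S_{k_0}(f)>2^k\gamma\}$ (for a suitable normalizing factor $\gamma$ depending on $k_0$ to be chosen) into its intersection with $U_{k_0}$ and with $(U_{k_0})^\complement$. The first piece is controlled by $|U_{k_0}|\le C b^{2\tau+k_0}|E_k|$. For the second piece, Chebyshev's inequality together with Lemma \ref{fivel10} gives
\begin{align*}
\lf|\lf\{x\in(U_{k_0})^\complement:\ S_{k_0}(f)(x)>2^k\gamma\r\}\r|
\le (2^k\gamma)^{-2}\int_{(U_{k_0})^\complement}[S_{k_0}(f)(x)]^2\,dx
\le C(2^k\gamma)^{-2}\int_{E_k^\complement}[S(f)(x)]^2\,dx.
\end{align*}
The integral on the right is a standard tail integral: $\int_{E_k^\complement}[S(f)]^2\,dx=\int_0^{2^k}2\az\,d_{S(f)}(\az)\,d\az+(2^k)^2|E_k^\complement\setminus\{S(f)=0\}|$ truncated appropriately, and in all cases it is bounded by $C\sum_{\ell\le k}2^{2\ell}|E_\ell|$ (using the layer-cake representation against the dyadic level sets $E_\ell$). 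Choosing $\gamma:=b^{(1/2)k_0}$ (so that the factor $(2^k\gamma)^{-2}$ carries a $b^{-k_0}$), both pieces then combine to give
$$d_{S_{k_0}(f)}(2^k b^{k_0/2})\ls b^{k_0}\,|E_k|+b^{-k_0}\sum_{\ell\le k}2^{2(\ell-k)}|E_\ell|.$$

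Finally I would insert this into the $L^{p,q}$-quasi-norm formula \eqref{se6}–\eqref{se7}. Using homogeneity of the quasi-norm under dilation of the argument ($\|b^{k_0/2}g\|_{L^{p,q}}=b^{k_0/2}\|g\|_{L^{p,q}}$, hence $\|S_{k_0}(f)\|_{L^{p,q}}=b^{-k_0/2}\|b^{k_0/2}S_{k_0}(f)\|_{L^{p,q}}$) and raising to the $q$-th power, the first term contributes $\ls b^{k_0(1-?)}$ times $\sum_k[2^k|E_k|^{1/p}]^q$ after the substitution, and the second term is handled by a discrete Hardy-type inequality in $\ell^{q/p}$ (summing the geometric factor $2^{2(\ell-k)\cdot(\text{small})}$), again reproducing $\sum_k[2^k|E_k|^{1/p}]^q\sim\|S(f)\|_{L^{p,q}}^q$. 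Tracking the powers of $b^{k_0}$: the first term gives exponent $k_0(\frac1p-\frac12)$ after the $b^{-k_0/2}$ normalization since $|E_k|^{q/p}$ picks up $b^{k_0 q/p}$ against the $b^{k_0}$ from $|U_{k_0}|$... I must be careful here — the cleanest bookkeeping is: $\|S_{k_0}(f)\|_{L^{p,q}}\le\|b^{k_0/2}\cdot(\text{piece on }(U_{k_0})^c)\|+\|(\text{piece on }U_{k_0})\|$ and the $U_{k_0}$-piece is $\ls\big(\sum_k 2^{kq}|U_{k_0}(E_k)|^{q/p}\big)^{1/q}\ls b^{(2\tau+k_0)/p}\|S(f)\|_{L^{p,q}}$, which is exactly of the claimed form $b^{(\frac1p-\frac12)k_0}$ up to replacing with the larger exponent $\frac1p$ — so to get the sharp $\frac1p-\frac12$ one instead keeps the truncation level at $2^k$ (not $2^k b^{k_0/2}$) on the bad set and only uses the $b^{k_0/2}$-shift on the good set via Lemma \ref{fivel10}, whose $L^2$-gain supplies precisely the missing $b^{-k_0/2}$. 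The main obstacle is this bookkeeping of the power of $b^{k_0}$: one must balance the $b^{k_0}$ loss from the measure of $U_{k_0}$ against the $b^{-k_0}$ gain from Chebyshev plus Lemma \ref{fivel10}, and the case $q=\fz$ must be checked separately but proceeds identically with suprema replacing sums and the discrete Hardy inequality replaced by a geometric-series supremum estimate.
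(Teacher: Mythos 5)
Your high-level strategy coincides with the paper's: for each level $\lambda$, split $\{S_{k_0}(f)>\lambda\}$ across $U_{k_0}$ and its complement, bound the bad part via the weak-type $(1,1)$ estimate for $M_{\mathrm{HL}}$, bound the good part via Chebyshev together with Lemma~\ref{fivel10}, and then sum over dyadic levels in the $L^{p,q}$ quasi-norm. This matches the paper (which defines $E_{\lambda,k_0}:=\{S(f)>\lambda b^{k_0/2}\}$ and $U_{\lambda,k_0}$ accordingly and then computes in three cases). So the architecture is fine.

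However, there is a genuine sign error in your choice of the shift $\gamma$, and your attempted repair does not close it. You set $E_k:=\{S(f)>2^k\}$ and estimate $d_{S_{k_0}(f)}(2^k\gamma)$ with $\gamma:=b^{k_0/2}$. After re-indexing to compute $\|S_{k_0}(f)\|_{L^{p,q}}^q\sim\sum_k (2^k b^{k_0/2})^q\bigl[d_{S_{k_0}(f)}(2^k b^{k_0/2})\bigr]^{q/p}$, the $U_{k_0}$-piece contributes $\sum_k 2^{kq}b^{k_0q/2}(b^{k_0}|E_k|)^{q/p}=b^{k_0q(\frac1p+\frac12)}\|S(f)\|_{L^{p,q}}^q$, which is worse than the target by a full factor $b^{k_0q}$. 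Your own bookkeeping check later confirms this (you get $b^{k_0/p}$, already larger than $b^{k_0(\frac1p-\frac12)}$). The correct choice is $\gamma=b^{-k_0/2}$: then $\|S_{k_0}(f)\|_{L^{p,q}}^q\sim\sum_k 2^{kq}b^{-k_0q/2}\bigl[d_{S_{k_0}(f)}(2^kb^{-k_0/2})\bigr]^{q/p}$, the bad piece gives $b^{-k_0q/2}b^{k_0q/p}=b^{k_0q(\frac1p-\frac12)}$, and the Chebyshev piece (now carrying $(2^kb^{-k_0/2})^{-2}=2^{-2k}b^{k_0}$) yields the same power after the discrete Hardy-type summation; equivalently, keep the threshold of $S_{k_0}(f)$ at $\lambda$ and define the reference level set of $S(f)$ at the higher threshold $\lambda b^{k_0/2}$, which is precisely the paper's $E_{\lambda,k_0}$. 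Your proposed fix invoking an ``$L^2$-gain'' from Lemma~\ref{fivel10} is not available: that lemma gives $\int_{(U_{k_0})^\complement}[S_{k_0}(f)]^2\le C\int_{E^\complement}[S(f)]^2$ with a constant uniform in $k_0$; it supplies no extra factor of $b^{-k_0/2}$. The missing idea is therefore the shift in the definition of the reference level set, not any gain in the comparison lemma.
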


\begin{proof}
For any $k_0\in\mathbb{N}$, $\lambda\in(0,\fz)$
and $f\in L^{p,q}(\rn)$,
let
$$E_{\lambda,k_0}:=\lf\{x\in\rn:\ S(f)(x)
>\lambda b^{k_0/2}\r\}$$
and
$$U_{\lambda,k_0}:=\lf\{x\in\rn:\ M_{{\rm HL}}(\chi_
{E_{\lambda,k_0}})(x)>b^{-2\tau-k_0}\r\},$$
where $M_{{\rm HL}}$ is as in \eqref{te58}.
Then, by the boundedness
from $L^1(\rn)$ to $L^{1,\fz}(\rn)$ of $M_{{\rm HL}}$
(see Proposition \ref{tl4} and Remark \ref{tr1}),
we have
$$|U_{\lambda,k_0}|\ls b^{k_0}
\|\chi_{E_{\lambda,k_0}}\|_{L^1(\rn)}
\sim b^{k_0}|E_{\lambda,k_0}|.$$
From this, together with Lemma \ref{fivel10} with
$E=E_{\lambda,k_0}$ and $U_{k_0}=U_{\lambda,k_0}$,
we deduce that
\begin{align}\label{five53}
&\lf|\lf\{x\in\rn: S_{k_0}(f)>\lambda\r\}\r|\\
&\hs\leq\lf|U_{\lambda,k_0}\r|+
\lf|(U_{\lambda,k_0})^\complement\cap
\lf\{x\in\rn:\ S_{k_0}(f)(x)>\lambda\r\}\r|\noz\\
&\hs\ls b^{k_0}\lf|E_{\lambda,k_0}\r|+
\lambda^{-2}\int_{(U_{\lambda,k_0})^
\complement}\lf[S_{k_0}(f)(x)\r]^2\,dx\noz\\
&\hs\ls b^{k_0}\lf|E_{\lambda,k_0}\r|+
\lambda^{-2}\int_{(E_{\lambda,k_0})^
\complement}\lf[S(f)(x)\r]^2\,dx\noz\\
&\hs\sim b^{k_0}\lf|E_{\lambda,k_0}\r|+
\lambda^{-2}\int_0^{\lambda b^{k_0/2}}
\mu\lf|\lf\{x\in\rn:\ S(f)(x)>\mu\r\}\r|\,d\mu.\noz
\end{align}
Moreover, for all $\ell\in\mathbb{Z}$
and $k_0\in\mathbb{N}$, by \eqref{five53}, we have
\begin{align}\label{five55}
&\lf|\lf\{x\in\rn:\ S_{k_0}(f)(x)>2^\ell\r\}\r|\\
&\hs\ls b^{k_0}\lf|E_{2^\ell,k_0}\r|+2^{-2\ell}
\int_0^{2^\ell b^{k_0/2}}\mu\lf|\lf\{x\in\rn:\
S(f)(x)>\mu\r\}\r|\,d\mu\noz\\
&\hs\ls b^{k_0}\lf|E_{2^\ell,k_0}\r|+2^{-2\ell}
\sum_{m=-\fz}^{m_\ell}\int_{2^{m-1}}^{2^m}
\mu\lf|\lf\{x\in\rn:\ S(f)(x)>\mu\r\}\r|\,d\mu\noz\\
&\hs\ls b^{k_0}\lf|E_{2^\ell,k_0}\r|+2^{-2\ell}
\sum_{m=-\fz}^{m_\ell}2^{2(m-1)}\lf|\lf\{x\in\rn:\
 S(f)(x)>2^{m-1}\r\}\r|,\noz
\end{align}
where
$m_\ell:=\ell+\lfloor\frac{k_0}2\log_2b\rfloor+1$.
Next we show the desired conclusion by considering
three cases: $q/p\in(1,\fz)$, $q/p\in(0,1]$ and $q=\fz$.

\emph{Case 1:} $q/p\in(1,\fz)$.
For this case, by \eqref{se6},
\eqref{five53}, the definition of $E_{\lambda,k_0}$
and the H\"{o}lder inequality, we conclude that
\begin{align}\label{five54}
\lf\|S_{k_0}(f)\r\|^q_{L^{p,q}(\rn)}
&\sim\int_0^\fz\lambda^{q-1}\lf|\lf\{x\in\rn:\ S_
{k_0}(f)(x)>\lambda\r\}\r|^{\frac qp}\,d\lambda\\
&\ls\int_0^\fz b^{\frac qpk_0}\lambda^{q-1}
\lf|\lf\{x\in\rn:\ S(f)(x)>\lambda b^{k_0/2}\r\}\r|^
{\frac qp}\,d\lambda\noz\\
&\hs+\int_0^\fz\lambda^{q-\frac{2q}p-1}\lf[\int_
0^{\lambda b^{k_0/2}}\mu\lf|\lf\{x\in\rn:\ S(f)(x)
>\mu\r\}\r|\,d\mu\r]^{\frac qp}\,d\lambda\noz\\
&\ls\int_0^\fz b^{(\frac1p-\frac12)qk_0}\lambda^
{q-1}\lf|\lf\{x\in\rn:\ S(f)(x)>\lambda\r\}\r|^{\frac qp}
\,d\lambda\noz\\
&\hs+\int_0^\fz\lambda^{q-\frac{2q}p-1}\lf[\int_
0^{\lambda b^{k_0/2}}\mu^{(1-p)\frac q{q-p}}
\,d\mu\r]^{\frac qp-1}\noz\\
&\hs\times\int_0^{\lambda b^{k_0/2}}\mu^q\lf|\lf
\{x\in\rn:\ S(f)(x)>\mu\r\}\r|^{\frac qp}\,d\mu
\,d\lambda\noz\\
&\sim b^{(\frac1p-\frac12)qk_0}\lf\|S(f)\r\|^q_{L^
{p,q}(\rn)}+b^{(\frac1p-\frac12)qk_0-\frac {k_0}2}\noz\\
&\hs\times\int_0^\fz\lambda^{-2}\int_0^{\lambda
b^{k_0/2}}\mu^q\lf|\lf\{x\in\rn:\ S(f)(x)>\mu\r\}\r|^
{\frac qp}\,d\mu\,d\lambda\noz\\
&\ls b^{(\frac1p-\frac12)qk_0}\lf\|S(f)\r\|^q_{L^
{p,q}(\rn)}+b^{(\frac1p-\frac12)qk_0}\noz\\
&\hs\times\int_0^\fz\mu^{q-1}\lf|\lf\{x\in\rn:\ S(f)(x)
>\mu\r\}\r|^{\frac qp}\,d\mu\noz\\
&\sim b^{(\frac1p-\frac12)qk_0}\lf\|S(f)\r\|^
q_{L^{p,q}(\rn)}\noz.
\end{align}

\emph{Case 2:}
$q/p\in(0,1]$. For this case, by \eqref{se6}, \eqref{five55}
and the definition of $E_{\lambda,k_0}$, we find that
\begin{align}\label{five56}
\lf\|S_{k_0}(f)\r\|^q_{L^{p,q}(\rn)}
&\sim\sum_{\ell\in\mathbb{Z}}2^{\ell q}\lf|\lf\{
x\in\rn:\ S_{k_0}(f)(x)>2^{\ell}\r\}\r|^{\frac qp}\\
&\ls\sum_{\ell\in\mathbb{Z}}2^{\ell q}
b^{\frac qpk_0}\lf|\lf\{x\in\rn:\ S(f)(x)>2^{\ell}
b^{k_0/2}\r\}\r|^{\frac qp}\noz\\
&\hs+\sum_{\ell\in\mathbb{Z}}2^{\ell q}2^{-\frac
{2q}p\ell}\sum_{m=-\fz}^{m_\ell}2^{2(m-1)\frac qp}
\lf|\lf\{x\in\rn:\ S(f)(x)>2^{m-1}\r\}\r|^{\frac qp}\noz\\
&\ls b^{(\frac1p-\frac12)qk_0}\lf\|S(f)\r\|^
q_{L^{p,q}(\rn)}+\sum_{\ell\in\mathbb{Z}}2^{-\ell q}
b^{(\frac1p-1)qk_0}\noz\\
&\hs\times\sum_{m=-\fz}^{m_\ell}2^{2q(m-1)}
\lf|\lf\{x\in\rn:\ S(f)(x)>2^{m-1}\r\}\r|^{\frac qp}\noz\\
&\sim b^{(\frac1p-\frac12)qk_0}\lf\|S(f)\r\|^q_{L^{p,q}(\rn)}
+\sum_{m\in\mathbb{Z}}2^{-\ell q}b^{(\frac1p-1)qk_0}\noz\\
&\hs\times\sum_{\ell=\ell_m}^\fz2^{2q(m-1)}\lf|\lf
\{x\in\rn:\ S(f)(x)>2^{m-1}\r\}\r|^{\frac qp}\noz\\
&\ls b^{(\frac1p-\frac12)qk_0}\lf\|S(f)\r\|^q_{L^{p,q}(\rn)}
+b^{(\frac1p-\frac12)qk_0}\noz\\
&\hs\times\sum_{m\in\mathbb{Z}}2^{q(m-1)}\lf|\lf
\{x\in\rn:\ S(f)(x)>2^{m-1}\r\}\r|^{\frac qp}\noz\\
&\sim b^{(\frac1p-\frac12)qk_0}\lf\|S(f)\r\|
^q_{L^{p,q}(\rn)},\noz
\end{align}
where $\ell_m:=m-\lfloor\frac{k_0}2\log_2b\rfloor-1$.

\emph{Case 3:}
For the last case when $q=\fz$, by \eqref{se7},
\eqref{five53} and the definition of $E_{\lambda,k_0}$ again,
we know that, for any $\lambda\in(0,\fz)$,
\begin{align}\label{five57}
&\lambda\lf|\lf\{x\in\rn:\ S_{k_0}(f)(x)>\lambda\r\}\r|
^{\frac1p}\\
&\hs\ls b^{\frac{k_0}p}\lambda\lf|\lf\{x\in\rn:\ S(f)(x)
>b^{\frac{k_0}2}\lambda\r\}\r|^{\frac1p}\noz\\
&\hs\hs+\lambda^{1-\frac2p}\lf\{\int_0^{\lambda b^{k_
0/2}}\mu\lf|\lf\{x\in\rn:\ S(f)(x)>\mu\r\}\r|\,d\mu\r\}
^{\frac1p}\noz\\
&\hs\ls\lf\|S(f)\r\|_{L^{p,\fz}(\rn)}\lf[b^{(\frac1p-
\frac12)k_0}
+\lambda^{1-\frac2p}\lf(\int_0^{\lambda b
^{k_0/2}}\mu^{1-p}\,d\mu\r)^{\frac1p}\r]\noz\\
&\hs\sim b^{(\frac1p-\frac12)k_0}\lf\|S(f)\r\|
_{L^{p,\fz}(\rn)}.\noz
\end{align}
Combining \eqref{five54}, \eqref{five56} and
\eqref{five57}, we complete the proof of Lemma \ref{fivel11}.
\end{proof}

Now we prove Theorem \ref{fivet3}.

\begin{proof}[Proof of Theorem \ref{fivet3}]
Let $p\in(0,1],\,q\in(0,\fz]$ and $\lambda\in(2/p,\fz)$.
Notice that, for all $f\in\cs'_0(\rn)$ with
$g_\lambda^*(f)\in L^{p,q}(\rn)$,
$S(f)\ls g_\lambda^*(f)$. Then, by Theorem \ref{fivet1},
we know that
$$\lf\|f\r\|_{H_A^{p,q}(\rn)}
\ls\|g_\lambda^*\|_{L^{p,q}(\rn)}.$$

Conversely, for all $f\in H_A^{p,q}(\rn)$, by Proposition
\ref{fivep1}, we know $f\in\cs'_0(\rn)$. Thus, to complete the proof
of Theorem \ref{fivet3},
by Theorem \ref{fivet1} again, we only need to show that,
for all $f\in H_A^{p,q}(\rn)$,
\begin{align}\label{five58}
\lf\|g_\lambda^*(f)\r\|_{L^{p,q}(\rn)}
\ls\lf\|S(f)\r\|_{L^{p,q}(\rn)}.
\end{align}
Indeed, for all $x\in\rn$,
\begin{align*}
\lf[g_\lambda^*(f)(x)\r]^2
&=\sum_{k\in\mathbb{Z}}b^{-k}
\int_{x+B_k}\lf[\frac{b^k}{b^k+\rho(x-y)}\r]
^\lambda\lf|f\ast\varphi_{k}(y)\r|^2\,dy\\
&\hs+\sum_{m=1}^\fz\sum_{k\in\mathbb{Z}}
b^{-k}\int_{x+(B_{k+m}\setminus B_{k+m-1})}
\cdots\\
&\ls\lf[S(f)(x)\r]^2+\sum_{m=1}^\fz
b^{-m(\lambda-1)}\lf[S_m(f)(x)\r]^2.
\end{align*}
From this, the Aoki-Rolewicz theorem (see \cite{ta42, sr57}),
Lemma \ref{fivel11} and $\lambda\in(2/p,\fz)$,
we further deduce that there exists $\upsilon\in(0,1]$
such that
\begin{align*}
\lf\|g_\lambda^*(f)\r\|^{\upsilon}_{L^{p,q}(\rn)}
&\ls\sum_{m=0}^\fz b^{-m\upsilon(\lambda-1)/2}
\lf\|S_m(f)\r\|^{\upsilon}_{L^{p,q}(\rn)}\\
&\ls\sum_{m=0}^\fz b^{-m\upsilon(\lambda-1)/2}
b^{(\frac1p-\frac12)m\upsilon}\lf\|S(f)\r\|^{\upsilon}
_{L^{p,q}(\rn)}\ls\lf\|S(f)\r\|^{\upsilon}_{L^{p,q}(\rn)},
\end{align*}
which implies that
$\lf\|g_\lambda^*(f)\r\|_{L^{p,q}(\rn)}
\ls\lf\|S(f)\r\|_{L^{p,q}(\rn)}$.
This finishes the proof of \eqref{five58}
and hence Theorem \ref{fivet3}.
\end{proof}

\textbf{Acknowledgements.} The authors would like to express their
deep thanks to Professor Qixiang Yang for several useful conversations
on Lemma \ref{fivel6} which is an important tool of this article
and is of independent interest.

\bigskip

\noindent  Jun Liu, Dachun Yang (Corresponding author) and Wen Yuan

\medskip

\noindent  School of Mathematical Sciences, Beijing Normal University,
Laboratory of Mathematics and Complex Systems, Ministry of
Education, Beijing 100875, People's Republic of China

\smallskip

\noindent {\it E-mails}: \texttt{junliu@mail.bnu.edu.cn} (J. Liu)

\hspace{1.12cm}\texttt{dcyang@bnu.edu.cn} (D. Yang)

\hspace{1.12cm}\texttt{wenyuan@bnu.edu.cn} (W. Yuan)

\end{document}